\numberwithin{equation}{section}
\newtheorem{theorem}{Theorem}[section]
\newtheorem{fact}[theorem]{Fact}
\newtheorem{lemma}[theorem]{Lemma}
\theoremstyle{definition}
\newtheorem{example}{Example}[section]
\def\RR{\mathbb{R}}
\def\PP{\mathrm{P}}
\def\CC{\mathbb{C}}
\def\GL{\mathrm{GL}}
\def\PP{\mathbb{P}}
\def\e{\mathbbm{1}}
\title[A group-action Szemer\'edi--Trotter and applications to orchard problems]{A group-action Szemer\'edi--Trotter Theorem and applications to orchard problems in all characteristics}
\author{Yifan Jing}
\address{Department of Mathematics, the Ohio State University, Columbus, OH, 43210, USA}
\email{jing.245@osu.edu}
\author{Tingxiang Zou}
\address{Tingxiang Zou, Mathematical Institute, University of Bonn, Endenicher Allee 60, 53115 Bonn, Germany}
\email{tzou@math.uni-bonn.de}
\subjclass[2020]{20G15, 51A05, 11D45} 
\begin{document}

\begin{abstract} 
We establish a group-action version of the Szemer\'edi--Trotter theorem over any field, extending Bourgain's result for the group $\mathrm{SL}_2(\mathsf{K})$. 
 As an Elekes--Szab{\'o}-type application, we obtain quantitative bounds on the number of collinear triples on reducible cubic surfaces in $\mathbb{P}^3(\mathsf{K})$, where $\mathsf{K} = \mathbb{F}_{q}$ and $\mathsf{K} = \mathbb{C}$, thereby improving a recent result by Bays, Dobrowolski, and the second author.
\end{abstract}
\maketitle

\section{Introduction}

The Szemerédi--Trotter theorem is a fundamental result in additive and extremal combinatorics. The original form of the theorem~\cite{SzT} 
asserts that if $A, B \subseteq \mathbb{R}^2$ are two sets of points in the plane, then the number of solutions to
\begin{equation}\label{eq: Sz-T for line}
 ab + a'b' = 1  \qquad  (a,a',b,b')\in A\times B
\end{equation}
is upper bounded by $O\left(|A|^{\frac{2}{3}} |B|^{\frac{2}{3}} + |A| + |B|\right)$. Over the past decades, generalizations of this theorem have been proven in various settings. In particular, the linear relation in \eqref{eq: Sz-T for line} can be replaced by algebraic curves $\mathcal{C}$ of bounded complexity, and the same upper bound holds if the relation $\mathcal{C}$ is so-called $K_{t,t}$-free for some absolute constant $t\geq2$. This means there are no $t$ distinct pairs $(a, a') \in A$ and $t$ distinct pairs $(b, b') \in B$ such that $\mathcal{C}$ vanishes on all $t^2$ choices of $(a, a', b, b')$. Generalizations to semi-algebraic sets, to $\mathbb{C}$, and to higher-dimensional varieties have also been studied (see~\cite{PS98}, \cite{FPSZ17}, \cite{CGS20}, \cite{Walsh} and so on).

Consider the Cartesian product setting where $A, B \subseteq \mathbb{R}$ are two finite sets of real numbers, with $|A| \geq |B|$. The number of solutions to the equation $ab + a'b' = 1$ with $a, a' \in A$ and $b, b' \in B$ is trivially bounded above by $|A||B|^2$. From a qualitative perspective, the Szemerédi–Trotter theorem suggests that this number should exhibit a power saving of the form $(|B|^2|A|)^{1-\delta}$ compared to the trivial bound, provided that $|B|^{2-c} \gg |A|$ for some fixed positive constant $c$. The size constraints on $|A|$ and $|B|$ arise from the error term in the Szemerédi–Trotter theorem. Specifically, when $|B|^2 \leq |A|$, the error term becomes dominant, leading only to the upper bound $|A|^2$, which is less effective than $|A||B|^2$, the trivial counting bound.

Since the Szemer\'edi--Trotter theorem is now available for algebraic curves over fields of characteristic 0, one can similarly obtain a power-saving counting result of the form $(|B|^2|A|)^{1-\delta}$, even when the line equation~\eqref{eq: Sz-T for line} is replaced by other algebraic curves, provided the ambient field has characteristic $0$. And this holds when the two sets  $A$ and $B$ have comparable sizes,  specifically when $|B|\leq |A|\leq |B|^{2-c}$. 

In light of the sum-product phenomenon, the product theorem~\cite{BGT, PS}, and the theory of approximate groups~\cite{BGTapprox}, one might naturally expect to obtain a similar power-saving counting result across all fields, including those of finite characteristic, without imposing size constraints on $A$ and $B$, provided that the algebraic curve is not ``controlled" by nilpotent groups. Further evidence supporting this heuristic was given by Bourgain~\cite{Bourgain}, who achieved such a result over $\mathbb F_p$, specifically for hyperbolas. He showed that for any fixed $t\in \mathbb{F}_p\setminus\{0\}$, the number of solutions to
\[
(a-b)(a'-b')=t,\qquad  a,a'\in A,\text{ and } b,b'\in B
\]
 is at most $|A|^{1-\delta}|B|^2$ when $|B|\leq |A|\leq |B|^r$ for any fixed constant $r$,  and the power-saving bound $\delta$ depends on $r$, provided that $|A|\ll p^{1-\varepsilon}$ (see \cite{shkredov2018asymptotic} for details). 

Bourgain proved the result by observing that the solutions of the hyperbola can be viewed as a set in $\mathrm{SL}_2$ acting on a set in the projective line. This allowed him to utilize Helfgott's product theorem~\cite{Helfgott} for $\mathrm{SL}_2(\mathbb{F}_p)$ to derive the desired bound. More precisely, he established the following result.

%\textcolor{red}{what is $\ll$?}
\begin{theorem}[Bourgain\cite{Bourgain}]
    For all $\varepsilon>0$ and $r>1$, there is a $\delta>0$ such that the following holds. Let $p$ be a large prime and $A\subseteq\mathbb{F}_p$, $S\subseteq \operatorname{SL}_2(\mathbb{F}_p)$ satisfying the conditions
    \begin{enumerate}
        \item $1\ll |A|\ll p^{1-\varepsilon}$;
        \item $\log |A|<r\log|S|$;
        \item $|S\cap gH|<|S|^{1-\varepsilon}$ for any proper subgroup $H\subset \operatorname{SL}_2(\mathbb{F}_p)$.
    \end{enumerate}
    For $g=(^{a ~b}_{c~d})\in\operatorname{GL}_2(\mathbb{F}_p)$, denote $\Gamma_g\subseteq\mathbb{F}_p^2$ the curve 
    \[cxy-ax+dy-b=0.\]
    Then $|\{(x,y,g)\in A\times A\times S; (x,y)\in\Gamma_g\}|<|A|^{1-\delta}|S|$.
\end{theorem}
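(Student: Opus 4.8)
The first move is to recognize $\Gamma_g$ as essentially the graph of the M\"obius action of $\mathrm{SL}_2(\mathbb{F}_p)$ on $\mathbb{P}^1(\mathbb{F}_p)$: for $g=\left(\begin{smallmatrix}a&b\\c&d\end{smallmatrix}\right)$ the equation $cxy-ax+dy-b=0$ is exactly $y=\frac{ax+b}{cx+d}=g\cdot x$, apart from the single point $x=-d/c$ which $g$ sends to $\infty$. Identifying $A$ with a subset of $\mathbb{P}^1(\mathbb{F}_p)$, the quantity to be bounded equals $\sum_{g\in S}|gA\cap A|+O(|S|)$, with $gA$ the image of $A$; the error is harmless since $|A|$ is large, so I plan to bound $I:=\sum_{g\in S}|gA\cap A|$. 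By Cauchy--Schwarz over $\mathbb{P}^1(\mathbb{F}_p)$, $I^2\le|A|\sum_{g,h\in S}|gA\cap hA|=|A|\sum_{w\in\mathrm{SL}_2(\mathbb{F}_p)}\nu(w)|wA\cap A|$, where $\nu(w):=|\{(g,h)\in S^2:g^{-1}h=w\}|$ is the energy measure of $S$ (so $\nu=\e_{S^{-1}}*\e_S$, $\nu(e)=|S|$, $\sum_w\nu(w)=|S|^2$). Writing $|wA\cap A|=\sum_{a,a'\in A}\e[wa'=a]$ and using that $\{w:wa'=a\}$ is a coset of the Borel subgroup $\mathrm{Stab}(a')$, this becomes $\sum_{a,a'\in A}\nu(\{w:wa'=a\})$. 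The diagonal $a=a'$ contributes $\sum_{a\in A}\nu(\mathrm{Stab}(a))=\sum_{g,h\in S}|A\cap\mathrm{Fix}(g^{-1}h)|\le 2|A||S|+2|S|^2$, since every non-central element of $\mathrm{SL}_2(\mathbb{F}_p)$ has at most two fixed points on $\mathbb{P}^1$.

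Everything then rests on the off-diagonal sum $\sum_{a\ne a'}\nu(\{w:wa'=a\})$, over $|A|^2$ cosets of Borel subgroups. Using hypothesis (3) at face value gives only $\nu(\text{any coset of a proper subgroup})<|S|^{2-\varepsilon}$, hence $I\ls|A|^{3/2}|S|^{1-\varepsilon/2}$, which is too weak by a large power of $|A|$ once $r$ is large. The improvement I would use is that $\nu$ is in fact nearly equidistributed over cosets of proper subgroups, $\nu(gH)\ls\frac{|S|^2|H|}{|\mathrm{SL}_2(\mathbb{F}_p)|}\cdot p^{o(1)}$ up to a small error, for all proper $H$. Because $\nu$ is a self-convolution $\e_{S^{-1}}*\e_S$ of a set $S$ that escapes every proper-subgroup coset, this is precisely the flattening/mixing statement powered by Helfgott's product theorem for $\mathrm{SL}_2(\mathbb{F}_p)$ together with the Bourgain--Gamburd $L^2$-flattening package (if a single convolution turns out not to flatten enough, one iterates the Cauchy--Schwarz to bring in more copies of $S$). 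Inserting $|H|=|\mathrm{Stab}(a')|\asymp p^2$ and $|\mathrm{SL}_2(\mathbb{F}_p)|\asymp p^3$, the off-diagonal term is $\ls|A|^2|S|^2 p^{-1}p^{o(1)}$, so $I\ls|S||A|^{3/2}p^{-1/2}+|A||S|^{1/2}$; the hypotheses $|A|\ll p^{1-\varepsilon}$ and $|A|<|S|^r$ then give $I<|A|^{1-\delta}|S|$ once $\delta=\delta(\varepsilon,r)$ is chosen small enough. The flattening needs $|S|<p^{3-\varepsilon}$; in the opposite range one bounds $I$ trivially by $\sum_{g\in\mathrm{SL}_2(\mathbb{F}_p)}|gA\cap A|=|A|^2|\mathrm{Stab}(\mathrm{pt})|\asymp|A|^2p^2<|A|^{1-\delta}p^3$, again using $|A|\ll p^{1-\varepsilon}$.

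The hard part will be the equidistribution input: making the consequence of the product theorem quantitatively precise enough (a genuine power-of-$p$ gain in the error, holding uniformly across $1\ll|A|<|S|^r<p^{(3-\varepsilon)r}$) and carefully tracking how the resulting loss dictates the dependence of $\delta$ on $\varepsilon$ and $r$; the rest is dyadic pigeonholing and elementary combinatorics on $\mathbb{P}^1(\mathbb{F}_p)$. An alternative I considered, and would fall back on if the energy route's error terms misbehave, is a pivot argument: popularity produces a large $S'\subseteq S$ (still escaping subgroups, with a slightly worse parameter) such that $|gA\cap A|\gtrsim|A|^{1-\delta}$ for $g\in S'$, together with a point $a^*$ with $S'a^*\subseteq A$, which confines $S'$ to at most $|A|$ cosets of $\mathrm{Stab}(a^*)$; a Balog--Szemer\'edi--Gowers step should then yield a large subset of $S'$ with small tripling, whence Helfgott's theorem forces concentration on a proper-subgroup coset, contradicting (3). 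But turning the overlap hypothesis into an efficient BSG input here is itself subtle, which is why I favor the energy formulation above.
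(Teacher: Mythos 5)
Your proposal correctly identifies the two main ingredients (iterated Cauchy--Schwarz to bring in higher convolution powers of $S$, then the Helfgott/Bourgain--Gamburd $L^2$-flattening machinery driven by hypothesis~(3)), but the energy route you favor has a gap at precisely the step you flag as ``the hard part,'' and it is not one that can be patched by dyadic pigeonholing. The equidistribution input you need for the off-diagonal term, namely
\[
\nu(gH)\ \lesssim\ \frac{|S|^2\,|H|}{|\mathrm{SL}_2(\mathbb{F}_p)|}\, p^{o(1)}
\]
with $|H|\asymp p^2$ and $|G|\asymp p^3$, forces the relevant convolution power $\mu_S^{(2^m)}$ to be essentially uniform on all of $G$, e.g.\ $\|\mu_S^{(2^m)}\|_{L^\infty(G)}\lesssim p^{-3+o(1)}$. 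Each flattening step gains only a factor $|S|^{-c\varepsilon}$ in $L^2$, so one needs on the order of $m\asymp \frac{\log p}{\varepsilon\log|S|}$ iterations before the measure has spread over the group. Under the hypotheses $1\ll|A|\ll p^{1-\varepsilon}$ and $|A|<|S|^r$, the set $S$ (and hence $|S|$ relative to $p$) can be arbitrarily small, so $m$ is unbounded, and since each Cauchy--Schwarz/Jensen pass squares $\delta$ (you must keep $2^m\delta\ll1$), the $\delta$ you produce decays to $0$ with $|A|/p$ rather than depending on $\varepsilon$ and $r$ alone. The trivial coset bound $\nu(gH)\le|S|^{2-\varepsilon}$ from~(3) alone gives, as you note, only $I\lesssim|A|^{3/2}|S|^{1-\varepsilon/2}$, which is useless once $r$ is large; between that and full equidistribution there is no intermediate statement available from a \emph{bounded} number of flattening steps that makes your off-diagonal sum small enough.

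The paper circumvents this by never asking the measure to equidistribute relative to $|G|$. Bourgain's theorem is deduced from Theorem~\ref{thm: main1} by moving from $\mathbb{P}^1$ to $(\mathbb{P}^1)^{\times 3}$ and restricting to tuples $\widetilde X$ with trivial stabilizer; sharp $3$-transitivity of $\mathrm{PGL}_2$ supplies condition~(5) with $k=3$, so this loses only $O(|A|^{2})$ tuples. On the free part, for each ordered pair $(x,y)\in\widetilde X^2$ there is at most one $g$ with $gx=y$, and this single combinatorial fact gives the upper bound $|\Omega_t|\le 2|\widetilde X|^{1+t}$ on the set of ``almost invariant'' group elements, with no reference to $|G|$ or $p$. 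It is then compared against the lower bound $|\Omega_t|\gg |\widetilde X|^{-t}\big/\|\mu_S^{(2^m)}\|_{L^\infty}$ coming from the flattened measure, and the contradiction already occurs once $\|\mu_S^{(2^m)}\|_{L^\infty}\ll|S|^{-(cm\varepsilon+1/2)}$ beats $|\widetilde X|^{1+2t}\le|X|^{k(1+2t)}\le|S|^{rk(1+2t)}$ — i.e.\ after $m=\lceil 2kr/c\varepsilon\rceil$ steps, a quantity depending only on $\varepsilon,r,k$. This is the structural move your energy computation on $\mathbb{P}^1$ itself is missing: by staying on $\mathbb{P}^1$ the relevant ``cosets'' $\{w:wa'=a\}$ are Borel cosets of size $\asymp p^2$, and there is no free-action counting bound to replace the equidistribution you were forced to invoke. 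Your pivot/BSG fallback gestures in the right direction (BSG plus the product theorem \emph{is} what drives the flattening in the paper, via Theorem~\ref{thm: l2 flattening}), but as stated it confines $S'$ to $|A|$ cosets of one Borel, and hypothesis~(3) only caps each such intersection at $|S|^{1-\varepsilon}$, which does not by itself yield a small-tripling subset; you would again need the Cartesian-power/free-action device to close that argument.
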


Consider the map $\pi:\operatorname{SL}_2(\mathbb{F}_p)\to \operatorname{PSL}_2(\mathbb{F}_p)$ and $\tau: \mathbb{F}^*_p\to\mathbb{P}^1(\mathbb{F}_p)$ sending $x\mapsto [1:x]$ and the natual action of $\operatorname{PSL}_2(\mathbb{F}_p)$ on $\mathbb{P}^1(\mathbb{F}_p)$. Then 
%$(x,y)\in \Gamma_g$ if and only if $\pi(g)(\tau(x))=\tau(y)$.
the above result says that \[|\{(x',y',g')\in \tau(A)\times\tau(A)\times\pi(S):g'(x')=y'\}|\leq |\tau(A)|^{1-\delta}|\pi(S)|.\] As explained before, this is a power-saving result for the groups action $\mathrm{PSL_2}(\mathbb{F}_p)\curvearrowright\mathbb{P}^1(\mathbb{F}_p)$.

The idea of using expansion results in nonabelian groups to obtain power-saving counting bounds for number-theoretic problems has since become a central concept with numerous applications. It is an important ingredient in the landmark work of Bourgain, Gamburd, and Sarnak on the study of the Markoff equation~\cite{BGS}. Shkredov applied it to obtain a sum-product phenomenon for Sidon sets~\cite{Shkredov}, and more recently, this idea was also used in Green's counting results concerning quadratic forms in eight prime variables~\cite{Green}.

Our paper generalises Bourgain's theorem to all subgroups of general linear groups over arbitrary fields. Our tool is based on the main result of \cite{ProductThm}. 

\begin{theorem}\label{thm: main1}
For all $n\in\mathbb{N}^{>0}$, $\varepsilon_1, \varepsilon_2, \varepsilon_3>0$ and $r,k\geq 1$, there are $N\in\mathbb{N}$ and $\delta>0$ such that the following holds.
    Suppose $G$ is a subgroup of $\mathrm{PGL}_{n}(\mathsf{K})$ or $\mathrm{GL}_{n}(\mathsf{K})$ for some field $\mathsf{K}$ and that $G$ acts on some set $T$. 
    Suppose there are finite sets $S\subseteq G$, $X\subseteq T$ with $G=\langle S\rangle$ satisfying: 
\begin{enumerate}
    \item 
    $|X|<|G|^{\frac{1}{k}-\varepsilon_1}$ if $G$ is finite;
    \item
    $1\ll \log|X|<r\log|S|$;
    \item
    $|S\cap gH|<|S|^{1-\varepsilon_2}$ for any proper subgroup $H\leq G$ so that there is subgroup $D\subseteq (S^{-1}S)^{N}$ normal in $H$ and $H/D$ is nilpotent of step at most $n-1$. ($D=\{\mathrm{id}\}$ when $\mathrm{char}(\mathsf{K})=0$.) 
    \item For every $D\subseteq (S^{-1}S)^{N}$ and $D\lhd G$, $|D|\leq |G|/|S|^{\varepsilon_2}$, $G/D$ is not nilpotent of step at most $n-1$. ($D=\{\mathrm{id}\}$ when $\mathrm{char}(\mathsf{K})=0$.)
    \item
    $|\{(x_1,\ldots,x_k):x_i\in X,\mathrm{Stab}_G(x_1,x_2,..,x_k)\neq \mathrm{id}_G\}|\ll |X|^{k-\varepsilon_3}$.
\end{enumerate}
Then $|\{(x,y,g):x,y\in X,g\in S, x=gy\}|<|X|^{1-\delta}|S|$.
\end{theorem}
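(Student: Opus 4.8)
\emph{Overall strategy.} The plan is to argue by contradiction, following the template of Bourgain's $\mathrm{SL}_2$ argument and of Bourgain--Gamburd-type expansion arguments: assume $I:=|\{(x,y,g):x,y\in X,\ g\in S,\ x=gy\}|\ge|X|^{1-\delta}|S|$ and derive a contradiction once $\delta$ is small enough. The constant $N$ will be chosen as whatever the product theorem of \cite{ProductThm} demands for the given $n$, and $\delta$ will be pinned down only at the end, after every polynomial loss has been traded against $\varepsilon_1,\varepsilon_2,\varepsilon_3$ --- which is exactly what forces the quantifier order ``$\forall(n,\varepsilon_i,r,k)\ \exists(N,\delta)$''. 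Writing $X_g:=X\cap g^{-1}X$ we have $I=\sum_{g\in S}|X_g|\le|X||S|$, so Markov's inequality gives $S_1\subseteq S$ with $|S_1|\gtrsim|X|^{-\delta}|S|\ge|S|^{1-r\delta}$ (using (2)) such that $|X_g|\ge\tfrac12|X|^{1-\delta}$ for every $g\in S_1$.

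\emph{Passing to $k$-tuples and the dual count.} I would then work with the diagonal action $G\curvearrowright T^k$ and $X^k\subseteq T^k$. By (5) all but $\ll|X|^{k-\varepsilon_3}$ tuples of $X^k$ have trivial $G$-stabiliser; call these the generic tuples. For $g\in S_1$, at least $\gtrsim|X|^{k(1-\delta)}$ generic $\vec{x}$ satisfy $g\vec{x}\in X^k$, provided $\delta<\varepsilon_3/k$. The decisive feature of generic tuples is that $w\mapsto w\vec{x}$ is injective on $G$, which produces a \emph{dual counting bound}: calling $w\in G$ \emph{rich} when $\gtrsim|X|^{k(1-C\delta)}$ generic $\vec{x}$ have $w\vec{x}\in X^k$, we get, summing over generic $\vec{x}$, $\;|\{(w,\vec{x}):w\text{ rich},\ w\vec{x}\in X^k\}|\le\sum_{\vec{x}\text{ gen}}|\{w:w\vec{x}\in X^k\}|\le|X|^{2k}$, whence the set $R$ of rich elements is symmetric, contains $\mathrm{id}$, and satisfies $|R|\lesssim|X|^{k(1+O(\delta))}$. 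In the finite case, hypothesis (1) upgrades this to $|R|<|G|^{1-c}$ for some $c=c(\varepsilon_1,k)>0$ once $\delta$ is small.

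\emph{From incidences to coset concentration.} Next I would run a dependent-random-choice / Balog--Szemer\'edi--Gowers argument on the bipartite graph between $S_1$ and the generic tuples (edge density $\gtrsim|X|^{-k\delta}$): this yields $S'\subseteq S_1$ with $|S'|\ge|S|^{1-O(rk\delta)}$ such that at least $99\%$ of the pairs $g,h\in S'$ have $\gtrsim|X|^{k(1-O(\delta))}$ common neighbours, and for any such pair $hg^{-1}$ maps the (generic) images of those common neighbours into $X^k$, hence $hg^{-1}\in R$. A standard covering argument (two intermediate elements suffice, since $99\%$ is near $100\%$) then places all of $S'$ in a single coset $Hg_0$ of $H:=\langle R\rangle$, with $g_0\in S'$; since $|S'|\ge|S|^{1-O(rk\delta)}$, choosing $\delta<\varepsilon_2/(Crk)$ gives $|S\cap Hg_0|>|S|^{1-\varepsilon_2}$, while $S'g_0^{-1}$ is a subset of $H$ of size $\ge|S|^{1-O(rk\delta)}$ whose difference set lies in $RRR$, hence has size $\le|R|^3\lesssim|X|^{O(k)}$. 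So \emph{many incidences force $S$ to concentrate, up to one coset, near the subgroup generated by the few rich elements}.

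\emph{Closing with the product theorem, and the main obstacle.} Finally I would invoke the product theorem of \cite{ProductThm}, whose structural alternatives are exactly the configurations excluded by (3) and (4): (3) rules out $S$ concentrating in a coset of a proper subgroup that becomes nilpotent of step $\le n-1$ after quotienting by a bounded-complexity normal subgroup $D\subseteq(S^{-1}S)^N$, and (4) rules out $G/D$ being nilpotent of step $\le n-1$ for small normal $D\subseteq(S^{-1}S)^N$ --- the presence of $D$ and $N$ being precisely the positive-characteristic refinement ($D=\{\mathrm{id}\}$ in characteristic $0$ recovers Bourgain's hypothesis). With these alternatives forbidden, the product theorem forces rapid growth, which must then be shown incompatible with $S$ sitting, up to a coset, inside the ``small'' structured neighbourhood of $R$: in the finite case the grown set eventually has size $\ge|G|^{1-c'}$ with $c'>c$, contradicting the bound $|R|<|G|^{1-c}$; in the infinite-field case growth is unbounded while $R$ is a fixed finite set, which is absurd at once. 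The hard part --- and where real work is needed --- is exactly this interface: richness is \emph{not} preserved under arbitrary products, so one cannot feed $S$ itself to the product theorem and conclude; the argument has to be routed through $H$, $g_0$ and $R$, keeping the normal subgroup $D$ (and its conjugate by the bounded-length element $g_0$) under control throughout, so that hypotheses (3)--(4), phrased for $G$ and $S$, genuinely annihilate every structured alternative. This, together with the bookkeeping of absorbing all losses $|X|^{O(k\delta)}$, $|S|^{O(rk\delta)}$, $|R|^{O(1)}$ into the $\varepsilon_i$'s before $N$ and the product-theorem constants are named, is where the bulk of the proof lies.
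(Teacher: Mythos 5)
Your proposal shares the opening moves with the paper's argument (contradiction hypothesis, diagonal action on $T^k$, discarding the non-generic tuples using (5), and the dual count that bounds the size of the almost-stabilizing set --- your $R$ is precisely the paper's $\Omega_t$, and your bound $|R|\lesssim|X|^{k(1+O(\delta))}$ is their inequality $|\Omega_t|\le 2|\widetilde X|^{1+t}$). But from there the two routes diverge fundamentally, and yours has a gap that is not the one you flag at the end.

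The real missing ingredient is \emph{iteration}. You propose a single pass of dependent random choice / Balog--Szemer\'edi--Gowers followed by one application of the product theorem. The paper instead iterates the $L^2$-flattening lemma (their Theorem~\ref{thm: l2 flattening}, which packages BSG plus the product theorem into a dichotomy: either the $L^2$-norm of $\mu_S^{(2^m)}$ drops by $|S|^{-c\varepsilon_4}$, or one of the structural alternatives excluded by (3)/(4) occurs, or the measure is already close to uniform). The contradiction with the dual count $|\Omega_t|\le 2|\widetilde X|^{1+t}$ requires the $L^\infty$ bound $\|\mu_S^{(2^{m+1})}\|_\infty\ll|S|^{-(cm\varepsilon_4+1/2)}$ with $m\gtrsim 2kr/(c\varepsilon_4)$, i.e.\ about $2kr/(c\varepsilon_4)$ rounds of flattening. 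A single pass only gains one factor of $|S|^{-c\varepsilon_4}$, which is nowhere near enough to beat $|\Omega_t|\le 2|\widetilde X|^{1+t}$ when $t<1/2$; so your scheme cannot close as written, independently of the coset-bookkeeping issues you worry about. The obstacle you do identify (``richness is not preserved under products; keep $D$ and its conjugate under control'') is genuine but secondary --- the paper sidesteps it entirely by working with measures and their $L^2$ norms rather than with a fixed set of rich elements, which is exactly why the flattening dichotomy composes cleanly across iterations. Two further technical points in your sketch would also need repair: the step from ``$99\%$ of pairs $g,h\in S'$ give $hg^{-1}\in R$'' to ``$S'$ lies in one coset of $\langle R\rangle$'' needs the Balog--Szemer\'edi--Gowers \emph{for sets} applied to $\{\e_{S'}\}$ with a small-doubling conclusion, not just a high-density pair condition; and to feed $S'g_0^{-1}$ (or rather the resulting approximate group $H$) into conditions (3)/(4) you must track that $H\subseteq(S^{-1}S)^N$ --- your set naturally lands in powers of $SS^{-1}$, and the translation is handled in the paper by symmetrizing the measure to $\widetilde{\mu_S}*\mu_S$ from the start. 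Condition (1) enters the paper only in the ``uniform'' branch of the flattening dichotomy (via the bound $\|\mu_S^{(2^i)}\|_{L^2}\ll|S|^{\varepsilon_4}|G|^{-1/2}$), which your outline does not reach at all.
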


Note that if $G$ is simple, then condition (4) is automatically satisfied. For the case of $\operatorname{PSL}_2(\mathsf{K})$, we know that $\operatorname{PGL}_2(\mathsf{K})$ acts sharply 3-transitively on $\mathbb{P}^1(\mathsf{K})$ and $[\operatorname{PGL}_2(\mathsf{K}):\operatorname{PSL}_2(\mathsf{K})]$ is $1$ or $2$, hence (5) is satisfied with $k=3$ and any $0<\varepsilon_3<1$. Thus, Bourgain's theorem is a special case of Theorem~\ref{thm: main1}.

Condition (3) and (2) are similar in the sense that they are both saying $S$ should not concentrate on nilpotent-by-finite group, where finite here we mean relatively small, in particular contained in the finite power $S^N$. In an attempt to reduce assumptions, we obtain the following variant of Theorem~~\ref{thm: main1}.

\begin{theorem}\label{thm: main}
For all $n\in\mathbb{N}^{>0}$, $\varepsilon_2, \varepsilon_3>0$ and $r,k\geq 1$, there are $N\in\mathbb{N}$ and $\delta>0$ such that the following holds.
    Suppose $G$ is a subgroup of $\mathrm{PGL}_{n}(\mathsf{K})$ or $\mathrm{GL}_{n}(\mathsf{K})$ for some field $\mathsf{K}$ and that $G$ acts on some set $T$. 
    Suppose there are finite sets $S\subseteq G$, $X\subseteq T$ with $G=\langle S\rangle$ satisfying: 
\begin{enumerate}
    \item
    $1\ll \log|X|<r\log|S|$;
    \item
    $|S\cap gH|<|S|^{1-\varepsilon_2}$ for any subgroup $H\leq G$ so that there is subgroup $D\subseteq (S^{-1}S)^{N}$ normal in $H$ and $H/D$ is nilpotent of step at most $n-1$. ($D=\{\mathrm{id}\}$ when $\mathrm{char}(\mathsf{K})=0$.)
    \item
    $|\{(x_1,\ldots,x_k):x_i\in X, \mathrm{Stab}_G(x_1,x_2,..,x_k)\neq \mathrm{id}_G\}|\ll |X|^{k-\varepsilon_3}$.
\end{enumerate}
Then $|\{(x,y,g):x,y\in X,g\in S, x=gy\}|<|X|^{1-\delta}|S|$.
\end{theorem}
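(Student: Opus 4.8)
The plan is to deduce Theorem~\ref{thm: main} from Theorem~\ref{thm: main1}. Given the data $n,\varepsilon_2,\varepsilon_3,r,k$ of Theorem~\ref{thm: main}, I would fix the auxiliary value $\varepsilon_1:=\tfrac{1}{2k}$, feed the tuple $(n,\varepsilon_1,\varepsilon_2,\varepsilon_3,r,k)$ into Theorem~\ref{thm: main1} to obtain some $N_1$ and a $\delta$, and then output this $\delta$ together with an $N$ chosen below. It remains to check that, for any $G,T,S,X$ satisfying the hypotheses of Theorem~\ref{thm: main} with this $N$, the five hypotheses of Theorem~\ref{thm: main1} hold with parameters $(n,\varepsilon_1,\varepsilon_2,\varepsilon_3,r,k)$ and power $N_1$. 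Hypotheses (2) and (5) of Theorem~\ref{thm: main1} are verbatim hypotheses (1) and (3) of Theorem~\ref{thm: main}. As soon as $N\ge N_1$ (and using $\mathrm{id}\in S^{-1}S$, so that $(S^{-1}S)^{N_1}\subseteq(S^{-1}S)^{N}$), hypothesis (2) of Theorem~\ref{thm: main}, i.e.\ the concentration bound for \emph{all} structured subgroups with $D\subseteq(S^{-1}S)^{N}$, implies hypothesis (3) of Theorem~\ref{thm: main1}, which asks the same only for \emph{proper} structured subgroups with $D\subseteq(S^{-1}S)^{N_1}$. For hypothesis (4) of Theorem~\ref{thm: main1}: if there were a normal $D\subseteq(S^{-1}S)^{N}$ with $G/D$ nilpotent of step $\le n-1$, then hypothesis (2) of Theorem~\ref{thm: main} applied with $H=G$ would give $|S|=|S\cap gG|<|S|^{1-\varepsilon_2}$, which is absurd; hence no such $D$ exists, in particular none with the extra size restriction in (4).

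The only real content is hypothesis (1) of Theorem~\ref{thm: main1}: when $G$ is finite, $|X|<|G|^{1/k-\varepsilon_1}$. If $G$ is infinite this is vacuous, so assume $G$ finite and, for contradiction, that $|X|\ge|G|^{1/k-\varepsilon_1}$. Combined with hypothesis (1) of Theorem~\ref{thm: main} this gives $|S|>|X|^{1/r}\ge|G|^{(1/k-\varepsilon_1)/r}=|G|^{1/(2kr)}$. Now invoke the growth input of \cite{ProductThm}: since $G=\langle S\rangle$ sits inside $\mathrm{GL}_n(\mathsf{K})$ or $\mathrm{PGL}_n(\mathsf{K})$ (rank $\le n$) and, by hypothesis (2) of Theorem~\ref{thm: main}, $S$ does not concentrate on any proper structured subgroup, iterating the product theorem yields an $N^\ast=N^\ast(n,k,r)$ with $(S^{-1}S)^{N^\ast}=G$. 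Choosing $N:=\max(N_1,N^\ast)$, we then have $G=(S^{-1}S)^{N^\ast}\subseteq(S^{-1}S)^{N}$, so $D:=G$ is a normal subgroup of $H:=G$ contained in $(S^{-1}S)^{N}$ with $G/D$ trivial, hence nilpotent of step $0\le n-1$; hypothesis (2) of Theorem~\ref{thm: main} then yields $|S|<|S|^{1-\varepsilon_2}$, a contradiction. Therefore $|X|<|G|^{1/k-\varepsilon_1}$, all hypotheses of Theorem~\ref{thm: main1} are met, and its conclusion $|\{(x,y,g):x,y\in X,\ g\in S,\ x=gy\}|<|X|^{1-\delta}|S|$ is exactly what is wanted.

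The step I expect to be the main obstacle is making the choice of $N$ genuinely non-circular and checking that \cite{ProductThm} really delivers the uniform bounded-diameter statement used above, with $N^\ast$ depending only on $n,k,r$ and not on the $N$ being chosen: one must confirm that ``no concentration on proper structured subgroups'' is precisely the hypothesis under which \cite{ProductThm} forces polynomial growth $|S^3|\ge|S|^{1+c(n)}$ until $S$ saturates $G$, and that the escape-from-subvarieties input it relies on is available over arbitrary (finite) fields of any characteristic. A secondary, routine point is the bookkeeping of the inclusions $(S^{-1}S)^{N_1}\subseteq(S^{-1}S)^{N}$ and of which parameters each constant depends on, so that the final $\delta$ depends only on the allowed data $n,\varepsilon_2,\varepsilon_3,r,k$. (Alternatively, one could re-run the incidence argument behind Theorem~\ref{thm: main1} directly, handling the ``$S$ saturates $G$'' branch via hypothesis (2) with $H=D=G$ exactly as above; the reduction seems the cleaner route.)
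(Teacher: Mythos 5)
Your proposal deduces Theorem~\ref{thm: main} from Theorem~\ref{thm: main1}, whereas the paper proves the two theorems \emph{together} by a single direct argument: it runs iterated $L^2$-flattening (Theorem~\ref{thm: l2 flattening}) on the symmetric convolutions $\mu_S^{(2^m)}$ and derives a contradiction from the size of the almost-stabiliser $\Omega_t$. For Theorem~\ref{thm: main} the flattening branch is always available (condition~(*) of Theorem~\ref{thm: l2 flattening}), so no size assumption on $X$ is ever needed, while for Theorem~\ref{thm: main1} a second ``near-uniform'' branch arises and is killed using hypothesis~(1). Your route is genuinely different in structure, and it is worth comparing the two.

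The reduction of hypotheses (2),(3),(4),(5) of Theorem~\ref{thm: main1} to those of Theorem~\ref{thm: main} is clean, and the idea of obtaining hypothesis~(1) by deriving $(S^{-1}S)^{N^\ast}=G$ from a diameter/growth iteration and then contradicting hypothesis~(2) with $H=D=G$ is the right one. However, that growth iteration is precisely the nontrivial content you would need to supply. To avoid circularity one fixes $j_{\max}\approx 2kr/(c(n)\varepsilon_2)$ and $N^\ast\approx 3^{j_{\max}}O_n(1)$ \emph{in advance}, and then at each step $j\le j_{\max}$ applies Fact~\ref{fact: product-theorem} to $T^{3^j}$ with $T:=S^{-1}S$: small tripling produces $\Gamma$ and $D_\Gamma\subseteq T^{3^jO_n(1)}\subseteq(S^{-1}S)^N$, the covering of $T^{3^j}$ together with $s_0^{-1}S\subseteq T$ pushes $S$ into few cosets of $\Gamma$, and hypothesis~(2) of Theorem~\ref{thm: main} then forces the tripling to be at least $|S|^{\varepsilon_2/O_n(1)}$. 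Note in particular that $N^\ast$ depends on $\varepsilon_2$ as well as $n,k,r$ (the gain per step is $|S|^{c(n)\varepsilon_2}$, not $|S|^{c(n)}$), a dependence your sketch omits; and the iteration must be formulated for the symmetric set $T=S^{-1}S$ rather than $S$ itself, so that the terminal case $T^{3^j}=\langle T\rangle$ (whether or not $\langle T\rangle=G$) still contradicts hypothesis~(2) via $H=D=\langle T\rangle$, since $S$ lies in a single coset $s_0\langle T\rangle$.

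Two further cautions. First, the paper asserts in the Introduction that ``there is no entailment between these two theorems,'' equivalence holding only in characteristic~$0$ via Jordan's theorem. Your reduction, if carried out, contradicts this remark, so either the remark is an imprecise aside (the authors may only mean that neither set of hypotheses formally subsumes the other without extra work) or there is a subtlety in the iteration you should locate; you should not quote your reduction as an established route without resolving that tension. Second, the reduction buys nothing quantitatively: both your $N^\ast$ and the paper's own $N=2^{\lceil 2kr/(c\varepsilon_2)\rceil}N_0$ are exponential in $kr/\varepsilon_2$, and the paper's direct argument is shorter because the $L^2$-flattening already packages the product-theorem iteration at the level of measures. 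Overall the proposal is a reasonable alternative plan, but the growth iteration is currently a gap, not a lemma.
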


Note that condition (2) in Theorem~\ref{thm: main} is equivalent to conditions (3) and (4) in Theorem~\ref{thm: main1} and the requirement that $S^N\neq G$. However, no requirement for the size of $X$ is imposed in Theorem~\ref{thm: main}. There is no entailment between these two theorems, and they are equivalent when the ambient field has characteristic $0$.\footnote{Since by Jordan's theorem, any finite subgroup of $\operatorname{GL}_n(\mathbb{C})$ has a large index abelian subgroup, where the index only depends on $n$.} 

The proofs of Theorems~\ref{thm: main1} and \ref{thm: main} are effective, featuring a single exponential dependence between $\delta$ and $\varepsilon$. An earlier, ineffective version of a similar result for fields of characteristic 0 was established in~\cite{BDZ} using techniques from model theory. \medskip

\noindent{\bf Applications.} We now turn to an application of Theorems~\ref{thm: main1} and \ref{thm: main} to orchard problems for cubic surfaces across all characteristics. This can be viewed as a step towards understanding the Elekes--Szab\'o theorem, or the Szemer\'edi--Trotter theorem for curves in finite fields, particularly for small sets. For results concerning relatively large sets, see~\cite{TaoExpand}.

The planar orchard problem asks for an arrangement of $n$ points on the plane that maximizes the number of 3-rich lines, where a $k$-rich line is one that passes through exactly $k$ points from the set. This problem has a rich history dating back to 1821. The planar orchard problem was solved by Burr, Grünbaum, and Sloane~\cite{burr1974orchard} for the upper bound, and by Green and Tao~\cite{GT13} for the lower bound.

The orchard problem for points on planar curves in $\mathbb{R}$ was addressed in \cite{EScubic}, building on their earlier groundbreaking result, the Elekes--Szab{\'o} theorem \cite{ES}, which was later generalized to higher arity and dimensions under general position assumptions in \cite{BB21}.
In an effort to remove the general position assumption, Bays, Dobrowolski, and the second author studied the orchard problem on smooth cubic surfaces over $\mathbb{C}$ in \cite{BDZ}, where they developed a tool analogous to Theorem \ref{thm: main} over fields of characteristic 0. Analogously, using Theorem \ref{thm: main}, we obtain a non-trivial upper bound on the exponent for the orchard problem for points on three non-parallel planes in projective three-space over any field.
\begin{theorem}\label{thm: three planeIntro}(See Theorem~\ref{thm: three plane} for a more general version)
    For any $\varepsilon>0$, there is $N\in\mathbb{N}$ and $\delta>0$, such that for all fields $\mathsf{K}$, the following holds for any projective planes $P_1,P_2,P_3\in \mathbb{P}^3(\mathsf{K})$ not intersecting in a same line: \\
    Suppose there are finite sets $X_i\subseteq P_i$ for $i=1,2,3$ with 
    \begin{enumerate}
        \item 
        $1\ll |X_1|=|X_2|=|X_3|$;
        \item 
        $|X_i|\leq p^{1/N}$ if $\operatorname{char}(\mathsf{K})=p>0$;
        \item 
        $|X_i\cap \ell|\leq |X_i|^{1-\varepsilon}$ for any projective line $\ell\subseteq P_i$ for $i=1,2,3$.
    \end{enumerate}
    Then, \[|\{(x_1,x_2,x_3)\in X_1\times X_2\times X_3: x_1,x_2,x_3 \text{ distinct and collinear}\}|\leq |X_1|^{2-\delta}.\]
\end{theorem}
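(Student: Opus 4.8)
The plan is to recast collinearity on the three planes as an incidence between a set of points in $\mathbb{P}^2$ and a set of group elements in $\mathrm{PGL}_3(\mathsf{K})$, and then to apply Theorem~\ref{thm: main} with $n=3$, $k=4$. Fix identifications $\iota_2\colon\mathbb{P}^2\xrightarrow{\sim}P_2$ and $\iota_3\colon\mathbb{P}^2\xrightarrow{\sim}P_3$ that restrict to the same map on the preimage of the line $m:=P_2\cap P_3$. For $x_1\in P_1$ not lying on $P_1\cap P_2$ or $P_1\cap P_3$, the projection from $x_1$ restricts to an isomorphism $P_3\xrightarrow{\sim}P_2$ sending $x_3$ to the point where $\overline{x_1x_3}$ meets $P_2$; conjugating by the $\iota_i$ yields $g_{x_1}\in\mathrm{PGL}_3(\mathsf{K})$ acting on $\mathbb{P}^2$. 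A coordinate computation shows that $x_1\mapsto g_{x_1}$ is injective on this locus, and that a triple $x_1,x_2,x_3$ is collinear iff $\iota_2^{-1}(x_2)=g_{x_1}(\iota_3^{-1}(x_3))$ — except when the spanning line lies inside $P_2$, which for such $x_1$ forces $x_1\in P_1\cap P_2$, a contradiction. I would therefore set $X_1':=X_1\setminus\big((P_1\cap P_2)\cup(P_1\cap P_3)\big)$ (so $|X_1'|=|X_1|-O(|X_1|^{1-\varepsilon})$ by hypothesis (3)), $S:=\{g_{x_1}:x_1\in X_1'\}$, $X:=\iota_2^{-1}(X_2)\cup\iota_3^{-1}(X_3)\subseteq\mathbb{P}^2$, $G:=\langle S\rangle$; then the number of distinct collinear triples with $x_1\in X_1'$ is at most $\big|\{(x,y,g):x,y\in X,\,g\in S,\,x=gy\}\big|$, while $|X_1|\le|X|\le2|X_1|$ and $|S|=|X_1'|\asymp|X_1|$.

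The structural core is that every $g_{x_1}$ fixes $m$ pointwise (a point of $m\subseteq P_2\cap P_3$ is its own image under projection from $x_1$), so $S$ lies inside $H_0:=\{\,g\in\mathrm{PGL}_3(\mathsf{K}):g|_m=\mathrm{id}\,\}$, which in affine coordinates on $\mathbb{A}^2=\mathbb{P}^2\setminus m$ is the metabelian group $\mathsf{K}^2\rtimes\mathsf{K}^\times$ of homotheties and translations; concretely $g_{[a:b:c:d]}$ is the homothety of ratio $-d/c$ centred at $(a/(c+d),b/(c+d))$. Every subgroup of $H_0$ that is nilpotent of step $\le2$ is in fact abelian, contained either in the translation subgroup or in a one‑parameter group of dilations about a fixed point, and the same holds after quotienting by a normal subgroup. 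To verify condition (2) of Theorem~\ref{thm: main}, suppose $|S\cap gH|\ge|S|^{1-\varepsilon_2}$ for a subgroup $H$ with $H/D$ nilpotent of step $\le2$ for some normal $D\subseteq(S^{-1}S)^N$. Then, up to the bounded-size error controlled by $D$ (whose size is $\le|S|^{O(N)}\le p^{O(1)}$ thanks to the hypothesis $|X_i|\le p^{1/N}$), the elements $g_{x_1}$ concentrated on $gH$ either all share a common ratio $-d/c$, or all send one fixed point to one fixed point; each alternative is a single linear condition on $[a:b:c:d]$, confining the corresponding $x_1$ to $P_1\cap L$ for a proper linear subspace $L$. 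Since $P_1$ does not contain $m$ — which is exactly the hypothesis that $P_1,P_2,P_3$ share no common line — this intersection is a line $\ell\subseteq P_1$, so $|X_1\cap\ell|\ge|X_1|^{1-\varepsilon_2}$, contradicting hypothesis (3) once $\varepsilon_2<\varepsilon$. Taking $H=G$ in the same argument shows $G$ is not nilpotent‑of‑step‑$\le2$ by a small normal subgroup inside $(S^{-1}S)^N$, so in particular $S^N\neq G$.

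For the remaining hypotheses: condition (1) is immediate since $|X|\asymp|S|\asymp|X_1|$; for condition (3) with $k=4$ I would use that $\mathrm{PGL}_3$ acts with trivial stabiliser on any $4$-tuple of points of $\mathbb{P}^2$ in general position, so a $4$-tuple from $X$ with nontrivial $G$-stabiliser contains a collinear triple, and by hypothesis (3) there are $O(|X|\cdot|X|^2\cdot|X|^{1-\varepsilon})=O(|X|^{4-\varepsilon})$ such tuples. Theorem~\ref{thm: main} then gives $\big|\{(x,y,g):x,y\in X,g\in S,x=gy\}\big|<|X|^{1-\delta'}|S|\le 2|X_1|^{2-\delta'}$. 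It remains to count the degenerate triples — those with $x_1\in(P_1\cap P_2)\cup(P_1\cap P_3)$, equivalently those whose spanning line lies in $P_1$, $P_2$, or $P_3$ — by fixing that line: if the line $\ell$ lies in $P_3$ then all three points lie in $P_3$, and $\ell$ meets each of the lines $P_1\cap P_3$ and $P_2\cap P_3$ in at most one point unless $\ell$ is one of them, so summing $|X_1\cap(P_1\cap P_3)\cap\ell|\cdot|X_2\cap(P_2\cap P_3)\cap\ell|\cdot|X_3\cap\ell|$ over lines $\ell\subseteq P_3$ yields only $O(|X_1|^{2-\varepsilon})$; the cases $\ell\subseteq P_1$ and $\ell\subseteq P_2$ are symmetric. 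Adding the main and degenerate contributions and decreasing $\delta$ to absorb the constants gives $|X_1|^{2-\delta}$.

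The hard part is the verification of condition (2): one must classify, over an arbitrary field (including positive characteristic), the subgroups of $H_0$ that become nilpotent of step $\le2$ after quotienting by a normal subgroup inside a bounded power $(S^{-1}S)^N$, and show that concentration of $S$ on a coset of such a subgroup genuinely forces a linear dependence among the points $x_1$; keeping the normal subgroups $D\subseteq(S^{-1}S)^N$ small — this is where $|X_i|\le p^{1/N}$ is used — and separating them from the linear‑algebraic conclusion is the most delicate point. The injectivity of $x_1\mapsto g_{x_1}$, the reduction of collinearity to the group relation, and the degenerate‑configuration count are routine by comparison.
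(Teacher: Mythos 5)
Your proposal is correct in outline but takes a genuinely different reduction than the paper's. The paper (Lemma~\ref{lem: S}) builds the group elements from \emph{pairs} in $X_3$: $\gamma_{x,y}=\eta_y^{-1}\circ\eta_x\colon P_1\to P_1$, so $S$ is a subset of $\{\gamma_{x,y}:(x,y)\in X_3^2\}$ acting on $X_1$. Because $(x,y)\mapsto\gamma_{x,y}$ is not injective, the paper needs a dyadic pigeonholing step (the set $B_{k_0}$ and the estimate \eqref{equation: drop}) to control fiber sizes before it can invoke Theorem~\ref{thm: main}. You instead parametrize the group elements \emph{singly} by the center of projection $x_1\in X_1'$, projecting $P_3\to P_2$; since $x_1\mapsto g_{x_1}$ is injective, collinear triples inject directly into $\{(x,y,g):x=gy\}$ and Theorem~\ref{thm: main} gives the bound with no regularization step. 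This is cleaner. The price is that your point set $X$ mixes $\iota_2^{-1}(X_2)$ and $\iota_3^{-1}(X_3)$ into one set in $\mathbb{P}^2$, and you have to separately account for degenerate triples where $x_1\in(P_1\cap P_2)\cup(P_1\cap P_3)$; your count of those is correct in substance (a fixed pair determines the spanning line, the third factor is $\le1$ except on the line $m$, and $P_1\not\supseteq m$ handles that case). Both routes land on the same acting group $G_a(\mathsf{K})^2\rtimes G_m(\mathsf{K})$ (the pointwise stabilizer of a line), and both use that its nilpotent-step-$\le2$ subgroups are abelian, either translations (constant dilation ratio) or a conjugate of $G_m$ (fixed center mapped to a fixed point); your linear-condition argument matches the paper's Claim and uses the hypothesis that $P_1$ does not contain $m$ in the same way.

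Two points you should tighten if you write this up fully. First, where you say $|D|\le|S|^{O(N)}\le p^{O(1)}$: the argument (as in Lemma~\ref{lem: nilpotent subgroup}) actually needs $|D|<p$ so that $D\cap G_a^2=\{\mathrm{id}\}$ and $H/D$ nilpotent $\Rightarrow H$ nilpotent $\Rightarrow H$ abelian; this requires choosing the exponent in hypothesis~(2) of the theorem strictly larger than $2N_0$ where $N_0$ comes from Theorem~\ref{thm: main}, not merely "large." Second, your choice $k=4$ is unnecessarily wasteful: the action of $G\le H_0$ on $\mathbb{A}^2=\mathbb{P}^2\setminus\tilde m$ has trivial stabilizer already on any pair of distinct affine points, so $k=2$ with $\varepsilon_3=\varepsilon$ suffices (pairs with nontrivial stabilizer are either diagonal or have a coordinate on $\tilde m$, which is controlled by hypothesis~(3) since $m\subseteq P_2\cap P_3$ is a line in each $P_i$); this is what the paper does. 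Neither point affects correctness, and the approach is a legitimate, arguably simpler, alternative to the paper's.
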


 The above theorem says, if $X_i\subseteq P_i$ is a subset whose size is small compared to the characteristic of the ambient field (when the characteristic is positive) and does not concentrate on lines contained in $P_i$, then $X_1\times X_2\times X_3$ cannot have quadratically-many collinear triples.

 Let us now illustrate that all the assumptions are indeed necessary. Firstly, without the assumption that $X_i$ does not concentrate on lines, there can be $O(|X|^2)$-many triple lines, for example when $X_1,X_2,X_3$ are on three lines contained in a common plane, see \cite{GT-ordinaryLines} and \cite{EScubic}. If $P_1,P_2$ and $P_3$ are intersecting in a common line, then there are examples where $X_i$ does not concentrate on lines, but $X_1\times X_2\times X_3$ contains quadratically-many collinear triples, see \cite[Example 3.6]{BDZ}.

 The assumption on the upper bounds on each of the $|X_i|$ is also necessary, from the following example. 
 \begin{example}
    Suppose $\operatorname{char}(\mathsf{K})=p>0$.  Let $P_1:=\{[0:b:c:d]:b,c,d\in \mathsf{K}\}$, $P_2:=\{[a:0:c:d]: a,c,d\in \mathsf{K}\}$ and $P_3:=\{[a:b:c:c]: a,b,c\in \mathsf{K}\}$. Let $d\in\mathbb{F}_p^*$ be a generator of the cyclic group $(\mathbb{F}_p^*,\cdot,1)$. Choose $N=\lfloor p^\frac{1}{k}\rfloor$ for some fixed $k>1$. Let \begin{align*}
        X_1&:=\{[0:d^i:t:t-1]: i\in[-N,N],t\in \mathbb{F}_p\}\\
        X_2&:=\{[-d^i:0:t:t-1]: i\in[-N,N],t\in \mathbb{F}_p\},\text{ and }\\
        X_3&:=\{[d^i:1:t:t]:i\in[-N,N],t\in\mathbb{F}_p\}.
    \end{align*}
    
     Then $|X_i|\approx p^{1+\frac{1}{k}}$ and \[([0:d^j:z:z-1],[-d^{i+j}:0:z-td^j:z-1-td^j],[d^i:1:t:t])\] is a collinear triple for any $i,j,t,z$. Hence, there are approximately $|X_i|^2$-many collinear triples. One can also verify that for any line $\ell$ contained in $P_i$, either $|X_i\cap \ell|\leq 2N+1$ or $|X_i\cap \ell|\leq p$. Hence, $X_i$ does not concentrate on lines. \hfill$\bowtie$
 \end{example}

 From the above example, to obtained a power-saving counting result, the size of each of the $|X_i|$ must be upper bounded. In the upper bound assumption we have in Theorem~\ref{thm: three planeIntro}, $N$ depends on $\varepsilon$ from the proof. We suspect that $N$ should be a fixed constant.

We also obtain a version of the orchard problem when two points of a collinear triple lie on a fixed smooth quadric surface in $\mathbb{P}^3(K)$ for $\mathsf{K}=\CC$ or a finite field.

\begin{theorem}\label{thm: quadricIntro}(See Theorem~\ref{thm: quadric})
    For any $\varepsilon>0$, there is $N\in\mathbb{N}$ and $\delta>0$, such that for all $\mathsf{K}=\mathbb{F}_{p^n}$ (uniform in $p$ and $n$) or $\mathsf{K}=\CC$, the following holds for any smooth quadratic surface $Q\subseteq \mathbb{P}^3(\mathsf{K})$. Suppose there are finite sets $X\subseteq Q$ and $S\subseteq \mathbb{P}^3(\mathsf{K})\setminus Q$ with 
    \begin{enumerate}
        \item 
        $1\ll |X|=|S|$;
        \item 
        $|X|\leq p^{1/N}$ if $\operatorname{char}(\mathsf{K})=p>0$;
        \item 
        $\max\{|X\cap \ell|, |S \cap\ell|\}\leq |X|^{1-\varepsilon}$ for any projective line $\ell\subseteq  \mathbb{P}^3(\mathsf{K})$.
    \end{enumerate}
    Then, \[|\{(x_1,x_2,x_3)\in X^2\times S: x_1,x_2,x_3 \text{ distinct and collinear}\}|\leq |X|^{2-\delta}.\]
\end{theorem}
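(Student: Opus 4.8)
\medskip
\noindent\textbf{Proof proposal.}
The plan is to recognise a collinear triple as an instance of the group action in Theorem~\ref{thm: main} (or Theorem~\ref{thm: main1}), with the group being the projective orthogonal group of $Q$. Write $Q=V(q)$ for a nondegenerate quaternary quadratic form $q$ over $\mathsf{K}$, with polarisation $B$. For $p\in\mathbb{P}^3(\mathsf{K})\setminus Q$ the linear reflection $R_p\colon v\mapsto v-\frac{2B(v,p)}{q(p)}p$ lies in $\mathrm{O}(q)$, hence preserves $Q$, and for $x\in Q$ the point $R_p(x)$ is precisely the residual intersection of the secant line $\overline{px}$ with $Q$. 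Consequently a triple $(x_1,x_2,p)$ with $x_1\neq x_2$ and $p\notin Q$ is collinear if and only if $x_2=\sigma_p(x_1)$ for $\sigma_p:=R_p|_Q$. Since $Q$ spans $\mathbb{P}^3$ and $[p]\mapsto[R_p]$ is injective on $\mathbb{P}^3\setminus Q$, the map $p\mapsto\sigma_p$ is injective; putting $\widetilde S:=\{\sigma_p:p\in S\}$ and $G:=\langle\widetilde S\rangle\leq\mathrm{PO}(q)\leq\mathrm{PGL}_4(\mathsf{K})$ acting on $T:=Q$, the number to be estimated equals, up to the $O(|X|)$ triples with $x_1=x_2$, exactly $|\{(x,y,g):x,y\in X,\ g\in\widetilde S,\ y=gx\}|$, and $|\widetilde S|=|S|=|X|$. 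It therefore suffices to check the hypotheses of Theorem~\ref{thm: main} with $n=4$, $r=1$ and a suitable $k$; its conclusion yields $|X|^{1-\delta}|\widetilde S|=|X|^{2-\delta}$. Condition~(1) there is immediate.

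\smallskip
\noindent\emph{The stabiliser hypothesis.} Over $\bar{\mathsf{K}}$ one has $Q\cong\mathbb{P}^1\times\mathbb{P}^1$ and $\mathrm{PO}(q)\cong(\mathrm{PGL}_2\times\mathrm{PGL}_2)\rtimes\mathbb{Z}/2\mathbb{Z}$, the $\mathbb{Z}/2\mathbb{Z}$ exchanging the two rulings. Here $k=3$ does not suffice: any three points of $Q$ in general position are fixed by the reflection through the pole of the plane they span. Four points in general position have trivial stabiliser, so we take $k=4$, and the task becomes to bound the number of $4$-tuples in $X^4$ with nontrivial stabiliser. Unwinding the group law, such a $4$-tuple either has two of its points on a common ruling line of $Q$, or has all four points on a common hyperplane section (a smooth conic) of $Q$. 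The first type is controlled by the line hypothesis, since $\sum_{\ell\text{ ruling}}|X\cap\ell|^2\leq 2|X|\max_\ell|X\cap\ell|\leq 2|X|^{2-\varepsilon}$. For the second, if $\max_{C}|X\cap C|\leq|X|^{1-\varepsilon}$ over hyperplane-section conics $C$, then such $4$-tuples number at most $|X|^3\cdot|X|^{1-\varepsilon}$, condition~(3) holds with $\varepsilon_3=\varepsilon$, and Theorem~\ref{thm: main} applies directly. Otherwise $X$ concentrates on a conic, and here the group-action theorem is not applicable; instead one bounds the collinear triples by hand. Writing $C_0=Q\cap H_0$ for a conic with $|X\cap C_0|>|X|^{1-\varepsilon}$, triples with $x_1,x_2\in C_0$ force $p\in H_0$ and are governed by the second-intersection involution of $C_0\cong\mathbb{P}^1$, i.e.\ by the action $\mathrm{PGL}_2(\mathsf{K})\curvearrowright\mathbb{P}^1(\mathsf{K})$, whence Bourgain's theorem (equivalently Theorem~\ref{thm: main} for $\mathrm{PSL}_2$) bounds them by $|X|^{2-\delta}$; the remaining triples have a point of $X$ off $C_0$, and for these one must exploit that, over $\mathbb{F}_q$, $C_0$ and every hyperplane section through a point of $X$ has $\approx q$ points while $|X|\leq p^{1/N}$, so that the reflected point $\sigma_p(x_1)$ is sufficiently pseudorandom to meet $X$ at most $O(|X|^{2-\delta})$ times in aggregate (after, if necessary, peeling off the $O(|X|^{\varepsilon})$ conics of $Q$ that are heavy for $X$ and recursing on the remainder). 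This case analysis and the accompanying bookkeeping is the step I expect to be the principal obstacle, and it is precisely here that the hypothesis $|X|\leq p^{1/N}$---hence the restriction to $\mathsf{K}=\mathbb{F}_{p^n}$ or $\mathsf{K}=\mathbb{C}$---is essential.

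\smallskip
\noindent\emph{Non-concentration on nilpotent-by-bounded subgroups.} It remains to verify condition~(2) of Theorem~\ref{thm: main}: for every $H\leq G$ admitting a normal subgroup $D\subseteq(\widetilde S^{-1}\widetilde S)^N$ with $H/D$ nilpotent of step $\leq 3$ (and $D=\{\mathrm{id}\}$ if $\mathrm{char}(\mathsf{K})=0$), one has $|\widetilde S\cap gH|<|\widetilde S|^{1-\varepsilon_2}$. I would derive this from the structure of large nilpotent-by-bounded subgroups of the $\mathsf{K}$-points of the algebraic group $\mathrm{PO}_4$: up to bounded index such a subgroup lies in the stabiliser of a point, a line, or an isotropic flag in $\mathbb{P}^3$, or in one of the two $\mathrm{PGL}_2$-factors. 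Translating each case along $p\mapsto R_p$ shows that a $|\widetilde S|^{1-\varepsilon_2}$-fraction of $\widetilde S$ lying in a coset of such an $H$ forces the corresponding poles $p\in S$ to concentrate on a single line of $\mathbb{P}^3$---e.g.\ the reflections fixing a fixed line $L$ pointwise are exactly those with $p\in L^\perp$, and the reflections contained in a fixed dihedral subgroup all have their poles on its ``rotation line''---contradicting $|S\cap\ell|\leq|X|^{1-\varepsilon}$. The subtle case is that of a tangent plane $T_OQ=O^\perp$: concentration of $S$ there makes $\widetilde S$ concentrate on $\mathrm{Stab}_Q(O)$, which is solvable but not nilpotent, so one must check that its only normal subgroup with nilpotent quotient of bounded step is the unipotent radical, of size $\approx q$, which fails to be contained in any bounded power of $\widetilde S^{-1}\widetilde S$ unless $\widetilde S$ already fills it---and the latter again forces $S$ onto a line. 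When $\mathsf{K}=\mathbb{C}$ this is much easier: $D=\{\mathrm{id}\}$ throughout and finite subgroups are abelian-by-bounded by Jordan's theorem, and the conic-concentration branch degenerates, so the argument collapses to a clean application of Theorem~\ref{thm: main} with $k=4$. Finally, the non-split quadric over $\mathbb{F}_q$ is handled by running the same analysis over $\bar{\mathsf{K}}$, where $Q$ splits and the rulings become defined, Frobenius merely swapping the two $\mathrm{PGL}_2$-factors.
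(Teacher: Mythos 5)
Your reduction to the group action of the orthogonal group is the paper's approach, but the choice of parameters in the group-action theorem is where the argument goes astray. You take $G=\langle\sigma_p:p\in S\rangle\leq\mathrm{PO}(q)$ and $k=4$, and you correctly observe that $\mathrm{PO}(q)$ contains reflections $R_{H^\perp}$ fixing an entire plane section $H\cap Q$, so that $4$-tuples lying on a conic can have a nontrivial stabiliser. You are then forced into the conic-concentration dichotomy, which you yourself flag as ``the principal obstacle''; the branch where $X$ concentrates on a conic and the ``pseudorandomness of reflected points'' heuristic are not arguments. There is also no hypothesis controlling $|X\cap C|$ for plane conics $C$, so the bad $4$-tuple count can genuinely reach $\approx|X|^4$, and Theorem~\ref{thm: main} cannot be applied. (A minor related slip: with $|X|=|S|$ one needs $r>1$ in condition (1), so $r=1$ does not work.)

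The paper avoids the entire dichotomy by two small but decisive modifications. First, it fixes $h_0\in\mathrm{PO}_4(\mathsf{K})\setminus\mathrm{PSO}_4(\mathsf{K})$ and replaces $(\Gamma_S,X)$ by $(h_0\Gamma_S,\,X\cup h_0X)$, so that the acting set lands in $\mathrm{PSO}_4(\mathsf{K})$, where (Lemma~\ref{lem: (5)quadric}) a nontrivial element fixes at most four points, a ruling line, or a union of two ruling lines of $Q$ --- in particular, no plane reflection survives in $\mathrm{PSO}_4$ and ``fixing a conic'' never happens. Second, it takes $k=5$: any five distinct points with no three collinear then have trivial $\mathrm{PSO}_4$-stabiliser, so the bad $5$-tuples are covered by those with a repeat ($O(|X|^4)$) or with three collinear ($O(|X|^{5-\varepsilon})$ by the line hypothesis), and condition (3) of Theorem~\ref{thm: main} follows from the given line hypothesis alone, with no conic hypothesis and no case analysis. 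Your non-concentration sketch for nilpotent-by-bounded subgroups is heading in the right general direction (Larsen--Pink / Jordan to reduce to abelian, then forcing the poles onto a line), but it would need to be carried out with the paper's structural lemmas (Lemmas~\ref{lem: tilde g}--\ref{lem: abelian intersection}); as written it is a plausibility argument rather than a proof. The essential missing idea, however, is the $h_0$-coset shift into $\mathrm{PSO}_4$ together with $k=5$.
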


As our proof is effective, the power-saving constant $\delta$ we obtained from both Theorem~\ref{thm: three planeIntro} and Theorem~\ref{thm: quadricIntro} grows exponentially on $\varepsilon$. We remark that in~\cite{BDZ}, the authors posed the question of whether $\delta$ can be chosen to depend linearly on $\varepsilon$, and as of yet, the answer to this question remains unknown.\medskip

\noindent{\bf Organization of the paper.} In Section 2, we will review Bourgain--Gamburd's $L^2$-flattening lemma and present a modification of it. In Section 3, we prove Theorems~\ref{thm: main1} and \ref{thm: main}. We then move to the application of these results to the orchard problem for cubic surfaces. In Section 4, we address the three planes case and prove Theorem~\ref{thm: three planeIntro}. Finally, in Section 5, we complete the proof of Theorem~\ref{thm: three planeIntro}.
\medskip

\noindent{\bf Acknowledgments.} The authors would like to thank Ben Green and Peter Sarnak for insightful discussions regarding generalizations of the main theorem in \cite{Bourgain}. We also thank Akshat Mudgal for drawing our attention to \cite{Murphyline} and for related discussions, and Ilya Shkredov for his careful reading of the manuscript and many valuable comments. We would also like to thank Weikun He for helpful discussions around finite subgroups of general linear groups, and Chuzhan Jin for drawing the figure. Additionally, we are grateful to Gabriel Conant, Caroline Terry, and Julia Wolf for organizing the \emph{Combinatorics Meets Model Theory} workshop at Cambridge in the summer of 2022, which made this collaboration possible.

\section{A modification of Bourgain--Gamburd's $L^2$-flattening}

\subsection{Notation and Preliminaries}
Given a group $G$ and a subset $A\subseteq G$, we denote $A^n$ the $n$-fold product of $A$, namely $A^n:=\{g_1\cdots g_n:g_i\in A\}$. For the Cartesian product $A\times\cdots \times A$. To avoid confusion, we will use the unconventional $A^{\times k}$ in section \ref{sec: group action}.

Given a group $G$ and a finitely supported function $f:G\to\RR$ we define the $L^p$-norm of $f$ to be
\[
\|f\|_{L^p(G)}= \Big(\sum_g |f(g)|^p\Big)^{\frac{1}{p}}.
\]
As usual, given two finitely supported functions $f,h:G\to\RR$, the convolution is defined by
\[
f*g(x)=\sum_y f(y)g(y^{-1}x).  
\]
Given a set $A\subseteq G$ we use $\e_A$ to denote the indicator function of $A$. That is $\e_A(x)=1$ when $x\in A$, and otherwise $\e_A(x)=0$. Given a finite set $A$, we write $\mu_A=\frac{1}{|A|}\e_A$, which is a probability measure supported on $A$.

We will use the following version of the Balog--Szemer\'edi--Gowers theorem for measures in nonabelian groups. 
We include a proof in the appendix for the sake of completeness.
\begin{theorem}\label{thm: BSG}
Let $G$ be a group, and $K\geq 1$ be an integer. Suppose $\mu:G\to[0,1]$ is a finitely supported symmetric probability measure and $\|\mu*\mu\|_{L^2(G)}\geq K^{-1}\|\mu\|_{L^2(G)}$. 
Then there is an $O(K^{O(1)})$-approximate subgroup $H$ of $G$ such that
\begin{enumerate}
    \item  $|H|\ll K^{O(1)}/\|\mu\|_{L^2(G)}^2$;
    \item  $H\subseteq \mathrm{supp}(\mu)^6$;
    \item  $\mu(xH)\gg K^{-O(1)}$ for some $x\in G$.
\end{enumerate}
\end{theorem}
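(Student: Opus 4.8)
The plan is to reduce the statement about the probability measure $\mu$ to the combinatorial Balog--Szemer\'edi--Gowers theorem applied to a carefully chosen level set of $\mu$, and then use a Plünnecke-type / Ruzsa-covering argument inside the nonabelian group $G$ to extract the approximate subgroup. First I would normalise: write $N = \|\mu\|_{L^2(G)}^{-2}$, which one should think of as the ``effective support size'' of $\mu$. The hypothesis $\|\mu*\mu\|_{L^2(G)} \ge K^{-1}\|\mu\|_{L^2(G)}$ says that $\sum_x (\mu*\mu)(x)^2 \ge K^{-2}/N$, i.e. the additive energy of $\mu$ (in the multiplicative, nonabelian sense) is large: $\mathbb{E}(\mu) := \sum_x (\mu*\tilde\mu)(x)^2$ is $\gg K^{-2}\|\mu\|_{L^2}^2$ after using symmetry $\tilde\mu = \mu$. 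I would make this precise by a dyadic pigeonholing on the values of $\mu$: there is a level $\rho$ and a set $A = \{x : \rho \le \mu(x) < 2\rho\}$ such that $\rho^2|A| \gtrsim (\log N)^{-1}$-fraction... but to avoid logarithmic losses one should instead pigeonhole on $\mu*\mu$ directly, or appeal to a weighted BSG. The cleanest route is to invoke a weighted/statistical version of BSG: since $\mu$ is a probability measure with $\|\mu\|_{L^2}^2 = 1/N$ and multiplicative energy $\gtrsim K^{-O(1)}/N$, there is a subset $A \subseteq \mathrm{supp}(\mu)$ with $\mu(A) \gg K^{-O(1)}$, $|A| \ll K^{O(1)} N$, and small tripling $|A^{\pm}A^{\pm}A^{\pm}| \ll K^{O(1)}|A|$ (nonabelian BSG, as in Tao's noncommutative BSG, cf.\ Tao--Vu or the formulation in Breuillard--Green--Tao).

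From small tripling of $A$ I would pass to an approximate group by the standard fact (Tao): if $|AAA| \le K'|A|$ then $(A\cup A^{-1}\cup\{1\})^3$ is an $O(K'^{O(1)})$-approximate subgroup $H$ with $|H| \ll K'^{O(1)}|A|$ and $H \subseteq (A\cup A^{-1})^3 \subseteq \mathrm{supp}(\mu)^6$ — which gives conclusions (1) and (2) since $|A| \ll K^{O(1)} N = K^{O(1)}/\|\mu\|_{L^2}^2$. For conclusion (3), I would track density: $A$ itself satisfies $\mu(A) \gg K^{-O(1)}$, and since $H \supseteq A$ after a translate (or rather $A \subseteq xH$ for a single coset, obtained via Ruzsa covering of $A$ by translates of $H$ — finitely many translates by the small-doubling bound), pigeonholing the mass $\mu(A)$ among those $O(K^{O(1)})$ cosets yields a single coset $xH$ with $\mu(xH) \gg K^{-O(1)}$.

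The main obstacle I anticipate is the first reduction: converting the $L^2$-norm hypothesis on $\mu*\mu$ into a genuinely combinatorial small-tripling statement \emph{without} losing logarithmic factors in $N$, since the theorem demands clean polynomial dependence $K^{O(1)}$. The naive dyadic decomposition of $\mu$ into level sets introduces a $\log N$ loss; the fix is to use the graph/refinement form of BSG directly on the popular-difference set — i.e.\ run the Balog--Szemer\'edi--Gowers graph argument with the bipartite graph weighted by $\mu$, where the energy hypothesis guarantees many ``good'' paths of length two, and extract $A$ as the neighbourhood of a popular vertex. This is exactly the content of the nonabelian weighted BSG, so in the write-up I would either cite it or reproduce the short graph-theoretic argument in the appendix. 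The nonabelian bookkeeping (being careful about left vs.\ right cosets, and that $\mu*\mu$ rather than $\mu * \tilde\mu$ appears, which is harmless by symmetry) is routine but must be done carefully.
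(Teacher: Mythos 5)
Your overall architecture matches the paper's: reduce the energy hypothesis on $\mu$ to the combinatorial Balog--Szemer\'edi--Gowers theorem for a level set, pass from small doubling to an approximate group via Tao's lemma (giving $H\subseteq(A^{-1}A)^3\subseteq\mathrm{supp}(\mu)^6$ and $|H|\ll K^{O(1)}/\|\mu\|_{L^2}^2$), and then pigeonhole over a Ruzsa cover of $A$ by translates of $H$ to obtain $\mu(xH)\gg K^{-O(1)}$. You also correctly pinpoint the real obstacle: a naive dyadic decomposition of $\mu$ into level sets costs a $\log$ factor, which is incompatible with the clean $K^{O(1)}$ bounds demanded.

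The gap is that you flag this obstacle but do not actually resolve it. Your proposed fix --- ``pigeonhole on $\mu*\mu$ directly'' or ``appeal to a weighted BSG'' --- is either vague or circular: the weighted BSG you would cite is essentially a restatement of the theorem you are trying to prove, and reproducing its graph-theoretic proof from scratch is far heavier than what is needed here. The paper instead uses a short and self-contained truncation trick: with $M:=2^4K$ and $\delta:=M^{-2}$, write
\[
\mu=\nu_1+\nu_2+\nu_{\mathrm{str}}, \qquad
\nu_1=\mu\,\e_{\mu\geq M\|\mu\|_{L^2}^2},\quad
\nu_2=\mu\,\e_{\mu\leq \delta\|\mu\|_{L^2}^2},
\]
and observe that $\|\nu_1\|_{L^1}\leq 1/M$ (a Markov bound) while $\|\nu_2\|_{L^2}\leq \|\mu\|_{L^2}/M$ (Cauchy--Schwarz on the low values). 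Young's inequality then kills every cross term in $\|\mu*\mu\|_{L^2}$ involving $\nu_1$ or $\nu_2$, leaving $\|\nu_{\mathrm{str}}*\nu_{\mathrm{str}}\|_{L^2}\geq (2K)^{-1}\|\mu\|_{L^2}$. Crucially, $\nu_{\mathrm{str}}$ takes values in a window of multiplicative width $M/\delta=K^{O(1)}$, so on $A:=\mathrm{supp}(\nu_{\mathrm{str}})$ one has the pointwise comparison $\mu_A\asymp_{K^{O(1)}}\nu_{\mathrm{str}}$. This converts the measure hypothesis into the set-energy hypothesis $\|\e_A*\e_A\|_{L^2}^2\gg K^{-O(1)}|A|^3$ in a single step, with no dyadic sum and no logarithmic loss. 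You need this specific two-sided truncation --- not just one level set --- to make the reduction to Fact~\ref{fact: BSG for sets} go through with polynomial dependence on $K$, and it is the one genuinely missing idea in your plan.

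Two smaller points: you write $H\subseteq (A\cup A^{-1})^3$, but Tao's lemma gives $H\subseteq (A^{-1}A)^3$ (products of length $6$, not $3$), which is the source of the exponent $6$ in conclusion (2). Also note that the symmetry hypothesis $\tilde\mu=\mu$ is used precisely to identify $\mathrm{supp}(\mu)^{-1}=\mathrm{supp}(\mu)$ so that $(A^{-1}A)^3\subseteq\mathrm{supp}(\mu)^6$, not merely to pass between $\mu*\mu$ and $\mu*\tilde\mu$.
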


We will also use the following version of the product theorem which is proven recently by Eberhard--Murphy--Pyber--Szab\'o~\cite{ProductThm}. The statement for field of characteristic $0$ was proven by~\cite{BGT,PS}. 
\begin{fact}\label{fact: product-theorem}
Let $\mathsf{K}$ be a field, $G\leq \GL_n(\mathsf{K})$, and $A\subseteq G$ a finite $K$-approximate group. Then there are $D\lhd \Gamma\lhd \langle A\rangle$ such that 
\begin{enumerate}
    \item $A$ can be covered by $K^{O_n(1)}$ translates of $\Gamma$;
    \item $D$ is contained in $A^{O_n(1)}$;
    \item $\Gamma/D$ is a nilpotent group of step at most $n-1$. 
\end{enumerate}
Moreover, when $\mathrm{char}(\mathsf{K})=0$, $D$ is trivial. 
\end{fact}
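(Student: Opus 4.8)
The plan is to prove Fact~\ref{fact: product-theorem}, which is the Breuillard--Green--Tao / Pyber--Szab\'o product theorem in its structural form; since this is stated as a cited fact, what I would actually do is explain how it is assembled from the literature rather than give an independent proof. First I would reduce to the case where $\langle A\rangle$ is Zariski-dense in its Zariski closure $\mathbf{G}$, which is a connected-plus-finite algebraic group; passing to the connected component costs only a bounded index (bounded in terms of $n$, by a Jordan-type bound on the component group), so up to replacing $A$ by $A^2 \cap \mathbf{G}^\circ$ we may assume $\mathbf{G}$ is connected. The key dichotomy is then: either $\mathbf{G}$ is solvable, or $\mathbf{G}$ has a nontrivial semisimple quotient. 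In the solvable case one uses the fact that a connected solvable linear algebraic group is nilpotent-by-(diagonalisable torus), and more carefully one extracts the desired normal $D \lhd \Gamma$ with $\Gamma/D$ nilpotent of step $\le n-1$ directly from the derived series, noting that the derived length and nilpotency step of a solvable subgroup of $\GL_n$ are bounded by $n-1$ (Zassenhaus / Mal'cev). In positive characteristic $D$ need not be trivial precisely because there are large unipotent-type or Borel-type pieces (e.g. the Heisenberg-style examples implicit in Example~2 of the introduction) that are themselves already approximate-group-like and cannot be shrunk; this is why $D$ is only required to sit inside $A^{O_n(1)}$ rather than to be trivial.

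In the non-solvable case, the engine is the product theorem for quasisimple (or more generally perfect central extensions of simple) algebraic groups over finite fields: if $A$ generates such a group $G(\FF_q)$ and is not contained in a proper subgroup, then either $A$ is already comparable to all of $G$ (so $A^3 = G$) or $|A^3| \ge |A|^{1+\varepsilon}$ for $\varepsilon = \varepsilon(n) > 0$. This is Helfgott's theorem for $\SL_2, \SL_3$ and its generalisation by Pyber--Szab\'o and Breuillard--Green--Tao to all finite simple groups of bounded rank. The way this yields the structural statement for a general $A \subseteq \GL_n(\mathsf{K})$ is the standard ``escape from subvarieties plus pivot'' argument: one builds a descending chain of algebraic subgroups cutting $\mathbf{G}$ down to a Borel or to a quasisimple Levi factor, at each stage either gaining growth or trapping $A$ inside a smaller subgroup; the chain has length $O_n(1)$, so the bounded number of covering translates $K^{O_n(1)}$ and the bounded power $A^{O_n(1)}$ both survive. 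Over characteristic $0$ the non-solvable case is handled instead by Breuillard--Green--Tao's approximate-subgroup classification (or, at this level of generality, by the fact that a Zariski-dense approximate subgroup of a semisimple group over $\CC$ must be finite and hence controlled by a bounded-index subgroup, via a Jordan-type argument), which forces $D$ trivial.

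The main obstacle --- and the reason this result was genuinely hard and only recently completed in full generality by Eberhard--Murphy--Pyber--Szab\'o --- is the uniformity in the field: one needs the growth exponent $\varepsilon$ and all the implied constants to depend only on $n$, not on $q$, not on whether $q$ is prime versus a prime power, and not on the specific finite simple group arising. Handling $\FF_{p^k}$ for $k > 1$ is subtle because subfield subgroups $G(\FF_{p^j})$ for $j \mid k$ are exactly the obstructions to growth and must be folded into the notion of ``proper subgroup'' in the hypothesis; this is why the hypothesis of the overall theorems in this paper quantifies over \emph{all} proper algebraic subgroups $H$ with a nilpotent-by-$D$ quotient rather than just a fixed list. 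In this paper, however, Fact~\ref{fact: product-theorem} is simply imported as a black box, so for the purposes of the present argument no new proof is supplied: I would cite~\cite{ProductThm} for the positive-characteristic case and \cite{BGT,PS} for characteristic $0$, and move on to deducing Theorems~\ref{thm: main1} and~\ref{thm: main} from it together with the flattening lemma of Section~2.
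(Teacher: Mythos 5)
Your proposal correctly identifies that the paper does not prove Fact~\ref{fact: product-theorem} at all: it is imported verbatim as a black box, citing Eberhard--Murphy--Pyber--Szab\'o for arbitrary fields and Breuillard--Green--Tao and Pyber--Szab\'o for characteristic zero, exactly as you say at the end. So your approach matches the paper's. The extended sketch you give of how the cited result is actually assembled (Zariski closure reduction, solvable vs.\ semisimple dichotomy, Helfgott-type growth in bounded rank, escape from subvarieties, subfield subgroups as the obstruction in $\FF_{p^k}$) is a reasonable survey of the literature, but it is entirely supplementary; the paper offers no such sketch, and nothing in it is needed for the rest of the paper's argument.
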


Let us finally remark that as $\mathrm{PGL}_n(\mathsf{K})$ is a subgroup of $\mathrm{GL}_{n^2}(\mathsf{K})$, Fact~\ref{fact: product-theorem} applies naturally for finite approximate groups of $\mathrm{PGL}_n(\mathsf{K})$.

\subsection{$L^2$-flattening}
We are going to prove the following theorem.
\begin{theorem}\label{thm: l2 flattening} 
For any $n\in\mathbb{N}^{>0}$, there is $c>0$ and $N=N(n)$ such that the following holds for any $K>0$. 
Suppose $G$ is a subgroup of $\mathrm{GL}_n(\mathsf{K})$ or $\mathrm{PGL}_n(\mathsf{K})$ for some field $\mathsf{K}$. Suppose $\mu$ is a symmetric finitely supported probability measure on $G$. 
Let $\mathcal{D}:=\{D\leq G: D\subseteq \mathrm{supp}(\mu)^N\}$ when $\mathrm{char}(\mathsf{K})=p>0$ and let $\mathcal{D}:=\{\{\mathrm{id}\}\}$ when $\mathrm{char}(\mathsf{K})=0$.

Suppose (*)
%(Non-concentration on nilpotent by finite groups) 
for every subgroup $\Gamma\leq G$, if there is $D\lhd \Gamma$ so that $\Gamma/D$ is nilpotent of step at most $n-1$, and $D\in\mathcal{D}$, then
    $\mu(x\Gamma)\leq K^{-1}$ for all $x\in G$. 
 
Or suppose the following holds:
\begin{enumerate}
    \item (Non-concentration) For every proper subgroup $\Gamma\leq G$, if there is $D\lhd \Gamma$ so that $\Gamma/D$ is nilpotent of step at most $n-1$, and $D\in\mathcal{D}$, then
    $\mu(x\Gamma)\leq K^{-1}$ for all $x\in G$; 
    \item (Non-nilpotent by finite) For every $D\lhd G$ with $|D|< |G|/K$ and $D\in\mathcal{D}$, $G/D$ is not a nilpotent group of step at most $n-1$; 
    \item (Non-uniform) If $G$ is finite, then $\|\mu\|_{L^2(G)}>K|G|^{-1/2}$. 
\end{enumerate}
Then we have
\[
\|\mu*\mu\|_{L^2(G)}\ll K^{-c}\|\mu\|_{L^2(G)}.
\]
\end{theorem}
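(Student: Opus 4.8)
The plan is to argue by contradiction, following the Bourgain--Gamburd strategy. Suppose $\|\mu*\mu\|_{L^2(G)}>K^{-c}\|\mu\|_{L^2(G)}$ for a small $c=c(n)>0$ to be fixed later. Since $\|\mu*\mu\|_{L^2(G)}\le\|\mu\|_{L^2(G)}$ always holds, the range of small $K$ is harmless and we may assume $K$ is larger than any constant depending on $n$. First I would apply the Balog--Szemer\'edi--Gowers theorem (Theorem~\ref{thm: BSG}) with parameter $\approx K^{c}$: this yields a $K^{O(c)}$-approximate subgroup $H$ of $G$ with
\[
|H|\ll K^{O(c)}\|\mu\|_{L^2(G)}^{-2},\qquad H\subseteq\mathrm{supp}(\mu)^{6},\qquad \mu(x_0H)\gg K^{-O(c)}
\]
for some $x_0\in G$.

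Next I would run the product theorem (Fact~\ref{fact: product-theorem}) on $H$, viewed in $\mathrm{GL}_n(\mathsf K)$ (or in $\mathrm{GL}_{n^2}(\mathsf K)$ when $G\le\mathrm{PGL}_n(\mathsf K)$). This produces $D\lhd\Gamma\lhd\langle H\rangle\le G$ with $\Gamma/D$ nilpotent of step at most $n-1$, with $D\subseteq H^{O_n(1)}\subseteq\mathrm{supp}(\mu)^{6\cdot O_n(1)}$, with $H$ covered by $K^{O_n(c)}$ translates of $\Gamma$, and with $D=\{\mathrm{id}\}$ when $\operatorname{char}(\mathsf K)=0$. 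Choosing $N=N(n)$ larger than $6\cdot O_n(1)$ guarantees $D\in\mathcal D$ regardless of characteristic. Covering $x_0H$ by the corresponding $K^{O_n(c)}$ translates of $\Gamma$ and pigeonholing against $\mu(x_0H)\gg K^{-O(c)}$ produces $y\in G$ with $\mu(y\Gamma)\gg K^{-O_n(c)}$; for $c$ small and $K$ large this gives $\mu(y\Gamma)>K^{-1}$.

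I would then split on whether $\Gamma$ is all of $G$. If $\Gamma$ is a proper subgroup of $G$, then $\Gamma$ is a proper subgroup carrying a normal $D\in\mathcal D$ with $\Gamma/D$ nilpotent of step at most $n-1$, so $\mu(y\Gamma)>K^{-1}$ directly contradicts hypothesis (*), respectively the non-concentration condition~(1). If $\Gamma=G$, then $G/D$ is nilpotent of step at most $n-1$ with $D\in\mathcal D$: under (*) this is already impossible, as (*) applied to $G$ would force $1=\mu(G)\le K^{-1}$; under the itemized hypotheses, the non-nilpotent-by-finite condition~(2) forbids $|D|<|G|/K$ — this also disposes of the case $G$ infinite, where $D$ is forced to be finite — hence $|G|/K\le|D|\le|H^{O_n(1)}|\le K^{O_n(c)}|H|\ll K^{O_n(c)}\|\mu\|_{L^2(G)}^{-2}$, which rearranges to $\|\mu\|_{L^2(G)}\ll K^{(1+O_n(c))/2}|G|^{-1/2}$ and contradicts the non-uniform condition~(3) once $c$ is small and $K$ large. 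In every case we obtain a contradiction, completing the proof.

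The step I expect to need the most care is the exponent bookkeeping. It is essential that the Balog--Szemer\'edi--Gowers parameter be taken to be $K^{c}$ rather than $K$, so that the approximate-group quality $K^{O(c)}$ of $H$, the number $K^{O_n(c)}$ of $\Gamma$-cosets meeting $H$, and the resulting mass $K^{-O_n(c)}$ of $\mu$ on a single $\Gamma$-coset all stay well above the threshold $K^{-1}$ coming from non-concentration; at the same time, in the $\Gamma=G$ branch the loss $K^{O_n(c)}$ in passing from $|H|$ to $|D|$ must be outweighed by the single genuine power of $K$ in the non-uniform hypothesis. Both requirements are met by taking $c=c(n)$ small enough that each relevant $O_n(\cdot)$ constant times $c$ is below $\tfrac12$. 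A minor, routine point is to keep track of the handedness of the coset covering supplied by the product theorem so that it is compatible with how $\mu(y\Gamma)$ is computed, and to note that $D\in\mathcal D$ is valid both when $\operatorname{char}(\mathsf K)>0$ (as $D\subseteq\mathrm{supp}(\mu)^N$) and when $\operatorname{char}(\mathsf K)=0$ (as then $D=\{\mathrm{id}\}$ and $\mathcal D=\{\{\mathrm{id}\}\}$).
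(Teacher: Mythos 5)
Your proposal matches the paper's proof step for step: BSG with parameter $K^{c}$ to get a $K^{O(c)}$-approximate group $H\subseteq\mathrm{supp}(\mu)^6$ with $\mu(x_0H)\gg K^{-O(c)}$, the product theorem to extract $D\lhd\Gamma$ with $\Gamma/D$ nilpotent of step at most $n-1$ and $D\subseteq H^{O_n(1)}$ (whence $N=N(n)$), a pigeonhole to push mass onto a single $\Gamma$-coset, and then the split on $\Gamma=G$ versus $\Gamma\ne G$, with the $\Gamma=G$ branch handled exactly as you describe via hypotheses (2) and (3) and the inequality $|G|/K\le|D|\ll K^{O_n(c)}\|\mu\|_{L^2(G)}^{-2}$. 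The exponent bookkeeping you flag is precisely what the paper resolves by taking $c<\min\{1/(C_2+C_4),\,1/(C_3+C_1C_5)\}$, so your plan is correct and essentially identical.
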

\begin{proof}
Suppose $\|\mu*\mu\|_{L^2(G)}\gg K^{-c}\|\mu\|_{L^2(G)}$ for some $c>0$ to be determined later. By the Balog--Szemer\'edi--Gowers theorem (Theorem~\ref{thm: BSG}) there are constants $C_1,C_2,C_3>0$ such that 
\begin{itemize}
    \item there is a $K^{cC_1}$-approximate group $H$;
    \item there is $x\in G$ such that $\mu(xH)\gg K^{-cC_2}$;
    \item $|H|\ll K^{cC_3}/\|\mu\|_{L^2(G)}^2$;
    \item $H\subseteq \mathrm{supp}(\mu)^6$. 
\end{itemize}

Using the product theorem (Fact~\ref{fact: product-theorem}), we obtained groups $D,\Gamma$ and constants $C_4,C_5>0$ only depending on $n$ such that 
\begin{itemize}

    \item $D\lhd \Gamma \lhd \langle H\rangle$; 
    \item $H$ can be covered by $K^{cC_4}$ translates of $\Gamma$;
    \item $D$ is contained in $H^{C_5}$; 
    \item $\Gamma/D$ is a nilpotent group of step at most $n-1$;
    \item $D=\{\mathrm{id}\}$ when $\mathrm{char}(\mathsf{K})=0$.
\end{itemize}

Note that $D\subseteq H^{C_5}\subseteq \mathrm{supp}(\mu)^{6C_5}$. Let $N:=6C_5$. Then $\mu(xH)\leq\sum_{i\leq N}\mu(xg_i\Gamma)$ where $g_i\in G$ and $N\leq K^{cC_4}$. Then by the pigeon-hole principle there is $i\leq N$ such that
\[
K^{cC_4}\mu(xg_i\Gamma)\geq \mu(xH) \gg K^{-cC_2}.
\]
Thus when $c<1/(C_2+C_4)$, we conclude that $\mu(xg_i\Gamma)\gg K^{-1}$, which contradicts (*) and the non-concentration assumption (1) if $\Gamma\neq G$.

Therefore, we only need to consider the case when $\Gamma=G$ with assumptions (2) and (3). 

If $G$ is infinite, then $|D|< |G|/K$ since $D$ is finite. We get $G/D$ is not nilpotent of step at most $n-1$, a contradiction. 

If $G$ is finite, then $\|\mu\|_{L^2(G)}>K|G|^{-1/2}$ by assumption (3). Hence, we have
\[
|H|\ll \frac{K^{cC_3}}{K^2}|G|.
\]
By assumption (2), we must have $|D|\geq |G|/K$. Thus, 
\[
|G|\leq K|D|\leq K|H^{C_5}|\leq K^{1+cC_1C_5}|H|\ll K^{c(C_3+C_1C_5)-1}|G|,
\]
and this is impossible if we choose $c<1/(C_3+C_1C_5)$. 

In conclusion, we have proved the theorem with $c<\min\{1/(C_3+C_1C_5),1/(C_2+C_4)\}$ and $N=6C_5$.
\end{proof}

\section{Proofs of the group-action Szemer\'edi--Trotter theorem}\label{sec: group action}
In this section, we prove Theorem~\ref{thm: main1} and Theorem~\ref{thm: main} together. Let's first work with the common assumptions of these two theorems. Let $G$ be a subgroup of $\mathrm{PGL}_{n}(\mathsf{K})$ or $\mathrm{GL}_{n}(\mathsf{K})$ for some field $\mathsf{K}$ and that $G$ acts on some set $T$. Let $S\subseteq G$ and $X\subseteq T$ be finite subsets with $G=\langle S\rangle$.
Suppose 
\begin{equation}\label{ass: r}
    1\ll \log|X|<r\log|S|,
\end{equation}
 and
\begin{equation}\label{ass: Stab}
  |\{(x_1,\ldots,x_k):\mathrm{Stab}_G(x_1,x_2,..,x_k)\neq \mathrm{id}_G\}|\ll |X|^{k-\varepsilon_3}.
\end{equation}

Let $\delta>0$, and assume that we have \[|\{(x,y,g):x,y\in X,g\in S, x=gy\}|\geq|X|^{1-\delta}|S|,\] namely
\[\sum_{g\in G}\mu_S(g)|X\cap gX|\geq |X|^{1-\delta},\]
 where $\mu_S$ is the counting measure supported on $S$. We want to arrive at a contradiction for some carefully chosen $\delta$ with the further assumptions of Theorem~\ref{thm: main1} and Theorem~\ref{thm: main}.
 
Note that $X^{\times k}\cap gX^{\times k}=(X\cap gX)^{\times k}$ (recall that $X^{\times k}$ is the Cartesian product of $X$).
Hence by Jensen's inequality we have 
\begin{equation}\label{eq: assumption}
\sum_{g\in G}\mu_S(g)|X^{\times k}\cap gX^{\times k}|=\sum_{g\in G}\mu_S(g)|X\cap gX|^k\geq \left(\sum_{g\in G}\mu_S(g)|X\cap gX|\right)^k \geq |X^{\times k}|^{1-\delta}. 
\end{equation}
Let $
\widetilde{X}=\{x\in X^{\times k}\mid \mathrm{Stab}_G(x)=\mathrm{id}_G\}
$ and $
\widetilde{T}=\{x\in T^{\times k}\mid \mathrm{Stab}_G(x)=\mathrm{id}_G\}. 
$
%Hence we have $\widetilde{X}\subseteq\widetilde{V}$. 

Note that $g\widetilde{T}=\widetilde{T}$ and $g(T^k\setminus \widetilde{T})=T^k\setminus \widetilde{T}$ for all $g\in G$. Hence, \[X^{\times k}\cap gX^{\times k}=(\widetilde{X}\cap g\widetilde{X})\cup((X^{\times k}\setminus \widetilde{X})\cap g(X^{\times k}\setminus \widetilde{X})).\]
Thus by \eqref{eq: assumption} and (\ref{ass: Stab}) we get
\begin{equation}\label{eq: assumption for X tilde}
  \sum_{g\in G}\mu_S(g)|\widetilde{X}\cap g\widetilde{X}|\gg |X^{\times k}|^{1-\delta}-   |X|^{k-\varepsilon_3}\gg |\widetilde{X}|^{1-\delta}
\end{equation}
as long as $
\delta<\varepsilon_3/k. $

On the other hand, we note that
\begin{align*}
  \sum_{g\in G}\mu_S(g)|\widetilde{X}\cap g\widetilde{X}|=\sum_{g\in G}\mu_S(g)\sum_{x\in \widetilde{T}}\e_{\widetilde{X}}(x)\e_{\widetilde{X}}(g^{-1}x)=\sum_{x\in \widetilde{T}}\e_{\widetilde{X}}(x)\left(\sum_{g\in G}\mu_S(g)\e_{\widetilde{X}}(g^{-1}x)\right). 
\end{align*}
Therefore by the Cauchy--Schwarz inequality we have
\begin{align*}
  \left(\sum_{g\in G}\mu_S(g)|\widetilde{X}\cap g\widetilde{X}|\right)^2\leq |\widetilde{X}|\sum_{g_1,g_2\in G}\sum_{x\in \widetilde{T}}\mu_S(g_1)\mu_S(g_2)\e_{\widetilde{X}}(g_1^{-1}x)\e_{\widetilde{X}}(g_2^{-1}x). 
\end{align*}
By a change of variable, the right hand side of the above inequality can be rewritten as
\begin{align*}
    &|\widetilde{X}|\sum_{g_1,g_2\in G}\mu_S(g_1)\mu_S(g_2)|g_1\widetilde{X}\cap g_2\widetilde{X}|=|\widetilde{X}|\sum_{g_1\in G}\mu_S(g_1)\sum_{g_2\in G}\mu_S(g_2)|\widetilde{X}\cap g_1^{-1}g_2\widetilde{X}|\\
    &=|\widetilde{X}|\sum_{g_1\in G}\mu_S(g_1)\sum_{g}\mu_S(g_1g)|\widetilde{X}\cap g\widetilde{X}|
    =|\widetilde{X}|\sum_{g\in G}\sum_{g_1\in G}\mu_S(g_1)\mu_S(g_1g)|\widetilde{X}\cap g\widetilde{X}|
    \\
    &=
    |\widetilde{X}|\sum_g \widetilde{\mu_S}*\mu_S(g) |\widetilde{X}\cap g\widetilde{X}|,
\end{align*}

where $\widetilde{\mu_S}(g)=\mu_S(g^{-1})$.
Together with \eqref{eq: assumption for X tilde} we have
\[
\sum_{g\in G}\widetilde{\mu_S}*\mu_S(g) |\widetilde{X}\cap g\widetilde{X}|\geq |\widetilde{X}|^{1-2\delta}. 
\]
For even $m$, we write $\mu_S^{(m)}$ for the $m$-fold symmetric convolution $\widetilde{\mu_S}*\mu_S*\mu_S^{(m-2)}$. Note that  $\widetilde{\mu_S}*\mu_S$ is symmetric, so is $\mu_S^{(m)}$. And $\mu_S^{(m)}$ is supported on $(S^{-1}S)^{m/2}$.
%namely $\mu_S^{(2^m)}=\widetilde{\mu_S^{(2^{m-1})}}*\mu_S^{(2^{m-1})}$. 
By induction, we conclude that
\begin{equation}\label{eq: n-fold convolution}
\sum_{g\in G}\mu_S^{(2^m)}(g) |\widetilde{X}\cap g\widetilde{X}|\geq |\widetilde{X}|^{1-2^m\delta}. 
\end{equation}

Now we consider the following set $\Omega_t$ in $G$ such that each element in $\Omega_t$ makes $\widetilde{X}$ almost invariant. More precisely, for some $t<1$ to be determined later, we define
\[
\Omega_t=\Big\{g\in G : |g\widetilde{X}\cap \widetilde{X}|>\frac{1}{2}|\widetilde{X}|^{1-t}\Big\}. 
\]
We will see that $\Omega_t$ cannot be too large. Indeed, consider 
\[
\sum_{g\in\Omega_{t}}\sum_{x\in \widetilde{X}}\e_{\widetilde{X}}(g^{-1}x). 
\]
By the definition of $\Omega_{t}$ the above quantity is lower bounded by $\frac{1}{2}|\Omega_{t}||\widetilde{X}|^{1-t}$. 
On the other hand by the definition of $\widetilde{X}$, for every $x\in \widetilde{X}$ and $y\in \widetilde{X}$ there is a unique $g\in G$ such that $gx=y$. Therefore,
\[
\frac{1}{2}|\Omega_{t}||\widetilde{X}|^{1-t}\leq \sum_{g\in\Omega_{t}}\sum_{x\in \widetilde{X}}\e_{\widetilde{X}}(g^{-1}x)\leq \sum_{x\in \widetilde{X}}\sum_{g\in G} \e_{\widetilde{X}}(g^{-1}x)=|\widetilde{X}|^2
\]
and hence 
\begin{equation}\label{eq: upper bound Omega}
    |\Omega_{t}|\leq 2|\widetilde{X}|^{1+t}.
\end{equation}

We will show that, because of \eqref{eq: n-fold convolution}, $\Omega_m:=\Omega_t\cap\mathrm{supp}(\mu_S^{(2^m)})$ is large for a carefully chosen $t$ and $m$. Note that for every $m\geq 1$ we have
\[
\sum_{g\in \Omega_t}\mu_S^{(2^m)}(g) |\widetilde{X}\cap g\widetilde{X}|=\sum_{g\in G}\mu_S^{(2^m)}(g) |\widetilde{X}\cap g\widetilde{X}| - \sum_{g\notin \Omega_t}\mu_S^{(2^m)}(g) |\widetilde{X}\cap g\widetilde{X}|.
\]
By the definition of $\Omega_t$,
%the second term of the right hand side of the above equality is at most
\[
\sum_{g\notin \Omega_t}\mu_S^{(2^m)}(g) |\widetilde{X}\cap g\widetilde{X}|\leq\frac{1}{2}|\widetilde{X}|^{1-t}\sum_{g\in G} \mu_S^{(2^m)}(g)=\frac{1}{2}|\widetilde{X}|^{1-t}. 
\]
Thus we have that
\begin{equation}\label{eq: sum over Omega}
    \sum_{g\in \Omega_t}\mu_S^{(2^m)}(g) |\widetilde{X}\cap g\widetilde{X}| \geq \frac{1}{2}|\widetilde{X}|^{1-t}
\end{equation}
whenever
$2^m\delta\leq t.$

Let $\mathcal{D}:=\{D\subseteq (S^{-1}S)^{N}, D\leq G\}$ if $\mathrm{char}(K)>0$ and $\mathcal{D}=\{\{\mathrm{id}\}\}$ if $\mathrm{char}(\mathsf{K})=0$, and let
$\mathcal{H}:=\{H\leq G: \text{ there is } D\in\mathcal{D}\text{ such that } H/D \text{ nilpotent of step at most } n-1;\}$

Now we have two set of assumptions: the first set coming from
Theorem~\ref{thm: main}:
\begin{equation}\label{ass: theorem 1.3}
   |S\cap gH|<|S|^{1-\varepsilon_2} \text{ for any }H\in\mathcal{H}. 
\end{equation}

And the second set of assumptions coming from those of Theorem~\ref{thm: main1}:
 \begin{align}
\label{ass: theorem 1.2 1}& 1\ll|X|<|G|^{\frac{1}{k}-\varepsilon_1}\text{ if } G \text{ is finite};\\ 
\label{ass: theorem 1.2 2}&|S\cap gH|<|S|^{1-\varepsilon_2} \text{ for any }H\in\mathcal{H}\text{ and }H\neq G; \\
\label{ass: theorem 1.2 3}&G/D \text{ is not nilpotent of step}\leq n-1, \text{ for any }D\in\mathcal{D}, D\trianglelefteq G\text{ and } |D|\leq |G|/|S|^{\varepsilon_2}.     
 \end{align}

We need to show that under assumption (\ref{ass: theorem 1.3}) or assumptions (\ref{ass: theorem 1.2 1}, \ref{ass: theorem 1.2 2}, \ref{ass: theorem 1.2 3}) we get contradictions for a carefully chosen $\delta>0$.

We choose some $\varepsilon_4\leq \varepsilon_2$. 
Note that for any subgroup $H\leq G$, if we have $|S\cap gH|<|S|^{1-\varepsilon_2}\leq |S|^{1-\varepsilon_4}$, then this can be rewritten as
$
\mu_S(gH)<|S|^{-\varepsilon_4}. 
$
Hence for $m\geq 1$, we have 
\begin{align}\label{ineq-cond3}
    \mu^{(2^m)}_S(gH)&=\mu^{(2^m-1)}_S*\mu_S(gH)
    =\sum_{x\in gH}\sum_{y\in G}\mu^{(2^m-1)}_S(y)\mu_S(y^{-1} x)\nonumber \\
    &=\sum_{y\in G}\mu^{(2^m-1)}_S(y)\sum_{x\in gH} \mu_S(y^{-1} x) =\sum_{y\in G}\mu^{(2^m-1)}_S(y)\mu_S(y^{-1}gH)<|S|^{-\varepsilon_4}.
\end{align}

We apply Theorem~\ref{thm: l2 flattening} with $\mu_S^{(2^{m})}$ and $|S|^{\varepsilon_4}$. Let $N_0=N(n)$ given by Theorem~\ref{thm: l2 flattening}.  By (\ref{ineq-cond3}),
under the assumption (\ref{ass: theorem 1.3}), we must have $L^2$-flattening for $\mu_S^{(2^{m})}$ for any $m\geq 1$ with $2^mN_0\leq N$ (since $\mathrm{supp}(\mu_S^{(2^{m+1})})\subseteq (S^{-1}S)^{2^{m}}$), namely
    \begin{equation}
    \Vert \mu_S^{(2^{m+1})}\Vert_{L^2(G)} \ll |S|^{-c\varepsilon_4}\Vert\mu_S^{(2^{m})}\Vert_{L^2(G)} \ll |S|^{-cm\varepsilon_4} \Vert \widetilde{\mu}_S*\mu_S\Vert_{L^2(G)}.
\end{equation}
where $c=c(n)$ is the constant obtained from Theorem~\ref{thm: l2 flattening} only depending on $n$.\footnote{Note that we use induction of step $m$, therefore, $\Vert \mu_S^{(2^{m})}\Vert_{L^2(G)} \leq O(1)^m|S|^{-cm\varepsilon_4} \Vert \widetilde{\mu}_S*\mu_S\Vert_{L^2(G)}$.}

Note that by H\"older's inequality, and the fact that $\widetilde{\mu}_S*\mu_S$ is a probability measure,
 \[
 \Vert \widetilde{\mu}_S*\mu_S\Vert_{L^2(G)}\leq \max_x (\widetilde{\mu}_S*\mu_S(x))^{\frac12} \Vert \widetilde{\mu}_S*\mu_S\Vert_{L^1(G)}^{\frac12} = |S|^{-1}\max_x |S\cap Sx|^{\frac12}\leq |S|^{-\frac12}.
 \]
 Hence, 
 \begin{equation}\label{eq: l2 of mu_S}
   \Vert \mu_S^{(2^{m+1})}\Vert_{L^2(G)}\ll |S|^{-(cm\varepsilon_4+1/2)}.   
 \end{equation}

Also note that using Cauchy--Schwarz we obtain the following bound
\begin{equation}\label{LinfL2}
    \| \mu_S^{(2^{m+1})}\|_{L^\infty(G)}=\sup_x \sum_y \mu_S^{(2^{m})}(y)\mu_S^{(2^{m})}(y^{-1}x)\leq \|\mu_S^{(2^{m})}\|_{L^2(G)}^2.
\end{equation}

%Let $\ell=O_{r,k,n,\varepsilon_1,\varepsilon_2}(1)$ to be determined later.  We first assume that for $j\leq \ell$, $\mu_S^{(2^j)}$ exhibits flattening. That is, \textcolor{red}{where is 2?}
Thus, by \eqref{eq: l2 of mu_S} and \eqref{ass: r},
\[
\|\mu_S^{(2^{m+1})}\|_{L^\infty(G)}\leq\|\mu_S^{(2^{m})}\|_{L^2(G)}^2\ll |S|^{-(cm\varepsilon_4+1/2)}\ll  |X|^{-\frac{cm\varepsilon_4+1/2}{r}}.
\]
Now by \eqref{eq: sum over Omega}, when $2^{m+1}\delta< t$,
\begin{equation*}
|\widetilde{X}||\Omega_{t}| \|\mu_S^{(2^{m+1})}\|_{L^\infty(G)}\geq \sum_{g\in \Omega_t}\mu_S^{(2^{m+1})}(g) |\widetilde{X}\cap g\widetilde{X}|\gg |\widetilde{X}|^{1-t}.
\end{equation*}
and together with the trivial bound that $|X|^k\geq|\widetilde{X}|$, 
\begin{equation}\label{eq: lower bound on Omega}
|\Omega_{t}|\gg  |X|^{\frac{cm\varepsilon_4+1/2}{r}}|\widetilde{X}|^{-t}\gg |\widetilde{X}|^{2-t+\frac{1}{2rk}},
\end{equation}
provided that $m\geq \frac{2kr}{c\varepsilon_4}$ and $|\widetilde{X}|\gg1$. Choose
\begin{equation}\label{eq: restriction on l}
m= \left\lceil\frac{2kr}{c\varepsilon_4}\right\rceil,
\end{equation}
then the lower bound \eqref{eq: lower bound on Omega} and the upper bound on $\Omega_t$ \eqref{eq: upper bound Omega} are contradictory whenever we choose $t<1/2$. 

Here we conclude the proof of Theorem~\ref{thm: main}, with $\varepsilon_4:=\varepsilon_2$ and \[\delta<\min\{\frac{\varepsilon_3}{k}, 2^{-\lceil\frac{2kr}{c\varepsilon_2}\rceil-2}\}\text{ and } N:=2^{\lceil\frac{2kr}{c\varepsilon_2}\rceil} N_0.\]

Now we proceed to the proof of Theorem~\ref{thm: main1}, namely with assumptions (\ref{ass: theorem 1.2 1}, \ref{ass: theorem 1.2 2}, \ref{ass: theorem 1.2 3}). Let $\varepsilon_4:=\min\{\frac{k\varepsilon_1}{5},\varepsilon_2, (1-k\varepsilon_1)/2\}$. We apply Theorem~\ref{thm: l2 flattening} with $\mu_S^{(2^{m})}$ and $|S|^{\varepsilon_4}$ again. By (\ref{ass: theorem 1.2 2}, \ref{ass: theorem 1.2 3}), for every $m\geq1$ with $2^mN_0\leq N$, we have one of the following,
\begin{enumerate}
    \item $L^2$-flattening for $\mu_S^{(2^{m})}$, i.e.,
    $\Vert \mu_S^{(2^{m+1})}\Vert_{L^2(G)} \ll |S|^{-c\varepsilon_4}\Vert\mu_S^{(2^{m})}\Vert_{L^2(G)}$;
    \item $\mu_S^{(2^{m})}$ is sufficiently uniform, that is
    \begin{equation}
\Vert \mu_S^{(2^{m})}\Vert_{L^2(G)}\ll |S|^{\varepsilon_4}|G|^{-1/2},
\end{equation}
\end{enumerate} 
Suppose $\mu_S^{(2^{i})}$ has $L^2$-flattening for all $i\leq m$, then by the same argument as before, we get a contradiction, when $m\geq \lceil 2kr/c\varepsilon_4\rceil$, $\delta<\min\{\varepsilon_3/k, 2^{-m-2}\}$ and $N\geq 2^mN_0$.

Otherwise, there is $i\leq m\leq 2kr/c\varepsilon_4$ such that  $\Vert \mu_S^{(2^{i})}\Vert_{L^2(G)}\ll |S|^{\varepsilon_4}|G|^{-1/2}$. Therefore, by (\ref{LinfL2}) and Young's inequality, \[\|\mu_S^{(2^{m+1})}\|_{L^\infty(G)}\leq \|\mu_S^{(2^{m})}\|_{L^2(G)}^2\leq \|\mu_S^{(2^{i})}\|_{L^2(G)}^2  \ll |S|^{2\varepsilon_4}/|G|.\]
Similarly by \eqref{eq: sum over Omega} we get $|\Omega_t|\gg |\widetilde{X}|^{-t}|G||S|^{-2\varepsilon_4}$ when $2^{m+1}\delta<t$ and by the upper bound on $\Omega_t$ \eqref{eq: upper bound Omega} and assumption \eqref{ass: theorem 1.2 1}, \[|G||S|^{-2\varepsilon_4}\ll |G|^{(1-k\varepsilon_1)(1+2t)},\] which is a contradiction whenever
$
t< \frac{k\varepsilon_1-2\varepsilon_4}{2-2k\varepsilon_1}. 
$
As $4\varepsilon_4< k\varepsilon_1$, we choose
\[
t=\frac{\varepsilon_4}{1-k\varepsilon_1}
.\]
Note that by our choice of $\varepsilon_4$, we have $2t<1$. Hence we choose $m=\lceil \frac{2kr}{c\varepsilon_4}\rceil$ and finished the proof of Theorem~\ref{thm: main1} with 
\[\delta = \min\Big\{\frac{\varepsilon_4}{(1-k\varepsilon_1)2^{\lceil\frac{2kr}{c\varepsilon_4}\rceil+1}},\frac{\varepsilon_3}{k}\Big\} \text{ and }N=2^{\lceil \frac{2kr}{c\varepsilon_4}\rceil}N_0.\]

\section{Non-collinear points from three planes}

We will follow the strategy used in \cite{BDZ} and identify collinear triples as group actions.
Throughout the section, we work in the projective three-space $\PP^3(\mathsf{K})$ for some field $\mathsf{K}$.
Given two points $a,b$ we use $\ell_{a,b}$ to denote the line passing through $a$ and $b$. Let $P_1,P_2$ be two distinct projective planes. Given any point $x\not\in P_1\cup P_2$, it induces a bijection $\eta_x:P_1\to P_2$ by sending $a\in P_1$ to the unique point $b:=\ell_{a,x}\cap P_2$. Note that $a=b$ when $a\in P_1\cap P_2$. Given $x,y\not\in P_1\cup P_2$, we have an action $\eta_{y}^{-1}\circ\eta_{x}:P_1\to P_1$. 

We have the following description of this action for all fields. 
\begin{lemma}
    The group generated by $\gamma_{x,y}:=\eta_{y}^{-1}\circ\eta_{x}$ for $x,y\not\in P_1\cup P_2$ can be naturally identified as $G_a(\mathsf{K})^2\rtimes G_m(\mathsf{K})$. 
\end{lemma}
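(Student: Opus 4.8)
The plan is to set up explicit projective coordinates adapted to the configuration and compute the action directly. First I would normalize: since $\mathrm{PGL}_4(\mathsf{K})$ acts transitively on suitable triples of planes (here we only need $P_1 \neq P_2$), choose homogeneous coordinates $[x_0:x_1:x_2:x_3]$ on $\mathbb{P}^3(\mathsf{K})$ so that $P_1 = \{x_3 = 0\}$ and $P_2 = \{x_0 = 0\}$; then $P_1 \cap P_2 = \{x_0 = x_3 = 0\}$ is the line $\{[0:s:t:0]\}$. Identify $P_1$ with $\mathbb{P}^2$ via $[x_0:x_1:x_2:0] \mapsto [x_0:x_1:x_2]$ and likewise $P_2 \cong \mathbb{P}^2$ via $[0:x_1:x_2:x_3]\mapsto [x_1:x_2:x_3]$ (reordering so the distinguished line corresponds to the same $\mathbb{P}^1$ in both). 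For a point $z = [z_0:z_1:z_2:z_3] \notin P_1 \cup P_2$ (so $z_0 \neq 0$ and $z_3 \neq 0$), I would write down the map $\eta_z: P_1 \to P_2$: given $a = [a_0:a_1:a_2:0]$, the line $\ell_{a,z}$ meets $\{x_0 = 0\}$ at the point $a_0 z - z_0 a = [\,0 : a_0 z_1 - z_0 a_1 : a_0 z_2 - z_0 a_2 : a_0 z_3\,]$.

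Next I would compute $\gamma_{x,y} = \eta_y^{-1} \circ \eta_x : P_1 \to P_1$ in these coordinates. A direct computation shows $\eta_x$ is (after the identifications) a projective-linear map $P_1 \to P_2$ whose matrix is upper-triangular-like: it fixes the common line $P_1 \cap P_2$ pointwise and is determined by $x$ up to scalar. Composing $\eta_y^{-1}$ with $\eta_x$ gives an element of $\mathrm{PGL}_3(\mathsf{K})$ fixing the line $P_1 \cap P_2$ pointwise and fixing the point $[1:0:0]$ (the intersection of $P_1$ with... more precisely one checks the induced map preserves the pencil of lines through a distinguished point and acts trivially on $P_1 \cap P_2$). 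Concretely, writing points of $P_1$ as $[u:v:w]$ with $P_1\cap P_2 = \{u=0\}$, the map $\gamma_{x,y}$ has the form $[u:v:w] \mapsto [u : v + \alpha u : w + \beta u]$ for some $\alpha, \beta \in \mathsf{K}$ depending on $x,y$ — wait, it also carries a nontrivial scaling: $[u:v:w]\mapsto [\lambda u : v + \alpha u : w + \beta u]$ for some $\lambda \in \mathsf{K}^*$, equivalently in an affine chart $v,w$ translate and $u/v$-type coordinate scales. The set of all such maps, as $(\alpha,\beta,\lambda)$ ranges over $\mathsf{K}\times\mathsf{K}\times\mathsf{K}^*$, is exactly a copy of $G_a(\mathsf{K})^2 \rtimes G_m(\mathsf{K})$ sitting inside $\mathrm{PGL}_3(\mathsf{K})$ (the stabilizer of a point together with a line not through it, or equivalently the "$ax+b$"-type group in two translation directions twisted by a common dilation). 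I would then verify the group law matches the semidirect product and that every such triple $(\alpha,\beta,\lambda)$ is realized by some $\gamma_{x,y}$ (surjectivity onto the parameter set), which together with closure identifies the generated group precisely.

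To finish I need two things: (i) the image of the map $(x,y) \mapsto \gamma_{x,y}$ generates all of $G_a^2 \rtimes G_m$, and (ii) it generates nothing more. For (ii), closure follows because each $\gamma_{x,y}$ visibly lies in the subgroup $\{[u:v:w]\mapsto[\lambda u: v+\alpha u: w+\beta u]\}$, which is itself a subgroup isomorphic to $G_a(\mathsf{K})^2\rtimes G_m(\mathsf{K})$; for (i) I would exhibit enough elements, e.g. by fixing $y$ and varying $x$ I get a full $G_a^2$-worth of translations, and choosing $x,y$ on opposite sides appropriately produces a nontrivial scaling $\lambda \neq 1$, and translations together with one nontrivial dilation generate the whole semidirect product (the dilation conjugates the translation subgroup onto scaled copies, and one checks the generated group is all of it). The main obstacle — really the only delicate point — is bookkeeping the projective normalizations carefully so that the coordinate form of $\gamma_{x,y}$ comes out cleanly and one sees the semidirect product structure transparently rather than as an opaque $3\times 3$ matrix; getting the identification $P_1 \cong P_2$ along the common line to be literally the identity is what makes the translation-by-$\alpha,\beta$ picture and the $G_m$-action manifest.
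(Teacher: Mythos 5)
Your proposal is correct and takes essentially the same approach as the paper: choose projective coordinates adapted to $P_1,P_2$, compute $\gamma_{x,y}$ explicitly, and read off that it has the form $[u:v:w]\mapsto[\lambda u:v+\alpha u:w+\beta u]$ with $(\alpha,\beta,\lambda)\in\mathsf{K}^2\times\mathsf{K}^*$, which is exactly the copy of $G_a(\mathsf{K})^2\rtimes G_m(\mathsf{K})$ fixing $P_1\cap P_2$ pointwise. The only slip is the passing remark that $\gamma_{x,y}$ fixes the point $[1:0:0]$ --- it does not (that point maps to $[\lambda:\alpha:\beta]$); what is invariant is the line $\{u=0\}=P_1\cap P_2$, which you correctly state elsewhere, and your surjectivity check is a welcome addition to the paper's rather terse computation.
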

\begin{proof}
    Apply a projective linear transformation, we may assume $P_1=\{[0:b:c:d]: b,c,d\in \mathsf{K}\}$ and $P_2=\{[a:0:c:d]: a,c,d\in \mathsf{K}\}$. Suppose $x=[a:1:c:d]$ and $y=[1:b':c':d']$, let $p=[0:\xi_1:\xi_2:\xi_3]$, then 
    \[
    (\eta_{y}^{-1}\circ \eta_{x})(p)=[0:b'a\xi_1:\xi_2+(c'a-c)\xi_1:\xi_3+(d'a-d)\xi_1].
    \]
    Namely, $\gamma_{x,y}$ corresponds to $(c'a-c,d'a-d,b'a)\in G_a(K)^2\rtimes G_m(K)$. And its action on $P_1$ is given by $(a,b,c)\star[0:\xi_1:\xi_2:\xi_3]=[0:c\xi_1:\xi_2+a\xi_1:\xi_3+b\xi_1]$.
\end{proof}

The next lemma relates the collinearity of three points to solutions to group actions. 

\begin{lemma}\label{lem: S}
Let $P_1,P_2$ be two planes, $X_i\subseteq P_i$ for $i=1,2$ and $X_3\subseteq \mathbb{P}^3(\mathsf{K})\setminus (P_1\cup P_2)$ with $|X_1|=|X_2|=|X_3|$. Let \[E:=\{(x_1,x_2,x_3)\in X_1\times X_2\times X_3: x_1,x_2,x_3\text{ distinct and collinear.}\}\]
Suppose $|E|\gg |X_1|^{2-\delta}$. Then there is a set $S\subseteq \{\gamma_{x,y}:x,y\in X_3\}$ with $|S|\gg |X_1|^{1-4\delta}$ such that \[
|\{sx=y: s\in S,x,y\in X_1\}|\gg |S||X_1|^{1-4\delta}.
\]
\end{lemma}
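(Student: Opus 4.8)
\textbf{Proof plan for Lemma~\ref{lem: S}.}

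The plan is to turn the collinearity triple count $|E|$ into a count of solutions to a group action by using the maps $\gamma_{x,y}=\eta_y^{-1}\circ\eta_x$. First I would observe that a triple $(x_1,x_2,x_3)\in X_1\times X_2\times X_3$ is collinear precisely when $x_2=\eta_{x_3}(x_1)$, i.e.\ $x_1,x_2$ lie on the line through $x_3$. So, after fixing a reference point --- or rather, by pairing up two elements of $X_3$ --- collinearity through $x_3$ and through $x_3'$ translates into $\gamma_{x_3,x_3'}(x_1)=x_1'$ for suitable $x_1,x_1'\in X_1$. Concretely, for each pair $x_3, x_3'\in X_3$, write $N(x_3)=|\{x_1\in X_1 : \eta_{x_3}(x_1)\in X_2\}|$, the number of collinear triples ``through $x_3$''; then $|E|=\sum_{x_3\in X_3}N(x_3)$, so by $|E|\gg |X_1|^{2-\delta}$ and $|X_3|=|X_1|$, the average of $N(x_3)$ is $\gg |X_1|^{1-\delta}$. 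A dyadic/Markov pigeonholing step then gives a subset $X_3'\subseteq X_3$ with $|X_3'|\gg |X_1|^{1-O(\delta)}$ on which $N(x_3)\gg |X_1|^{1-O(\delta)}$ simultaneously.

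Next I would count solutions to the group action. For $x_3,x_3'\in X_3'$ set $Y_{x_3}=\{x_1\in X_1: \eta_{x_3}(x_1)\in X_2\}$, so $|Y_{x_3}|\gg|X_1|^{1-O(\delta)}$, and similarly $Y_{x_3'}$. The key point is that $\eta_{x_3}$ maps $Y_{x_3}$ bijectively into $X_2$, and $\eta_{x_3'}$ maps $Y_{x_3'}$ bijectively into $X_2$; hence $\gamma_{x_3,x_3'}=\eta_{x_3'}^{-1}\circ\eta_{x_3}$ maps $\eta_{x_3}(Y_{x_3})\cap \eta_{x_3'}(Y_{x_3'})\subseteq X_2$ back into $X_1$, and in fact for $x_1\in Y_{x_3}$ with $\eta_{x_3}(x_1)\in \eta_{x_3'}(Y_{x_3'})$ we get $\gamma_{x_3,x_3'}(x_1)\in X_1$. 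So the number of $(x_1,x_1')\in X_1\times X_1$ with $\gamma_{x_3,x_3'}x_1=x_1'$ is at least $|\eta_{x_3}(Y_{x_3})\cap \eta_{x_3'}(Y_{x_3'})|\geq |Y_{x_3}|+|Y_{x_3'}|-|X_2|$. The obstacle here is that $|Y_{x_3}|+|Y_{x_3'}|-|X_2|$ need not be positive for individual pairs since the $|Y|$'s are only a $|X_1|^{-O(\delta)}$-fraction of $|X_2|$. I would fix this by a second averaging: rather than intersecting sets of size $|X_1|^{1-O(\delta)}$ inside $X_2$ of size $|X_1|$ and hoping for a large intersection, I count across all pairs. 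For a fixed $x_2\in X_2$, the number of $x_3\in X_3'$ with $x_2\in\eta_{x_3}(Y_{x_3})$ is the number of $x_3\in X_3'$ whose line meets $X_2$ at $x_2$ and $X_1$ somewhere; summing over $x_2$ this total is $\sum_{x_3\in X_3'}|Y_{x_3}|\gg |X_3'|\,|X_1|^{1-O(\delta)}\gg |X_1|^{2-O(\delta)}$. By Cauchy--Schwarz (or convexity), the number of pairs $(x_3,x_3')\in (X_3')^2$ with $\eta_{x_3}(Y_{x_3})\cap\eta_{x_3'}(Y_{x_3'})\neq\emptyset$ together with a count of the total incidences yields $\sum_{x_3,x_3'\in X_3'}|\eta_{x_3}(Y_{x_3})\cap\eta_{x_3'}(Y_{x_3'})|=\sum_{x_2\in X_2}\bigl(\#\{x_3\in X_3': x_2\in\eta_{x_3}(Y_{x_3})\}\bigr)^2\geq \frac{1}{|X_2|}\bigl(\sum_{x_3\in X_3'}|Y_{x_3}|\bigr)^2\gg |X_1|^{3-O(\delta)}$.

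Finally I would extract $S$. Set $S_0=\{\gamma_{x_3,x_3'}: x_3,x_3'\in X_3'\}$, a subset of $\{\gamma_{x,y}:x,y\in X_3\}$. From the previous display, $\sum_{x_3,x_3'\in X_3'} M(x_3,x_3')\gg |X_1|^{3-O(\delta)}$ where $M(x_3,x_3')$ is the number of $(x_1,x_1')\in X_1^2$ with $\gamma_{x_3,x_3'}x_1=x_1'$. Passing to the map values: $\sum_{s\in S_0} \bigl(\#\{(x_3,x_3')\in (X_3')^2:\gamma_{x_3,x_3'}=s\}\cdot\text{(same count)}\bigr)$; more simply, since each $\gamma_{x_3,x_3'}$ contributes the same solution count $M(x_3,x_3')$ only via its group element, we have $\sum_{s\in S_0}|\{(x,y)\in X_1^2: sx=y\}|\cdot w(s)\gg |X_1|^{3-O(\delta)}$ with weights $w(s)=\#\{(x_3,x_3'):\gamma_{x_3,x_3'}=s\}$, and $\sum_s w(s)=|X_3'|^2\leq |X_1|^2$. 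A final Markov/dyadic pigeonhole over $S_0$ --- discarding the $s$ with small solution count and the $s$ with atypically large fiber $w(s)$, using that $\sum_s w(s)\leq|X_1|^2$ and the total weighted sum is $\gg |X_1|^{3-O(\delta)}$ --- produces $S\subseteq S_0\subseteq\{\gamma_{x,y}:x,y\in X_3\}$ with $|S|\gg |X_1|^{1-O(\delta)}$ on which $|\{(x,y)\in X_1^2: sx=y\}|\gg |X_1|^{1-O(\delta)}$ for every $s\in S$, giving $|\{sx=y:s\in S, x,y\in X_1\}|\gg |S|\,|X_1|^{1-O(\delta)}$. Tracking the constants through the two or three pigeonholing steps, each losing a bounded multiple of $\delta$ in the exponent, gives the claimed $|X_1|^{1-4\delta}$ (adjusting the implied constant in $\gg$). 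The main obstacle, as noted, is the first one: the sets $Y_{x_3}$ are genuinely a small power-fraction of $X_2$, so one cannot argue pairwise and must instead do the global second-moment count over $X_2$ to locate enough good pairs $(x_3,x_3')$; getting the bookkeeping of the $\delta$-losses tight enough to land at exponent $1-4\delta$ rather than something worse is the delicate part.
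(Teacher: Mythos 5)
Your plan is sound and arrives at the heart of the lemma by a route that is cleaner than the paper's at the crucial counting step. The paper first pigeonholes to a set $X_2'\subseteq X_2$ of popular middle points, squares, and lands at $|A|\gg|X_1|^{3-(15/4)\delta}$; your second moment over $x_2\in X_2$ gives the stronger bound $\sum_{x_3,x_3'}|\eta_{x_3}(Y_{x_3})\cap\eta_{x_3'}(Y_{x_3'})|\geq |E|^2/|X_2|\gg|X_1|^{3-2\delta}$ in one step, and the preliminary passage to $X_3'$ is in fact unnecessary. You also correctly identify that this intersection count lower-bounds the number of $(x_1,x_1')\in X_1^2$ with $\gamma_{x_3,x_3'}x_1=x_1'$, which is precisely the role played by the set $A$ in the paper.

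The one place you should make explicit is the final extraction of $S$. To pass from $\sum_s w(s)m(s)\gg|X_1|^{3-O(\delta)}$ (where $m(s)=|\{(x,y)\in X_1^2:sx=y\}|$) to both $|S|\gg|X_1|^{1-O(\delta)}$ and $m(s)\gg|X_1|^{1-O(\delta)}$ on $S$, the Markov/dyadic pigeonholing you gesture at only closes once one records the pointwise bounds $m(s)\leq|X_1|$ and, critically, $w(s)\leq|X_3|=|X_1|$. The latter holds because for fixed $x_3$ the map $y\mapsto\gamma_{x_3,y}$ is injective, so each fiber of $(x_3,x_3')\mapsto\gamma_{x_3,x_3'}$ meets each row $\{x_3\}\times X_3$ at most once. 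This is exactly the injectivity the paper uses to lower-bound $|S|$; without it a dyadic class in $w$ could in principle be supported on $O(1)$ group elements with fibers of size $\Theta(|X_1|^2)$, and $|S|$ would not come out. With the fiber bound in hand, the simplest finish is a single threshold: set $\tilde B:=\{(x_3,x_3'):M(x_3,x_3')\geq c|X_1|^{1-2\delta}\}$ using your improved total; since $M\leq|X_1|$, $|\tilde B|\gg|X_1|^{2-2\delta}$, and taking $S:=\{\gamma_{x_3,x_3'}:(x_3,x_3')\in\tilde B\}$ the fiber bound gives $|S|\gg|X_1|^{1-2\delta}$ while each $s\in S$ has $m(s)\gg|X_1|^{1-2\delta}$, which is better than the stated $1-4\delta$. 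One last remark: inside its proof of this lemma the paper additionally runs a dyadic pigeonhole on the fiber sizes specifically to establish the ``drop'' estimate \eqref{equation: drop}, which is used later in the proof of Theorem~\ref{thm: three plane}. That estimate is not part of the lemma's statement, so its absence is not a gap in your plan, but if you intend to follow the paper's proof of Theorem~\ref{thm: three plane} you would need to carry the equalized-fiber step along.
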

\begin{proof}
Let $A:=\{(x_1,x_1',x_3,x_3')\in X_1\times X_1\times X_3\times X_3: x_1',x_3',\ell_{x_1,x_3}\cap P_2\text{ is collinear}\}$. We first claim that $|A|\gg |X_1|^{3-(15/4)\delta}.$ 
Indeed, let \[X_2':=\{x\in X_2:  |\{(x_1,x_3)\in X_1\times X_3, (x_1,x,x_3)\in E\}|\gg |X_1|^{1-(5/4)\delta}\}.\] We have $|X_2'|\gg |X|^{1-(5/4)\delta}$. Suppose not, then \[\max\{|E\cap (X_1\times X_2'\times X_3))|,|E\cap (X_1\times (X_2\setminus X_2')\times X_3)|\}\ll |X_1|^{1-(5/4)\delta}|X_1|,\] hence $|E|\ll |X_1|^{2-(5/4)\delta}$, a contradiction.

Given any $x\in X_2'$, we have \[|\{(x_1,x_1',x_3,x_3')\in X_1\times X_1\times X_3\times X_3: (x_1,x,x_3)\in E , (x_1',x,x_3')\in E\}|\geq |X_1|^{2(1-(5/4)\delta)}.\] Therefore, $|A|\gg |X_1|^{3-(15/4)\delta}.$ 

Let $\tilde{B}:=\{(x_3,x_3')\in X_3^2: |\{(x_1,x_1'):(x_1,x_1',x_3,x_3')\in A\}|\gg |X_1|^{1-4\delta}\}$. Then by the same argument $|\tilde{B}|\gg |X_1|^{2-4\delta}$. Consider the map $f:\tilde{B}\to G$ by sending $(x,y)$ to $\gamma_{x,y}$. Let $N:=\frac{2}{\delta}$. For $0\leq k<N$ define \[B_k:=\{(x,y)\in \tilde{B}: |X_1|^{k\delta}\leq |f^{-1}(\gamma_{x,y})|\leq|X_1|^{(k+1)\delta}\}.\] Since $N$ is fixed when $\delta$ is fixed, there is $k_0$, such that $B_{k_0}\gg |X_1|^{2-4\delta}$ (whenever $|X_1|\gg N$).

Let $B:=B_{k_0}$ and $S:=\{\gamma_{x,y}: (x,y)\in B\}$. Then by the definition of $\title{B}$, we have \[
|\{sx=y: s\in S,x,y\in X_1\}|\gg |S||X_1|^{1-4\delta}.
\] By pigeon-hole, there is $x\in X_3$ with $|\{y\in X_3,(x,y)\in B\}|\gg |X_1|^{1-4\delta}$ and $\gamma_{x,y}\neq\gamma_{x,y'}$ if $y\neq y'$. Hence, $|S|\gg |X_1|^{1-4\delta}$ as desired. 

Moreover, we claim that for all $\eta>0$, for all $B'\subseteq B$, suppose $|B'|\leq |X_1|^{2-\eta}$, then \begin{equation}\label{equation: drop}
    \frac{|f(B')|}{|S|}\leq |X_1|^{5\delta-\eta}.
\end{equation}
Note that $|S||X_1|^{k_0\delta}\leq |B|\leq |S||X_1|^{(k_0+1)\delta}$ and $|B'|\geq |f(B')||X_1|^{k_0\delta}$ by the definition of $B=B_{k_0}$. Hence, 
\begin{align*}
|f(B')|&\leq \frac{|B'|}{|X_1|^{k_0\delta}}\leq \frac{|X_1|^{2-\eta}}{|X_1|^{k_0\delta}}\leq \frac{|X_1|^{2-4\delta}|X_1|^{4\delta-\eta}}{|X_1|^{k_0\delta}} \\
&\leq \frac{|B||X_1|^{4\delta-\eta}}{|X_1|^{k_0\delta}}\leq\frac{|S||X_1|^{(k_0+1)\delta}|X_1|^{4\delta-\eta}}{|X_1|^{k_0\delta}}=|S||X_1|^{5\delta-\eta}.
\qedhere
\end{align*}
\end{proof}

The next lemma verifies the stabilizer condition. 

\begin{lemma}\label{lem: (5)}
Let $G:=G_a(\mathsf{K})^2\rtimes G_m(\mathsf{K})$ acting on $P_1:=\{[0:\xi_1:\xi_2:\xi_3]:\xi_i\in \mathsf{K}\}$ by $(a,b,c)\star[0:\xi_1:\xi_2:\xi_3]=[0:c\xi_1:\xi_2+a\xi_1:\xi_3+b\xi_1]$.
    Let $X_1\subseteq P_1(\mathsf{K})$ be a finite set. Suppose there is $\varepsilon>0$ such that $|X_1\cap\ell|\ll |X_1|^{1-\varepsilon}$ for any projective line $\ell\in P_1(\mathsf{K})$.  Then 
    \[|\{(p_1,p_2)\in X_1^2:\mathrm{Stab}_G(p_1,p_2)\neq \mathrm{id}_G\}|\leq |X_1|^{2-\varepsilon}.\]
\end{lemma}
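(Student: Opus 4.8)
The plan is to understand exactly when a nontrivial element of $G = G_a(\mathsf{K})^2 \rtimes G_m(\mathsf{K})$ fixes two points of $P_1$, and to show that each such "bad" pair forces both points onto a common line (or something of the same flavour), so that the non-concentration hypothesis $|X_1 \cap \ell| \ll |X_1|^{1-\varepsilon}$ does the rest. Concretely, write points of $P_1$ as $p = [0:\xi_1:\xi_2:\xi_3]$ and split into the affine part $\{\xi_1 \neq 0\}$, which we normalise to $[0:1:u:v]$, and the line at infinity $\ell_\infty := \{\xi_1 = 0\} = \{[0:0:\xi_2:\xi_3]\}$. The action $(a,b,c)\star[0:1:u:v] = [0:c:u+a:v+b] = [0:1:(u+a)/c:(v+b)/c]$ shows $G$ acts on the affine chart as the group of maps $(u,v) \mapsto ((u+a)/c, (v+b)/c)$, and fixes $\ell_\infty$ setwise (acting there by $[0:0:\xi_2:\xi_3] \mapsto [0:0:\xi_2:\xi_3]$, i.e. trivially on $\ell_\infty$).

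The first step is the stabilizer analysis. Suppose $g = (a,b,c) \neq \mathrm{id}$ fixes two distinct points $p_1, p_2 \in P_1$. If both lie on $\ell_\infty$, that gives no constraint — but $\ell_\infty$ is itself a single line, so such pairs number at most $|X_1 \cap \ell_\infty|^2 \ll |X_1|^{2-2\varepsilon} \le |X_1|^{2-\varepsilon}$, acceptable. So I may assume at least one of $p_1,p_2$ is affine, say $p_1 = [0:1:u_1:v_1]$. The fixed-point equation $((u+a)/c, (v+b)/c) = (u,v)$ reads $a = (c-1)u$, $b = (c-1)v$. If $c \neq 1$, this determines the unique affine fixed point $(u,v) = (a/(c-1), b/(c-1))$, so $g$ fixes at most one affine point; hence $p_2 \in \ell_\infty$, and for fixed $p_1$ the pair $(p_1,p_2)$ ranges over $\{p_1\} \times (X_1 \cap \ell_\infty)$, giving at most $|X_1| \cdot |X_1 \cap \ell_\infty| \ll |X_1|^{2-\varepsilon}$ such pairs overall. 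If $c = 1$ (so $g$ is the pure translation $(a,b,0)$ with $(a,b) \neq (0,0)$), then $g$ fixes an affine point $(u,v)$ iff $a = b = 0$, impossible; so $c=1$ forces both points onto $\ell_\infty$, already counted.

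Assembling: the only bad pairs are those with both points on $\ell_\infty$ (at most $|X_1 \cap \ell_\infty|^2$), or those of the form (one fixed affine point, one point on $\ell_\infty$) arising from $c \neq 1$ (at most $|X_1|\cdot|X_1 \cap \ell_\infty|$); in every case the count is $\ll |X_1| \cdot |X_1|^{1-\varepsilon} = |X_1|^{2-\varepsilon}$, which is the claimed bound. I do not expect a genuine obstacle here — the subtlety is purely bookkeeping: one must be careful to treat the projective line at infinity and the affine chart separately, since points at infinity are fixed "for free" by every translation, and to observe that the set of such exceptional points is itself contained in the single line $\ell_\infty$, so the hypothesis on $X_1$ still applies. (If a cleaner uniform statement is wanted, one can note that in all cases $p_1$ and $p_2$ are joined by a line through a point fixed by $g$, but the direct chart computation above is the most transparent route.)
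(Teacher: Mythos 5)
Correct, and essentially the paper's approach: both proofs observe that $G$ fixes the line $\ell_\infty=\{\xi_1=0\}$ pointwise, solve the fixed-point equations for a nonidentity $g$ on an affine point of $P_1$, and conclude that for fixed $p$ the bad partners $p'$ lie on a bounded number of lines, which the non-concentration hypothesis then controls. Your case analysis is in fact a touch tighter than the paper's --- you show that two distinct affine fixed points force $g=\mathrm{id}$, whereas the paper only extracts the weaker linear constraint $\xi_2/\xi_3=\xi_2'/\xi_3'$ --- and note the small typo: the identity of $G_m(\mathsf{K})$ is $1$, so the $c=1$ element should be written $(a,b,1)$, not $(a,b,0)$.
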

\begin{proof}
Let $p=[0:\xi_1:\xi_2:\xi_3]\in P_1$ and $g=(a,b,c)\in G$. Note that if $\xi_1=0$, then $\mathrm{Stab}_G(p)=G$. Suppose $\xi_1\neq 0$, $g\neq \mathrm{id}_G$ and $g\in\mathrm{Stab}_G(p)$, then either 
\[
\frac{a}{b}=\frac{\xi_2}{\xi_3} \text{ if } \xi_3\neq 0, \text{ or } b=0 \text{ if } \xi_3=0.
\]
Therefore, if $\mathrm{Stab}_G(p,p')\neq \mathrm{id}_G$ for $p=[0:\xi_1:\xi_2:\xi_3]$ and $p'=[0:\xi_1':\xi_2':\xi_3']$, then one of the following holds:
\begin{enumerate}
    \item 
    $x=0$ for $x\in\{\xi_1,\xi_2,\xi_3,\xi_1',\xi_2',
    \xi_3'\}$;
    \item 
    $\frac{\xi_2}{\xi_3} =\frac{\xi_2'}{\xi_3'}=t$ for some $t\in \mathsf{K}$.
\end{enumerate}
Fix any $p\in P_1$, the set of $p'\in P_1$ such that $\mathrm{Stab}_G(p,p')\neq \mathrm{id}_G$ is in a finite union of lines given by case (1) and (2). Thus, the conclusion follows by assumption. 
\end{proof}

To apply Theorem~\ref{thm: main1} or Theorem~\ref{thm: main}, a major assumption needs to be satisfied is that $S$ needs to escape subgroups that are sufficiently nilpotent. The next two lemmas characterize ``sufficiently nilpotent'' groups in $G_a^2(\mathsf{K})\rtimes G_m(\mathsf{K})$ for any field $\mathsf{K}$. 

\begin{lemma}\label{lem: nilpotent subgroup G_a general}
    Let $G\leq G_a(\mathsf{K})^2\rtimes G_m(\mathsf{K})$ for some field $\mathsf{K}$. Suppose $G$ is nilpotent, then $G$ is abelian. 
\end{lemma}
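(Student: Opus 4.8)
The plan is to analyze the group structure of $G \leq G_a(\mathsf{K})^2 \rtimes G_m(\mathsf{K})$ directly via the action of the $G_m$-part on the $G_a^2$-part. Write a general element as $g = (v, c)$ with $v \in G_a(\mathsf{K})^2 = \mathsf{K}^2$ and $c \in G_m(\mathsf{K}) = \mathsf{K}^\times$, where the multiplication is $(v, c)(w, d) = (v + c\cdot w, cd)$. The commutator of two elements $(v,c)$ and $(w,d)$ is then easily computed: it lies in the translation subgroup $N := G_a(\mathsf{K})^2 \times \{1\}$, and equals $((1-d)v - (1-c)w, 1)$ up to a fixed scalar depending on $c,d$ — the key point being that the commutator is purely translational. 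Consequently $[G,G] \subseteq G \cap N$, and $G \cap N$ is central in $G$ only if every element of $G$ acts trivially by conjugation on $G \cap N$.

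First I would dispose of the case where $G \subseteq N$: then $G$ is already abelian and we are done. Otherwise, pick some $(v, c) \in G$ with $c \neq 1$. Conjugation by $(v,c)$ acts on a translation $(w,1) \in G \cap N$ by sending it to $(cw, 1)$. If $G$ is nilpotent, then the lower central series terminates, so in particular $[G, G\cap N] \subsetneq G \cap N$ unless $G \cap N$ is trivial; iterating, $[\,\underbrace{G,[G,\dots[G}_{m}, G\cap N]\dots]]$ must eventually vanish. But $[(v,c), -]$ acts on $G \cap N$ as multiplication by $(c-1)$ (an invertible linear-ish map on the relevant $\mathsf{K}$-subspace when $c \neq 1$, since $c-1$ may be nonzero), so repeated commutators with $(v,c)$ do not shrink $G \cap N$. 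This forces $c = 1$ whenever $(v,c)$ normalizes a nontrivial part of $G \cap N$ sitting inside the lower central series — more carefully, it forces the $G_m$-component of every element of $G$ to act trivially on $[G,G]$, hence $[G,G]$ is central and, because the action of $c \neq 1$ on translations has no nonzero fixed vectors unless $c=1$ in characteristic considerations, one concludes $[G,G]$ is trivial, i.e. $G$ is abelian.

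The key steps in order: (1) set up coordinates and record the semidirect product multiplication; (2) compute $[(v,c),(w,d)]$ explicitly and observe it lies in $N$ and depends linearly on the translation parts; (3) show $[G,G] \subseteq G \cap N$ and describe how conjugation by $(v,c)$ scales translations by $c$; (4) use that nilpotency forces the lower central series to reach $\{\mathrm{id}\}$, and that the "scaling by $c$" operator is either the identity (when $c=1$) or has no nontrivial eventually-vanishing orbits, to conclude that the $G_m$-components of elements of $G$ act trivially on $[G,G]$; (5) deduce $[G,G] = \{\mathrm{id}\}$, possibly by a small separate argument splitting on whether $G$ has an element with nontrivial $G_m$-component and whether $G \cap N$ is trivial. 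I expect the main obstacle to be step (4)–(5): handling the interaction between the multiplicative action and nilpotency cleanly in \emph{all} characteristics — in particular making sure that an element $(v,c)$ with $c$ a root of unity, or with $c - 1$ a zero divisor phenomenon (which cannot happen in a field, but one must phrase the fixed-point statement correctly), does not create a spurious nilpotent-but-nonabelian example. The saving grace is that $\mathsf{K}$ is a field, so $c - 1 \neq 0$ implies multiplication by $c-1$ is injective on $\mathsf{K}^2$, which should be exactly what kills the lower central series from stabilizing at a nontrivial subgroup.
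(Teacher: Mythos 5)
Your proposal is essentially the same argument as the paper's. Both compute the commutator of a translation $(w,1)$ with an element $(v,c)$ and observe it scales the translation part by $c-1$, then use that $\mathsf{K}$ is a field so $c-1\neq 0$ when $c\neq 1$ to conclude that the iterated commutators $[(v,c),\dots,[(v,c),(w,1)]\dots]$ never vanish, contradicting nilpotency. The paper is more direct and cleaner: it takes a nontrivial $g=(a,b,1)\in[G,G]$ (which exists if $G$ is non-abelian, since $[G,G]\leq G_a^2$) and a $g'=(a',b',c)$ with $c\neq1$ (which exists since otherwise $G\leq G_a^2$ is abelian), and iterates $[g,g']$, $[[g,g'],g']$, etc.\ directly inside the lower central series. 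Your version phrases this via the normal subgroup $G\cap N$ and the general fact that $[\underbrace{G,\dots,G}_{m},G\cap N]\subseteq\gamma_{m+1}(G)$ vanishes for large $m$; this is correct but slightly roundabout, and the sentence beginning ``This forces $c=1$\dots'' is somewhat muddled and would benefit from being replaced by the paper's clean iteration. No genuine gap, just the same proof with more scaffolding.
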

\begin{proof}
    Note that for any $g:=(a,b,1)$ and $g':=(a',b',c)$, we have $[g,g']=(a(c-1),b(c-1),1)\neq (0,0,1)$ if $c\neq 1$ and $(a,b)\neq (0,0)$.

If $H$ is non-abelian, then there are $g:=(a,b,1)\in [H,H]$ (since $[H,H]\leq G_a^2$) for some $(a,b)\neq (0,0)$ and $g':=(a',b',c)\in H$ with $c\neq 1$. Hence, $[g,g']$ is non-trivial in $G_a^2$, and similarly $[[g,g'],g']$ is non-trivial and so on. We conclude that $H$ is not nilpotent, contradicting our assumption.
\end{proof}

\begin{lemma}\label{lem: nilpotent subgroup}
    Let $\mathsf{K}$ be a field of characteristic $p>0$, and $N>0$ be an integer. Let $G=G_a(\mathsf{K})^2\rtimes G_m(\mathsf{K})$, and $S\subseteq G$ with $|S|< p^{1/N}$. Suppose $D\subseteq S^N$ is a subgroup of $G$. Then $D\cap G_a^2=\mathrm{id}_G$. Furthermore, suppose $H\leq G$ such that $D\trianglelefteq H$ and $H/D$ is nilpotent, then $H$ is abelian.
\end{lemma}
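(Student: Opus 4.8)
The plan is to prove the two assertions in order. First I would show $D \cap G_a^2 = \mathrm{id}_G$. Write an element of $G_a^2$ as $(a,b,1)$; this is a unipotent element of $\mathrm{GL}_3(\mathsf K)$ (or an affine translation), so in characteristic $p$ it has order exactly $p$ unless $(a,b)=(0,0)$. On the other hand, every element of $D$ lies in $S^N$ and $D$ is a group, so the cyclic group generated by any $g \in D$ is contained in $D \subseteq S^N$ and hence has size at most $|S|^N < p$. Therefore $D$ contains no element of order $p$, so $D \cap G_a^2 = \{(0,0,1)\} = \mathrm{id}_G$.

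Next I would deduce that the multiplicative coordinate on $D$ is constrained: the projection $\pi \colon G \to G_m(\mathsf K)$, $(a,b,c)\mapsto c$, has kernel $G_a^2$, so by the previous paragraph $\pi$ restricts to an injection $D \hookrightarrow G_m(\mathsf K)$; thus $D$ is isomorphic to a finite subgroup of $\mathsf K^\times$, hence cyclic, and in particular $D$ is abelian with $D \cap G_a^2 = \mathrm{id}$.

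Now suppose $H \leq G$ with $D \trianglelefteq H$ and $H/D$ nilpotent. I want to conclude $H$ is abelian. By Lemma~\ref{lem: nilpotent subgroup G_a general} it suffices to show $H$ is nilpotent. The idea is that $D$ is central-like enough to lift nilpotence from $H/D$ to $H$. Concretely: since $D$ maps isomorphically (via $\pi$) onto a subgroup of the abelian group $G_m$, and since conjugation in $G$ acts on the $G_a^2$-part while fixing the $G_m$-part, I would argue that $D$ lies in the center of $H$. Indeed, for $h=(a',b',c')\in H$ and $d=(0_a,0_b,c)\in D$ (using $D \cap G_a^2 = \mathrm{id}$, so $D$ has trivial $G_a^2$-component), one computes $[h,d] = ((a'(c-1)\cdot(\text{stuff}),\,\ldots),1)$ lies in $G_a^2$; but $[h,d] \in D$ since $D \trianglelefteq H$, and $D \cap G_a^2 = \mathrm{id}$, forcing $[h,d]=\mathrm{id}$. (One should double-check the commutator formula for the semidirect product structure $(a,b,c)\cdot(a',b',c') = (a+ca', b+cb', cc')$, which gives $[(a',b',c'),(0,0,c)] = (a'(c-1)/\text{..}, b'(c-1)/\text{..}, 1)$ — the precise scalar is immaterial, only that it lies in $G_a^2$.) Hence $D \leq Z(H)$, and a central extension of a nilpotent group by a central subgroup is nilpotent (if $H/D$ has nilpotency class $c$, then $[\,H,\ldots,H\,]$ with $c+1$ entries lands in $D \leq Z(H)$, so the lower central series of $H$ terminates at step $c+1$). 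Therefore $H$ is nilpotent, and Lemma~\ref{lem: nilpotent subgroup G_a general} finishes the proof.

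The main obstacle I anticipate is the computation showing $D \leq Z(H)$: one must be careful that $D$ having trivial $G_a^2$-component (from the first part) is exactly what makes every commutator $[h,d]$ with $d \in D$ land in $G_a^2$, after which the normality $D \trianglelefteq H$ together with $D \cap G_a^2 = \mathrm{id}$ kills it. If instead $D$ had a nontrivial $G_a^2$-part this argument would fail, so the order-of-elements bound from the hypothesis $|S| < p^{1/N}$ is doing essential work and must be invoked before the centrality step.
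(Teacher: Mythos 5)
Your proof is essentially correct and reaches the same conclusion, but there is one false intermediate claim that deserves correction, and your route is slightly longer than the paper's.

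The false claim: you write ``using $D\cap G_a^2=\mathrm{id}$, so $D$ has trivial $G_a^2$-component'' and then take $d=(0,0,c)$. This does not follow: $D\cap G_a^2=\mathrm{id}$ only says no nontrivial element of $D$ has $G_m$-coordinate equal to $1$; an element $(a,b,c)\in D$ with $c\neq 1$ can perfectly well have $(a,b)\neq(0,0)$. Fortunately the error is harmless, because the step you actually need, $[h,d]\in G_a^2$, holds for \emph{any} $h,d\in G$: since $G_m$ is abelian, the $G_m$-coordinate of any commutator in $G$ is trivial, so $[G,G]\leq G_a^2$. With that observation in place, your argument $[h,d]\in D\cap G_a^2=\mathrm{id}$ (using normality of $D$ in $H$) correctly gives $D\leq Z(H)$, and the standard fact that a central extension of a nilpotent group is nilpotent finishes the job via Lemma~\ref{lem: nilpotent subgroup G_a general}.

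The paper avoids the centrality detour: it notes directly that for $c\geq 2$ the iterated commutator $\gamma_c(H)$ lies in $[H,H]\leq G_a^2$, and nilpotency of $H/D$ forces $\gamma_c(H)\leq D$, so $\gamma_c(H)\leq D\cap G_a^2=\mathrm{id}$, giving nilpotency of $H$ in one stroke. Your approach applies the same inclusion $D\cap G_a^2=\mathrm{id}$ once to prove $D\leq Z(H)$ and then invokes a general lemma; the paper applies it once to the $c$-th lower central term. Both are valid; the paper's is marginally more economical and does not require $D$ to be central.
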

\begin{proof}
Suppose $D\cap G_a^2\neq 1$. Then there is $g:=(a,b,1)\in D$ with $(a,b)\neq (0,0)$. Now $\langle g\rangle\leq G_a(K)^2$ and $|\langle g\rangle|= p$. However, $|D|< p$ by assumption (as $|S|< p^{1/N}$ and $D\subseteq S^N$). 

 Since $[H,H]\leq G_a^2$, if $H/D$ is nilpotent, then $[\cdots[[H/D,H/D],H/D],\cdots,H/D]=\mathrm{id}_G$. Thus, $[\cdots[[H,H],H],\cdots,H]D/D=\mathrm{id}_G$. Hence, $[\cdots[[H,H],H],\cdots,H]\leq D\cap G_a^2=\mathrm{id}_G$. Namely, $H$ is nilpotent, and we conclude by the previous lemma.
\end{proof}

\begin{lemma}\label{lem: abelian group}
Let $g:=(a,b,m)\in G$ with $m\neq 1$. Then $C_G(g)\leq (G_m)^{g'}$, where $g':=(\frac{a}{1-m},\frac{b}{1-m},1)$.
\end{lemma}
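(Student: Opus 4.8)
The plan is to exhibit an explicit conjugating element and reduce the centralizer computation to a direct calculation inside the semidirect product. First I would write $g = (a,b,m)$ with $m \neq 1$ and set $g' := \left(\frac{a}{1-m}, \frac{b}{1-m}, 1\right) \in G_a(\mathsf{K})^2$, which makes sense precisely because $m \neq 1$ so that $1 - m \in \mathsf{K}^\times$. The key observation is that conjugating $g$ by $(g')^{-1}$ should kill the $G_a^2$-part: a short computation in $G_a(\mathsf{K})^2 \rtimes G_m(\mathsf{K})$ shows that $(g')^{-1} g\, g' = (0,0,m)$, i.e. $g$ is conjugate to the pure torus element $(0,0,m)$ via $g'$. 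Equivalently $g = g' \cdot (0,0,m) \cdot (g')^{-1}$, so $g$ lies in the conjugate torus $(G_m)^{g'} := g' G_m (g')^{-1}$.

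\textbf{Key steps.} The second step is to compute $C_G\bigl((0,0,m)\bigr)$ for the diagonal torus element. Writing a general element as $h = (u,v,c)$, the commutator $[h, (0,0,m)]$ lies in $G_a^2$ and equals (up to the precise bookkeeping of the action) $\bigl(u(m-1), v(m-1), 1\bigr)$; since $m \neq 1$ this vanishes if and only if $u = v = 0$. Hence $C_G\bigl((0,0,m)\bigr) = G_m$. The third step is conjugation-invariance of centralizers: for any $x \in G$, $C_G(x^{-1} g x) = x^{-1} C_G(g)\, x$. Applying this with $x = (g')^{-1}$ and using $g' (0,0,m) (g')^{-1} = g$ gives
\[
C_G(g) = g'\, C_G\bigl((0,0,m)\bigr)\, (g')^{-1} = g'\, G_m\, (g')^{-1} = (G_m)^{g'},
\]
which is the claimed equality (in particular $C_G(g) \leq (G_m)^{g'}$, with equality). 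Note that Lemma~\ref{lem: abelian group} as stated only asserts the inclusion $C_G(g) \leq (G_m)^{g'}$, so it suffices to verify that $(g')^{-1} g\, g' \in G_m$ together with the inclusion $C_G\bigl((0,0,m)\bigr) \leq G_m$; the reverse inclusion is a bonus.

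\textbf{Main obstacle.} There is no real conceptual obstacle here; the statement is essentially the observation that a non-central element of $G$ with nontrivial torus component is, after conjugation, diagonal, and that the centralizer of a nontrivial diagonal element in this $2$-dimensional-unipotent-by-torus group is exactly the torus. The only place to be careful is the arithmetic of the semidirect-product multiplication and the action $(a,b,c) \star [0:\xi_1:\xi_2:\xi_3] = [0:c\xi_1 : \xi_2 + a\xi_1 : \xi_3 + b\xi_1]$ recorded in the earlier lemmas: one must get the twist by $c$ right when multiplying $(u,v,c)\cdot(a,b,m)$ versus $(a,b,m)\cdot(u,v,c)$, so that the difference of the two $G_a^2$-components comes out proportional to $(m-1)$ rather than to some other expression. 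Once the multiplication rule is fixed, everything is a one-line verification, so I expect the proof to be short and purely computational.
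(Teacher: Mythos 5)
Your reorganization of the argument—conjugate $g$ to a pure torus element, compute the centralizer there, and transport back via $C_G(x^{-1}gx) = x^{-1}C_G(g)x$—is a valid and genuinely different route from the paper's proof, which instead just writes out the commutation condition for an arbitrary $(x,y,z)$ and reads off $x=\frac{a(z-1)}{m-1}$, $y=\frac{b(z-1)}{m-1}$. Both are one-line computations; yours makes the structural reason transparent, the paper's is more self-contained.

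However, you got the conjugation direction backwards. The action $(a,b,c)\star[0:\xi_1:\xi_2:\xi_3]=[0:c\xi_1:\xi_2+a\xi_1:\xi_3+b\xi_1]$ is a left action, which forces the group law $(a_1,b_1,c_1)(a_2,b_2,c_2)=(a_2+a_1c_2,\,b_2+b_1c_2,\,c_1c_2)$. With this law and $g'=(\alpha,\beta,1)$, $\alpha=\frac{a}{1-m}$, one finds
\[
(g')^{-1}g\,g'=(2a,2b,m)\neq(0,0,m), \qquad g'\,g\,(g')^{-1}=(a-\alpha(1-m),\,b-\beta(1-m),\,m)=(0,0,m),
\]
so the correct identity is $g'g(g')^{-1}=(0,0,m)$, equivalently $g=(g')^{-1}(0,0,m)g'$, hence $C_G(g)=(g')^{-1}G_m\,g'$. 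You simultaneously adopted the non-standard convention $(G_m)^{g'}:=g'G_m(g')^{-1}$, so your two slips cancel and the final symbolic assertion $C_G(g)=(G_m)^{g'}$ comes out matching the lemma. But as a set your claimed centralizer $g'G_m(g')^{-1}=\{(\alpha(z-1),\beta(z-1),z)\}$ does not even contain $g$ (taking $z=m$ gives $(-a,-b,m)$), whereas $(g')^{-1}G_mg'=\{(\alpha(1-z),\beta(1-z),z)\}=\{(\tfrac{a(z-1)}{m-1},\tfrac{b(z-1)}{m-1},z)\}$ does, and agrees with the paper's explicit formula. So the paper's notation $(G_m)^{g'}$ is the standard $g'^{-1}G_mg'$; fix the conjugation to $g'g(g')^{-1}$ and the convention accordingly, and the rest of your argument is fine.
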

\begin{proof}
    If $(x,y,z)\in C_G((a,b,m))$, then $x=\frac{a(z-1)}{m-1}$ and $y=\frac{b(z-1)}{m-1}$. Namely, $(x,y,z)=(0,0,z)^{g'}$.
\end{proof}

We now begin to prove Theorem~\ref{thm: three planeIntro}. In fact, we will prove a slightly stronger version.

\begin{theorem}\label{thm: three plane}
    For any $\varepsilon>0$, there is $N\in\mathbb{N}$ and $\delta>0$, such that for all field $\mathsf{K}$ (uniform in all characteristics), the following holds for any projective planes $P_1,P_2\subseteq \mathbb{P}^3(\mathsf{K})$: Suppose there are finite sets $X_i\subseteq P_i$ for $i=1,2$ and $X_3\subseteq \mathbb{P}^3(\mathsf{K})\setminus (P_1\cup P_2)$ with 
    \begin{enumerate}
        \item 
        $1\ll |X_1|=|X_2|=|X_3|$;
        \item 
        $|X_i|< p^{1/N}$ if $\operatorname{char}(\mathsf{K})=p>0$;
        \item 
        $|X_3\cap P|\leq |X_3|^{1-\varepsilon}$ for any projective plane $P$ which intersects with $P_1$ and $P_2$ in a common line;
        \item 
        $|X_i\cap \ell|\leq |X_i|^{1-\varepsilon}$ for any projective line $\ell\subseteq P_i$ for $i=1,2,3$.
    \end{enumerate}
    Then, \[|\{(x_1,x_2,x_3)\in X_1\times X_2\times X_3: x_1,x_2,x_3 \text{ distinct and collinear}\}|\leq |X_1|^{2-\delta}.\]
\end{theorem}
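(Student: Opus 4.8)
The plan is to reduce the collinear-triple-counting problem to the group-action Szemer\'edi--Trotter theorem (Theorem~\ref{thm: main1}), applied to the solvable group $G = G_a(\mathsf{K})^2 \rtimes G_m(\mathsf{K})$ acting on $P_1$ as described in the lemmas above. We argue by contradiction: suppose $|E| \gg |X_1|^{2-\delta}$, where $E$ is the set of distinct collinear triples. By Lemma~\ref{lem: S}, we extract a subset $S \subseteq \{\gamma_{x,y} : x,y \in X_3\}$ with $|S| \gg |X_1|^{1-4\delta}$ such that $|\{sx = y : s \in S, x, y \in X_1\}| \gg |S| |X_1|^{1-4\delta}$. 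Setting $G_0 := \langle S \rangle \leq G$, $X := X_1$, and $\varepsilon_3$ coming from Lemma~\ref{lem: (5)} (so that condition (5) holds with $k = 2$), we will verify the remaining hypotheses of Theorem~\ref{thm: main1} for $G_0$, $S$, $X$, with $\delta'$ chosen small enough relative to $\delta$ that the conclusion $|\{sx=y\}| < |X|^{1-\delta'}|S|$ is contradicted.

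The heart of the matter is verifying the non-concentration conditions (3) and (4) of Theorem~\ref{thm: main1}. Since $G \leq G_a^2 \rtimes G_m$ is solvable, $n = 3$ suffices (indeed $G$ embeds in upper-triangular matrices); the relevant ``nilpotent of step at most $n-1$'' subgroups $H$ are, by Lemmas~\ref{lem: nilpotent subgroup G_a general} and~\ref{lem: nilpotent subgroup}, abelian — here the hypothesis $|X_i| < p^{1/N}$ is exactly what forces any $D \subseteq (S^{-1}S)^N$ with $D \leq G$ to satisfy $D \cap G_a^2 = \mathrm{id}$, so that $H/D$ nilpotent already implies $H$ abelian. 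Thus condition (3) reduces to: $S$ does not concentrate on any abelian subgroup $H \leq G_0$. By Lemma~\ref{lem: abelian group}, a maximal abelian subgroup containing a non-translation element $(a,b,m)$ with $m \neq 1$ is a conjugate of $G_m$; there is also the translation subgroup $G_a^2$. The plan is to translate ``$S$ concentrates on $gH$'' back, via the correspondence $\gamma_{x,y} \leftrightarrow (c'a - c, d'a - d, b'a)$, into a statement that forces many points of $X_3$ to lie on a single plane through $P_1 \cap P_2$ — contradicting hypothesis (3) of Theorem~\ref{thm: three plane} — or many points of $X_1$ onto a line, contradicting hypothesis (4); the escape estimate~\eqref{equation: drop} from Lemma~\ref{lem: S} (the ``dimension does not drop'' inequality $|f(B')|/|S| \leq |X_1|^{5\delta - \eta}$) is what converts a concentration of $S$ into a concentration of $X_3$ at the same combinatorial scale. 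For condition (4), $G_0 \lhd G_0$ itself: if $G_0/D$ were abelian for some small normal $D \in \mathcal{D}$, then (again since $D \cap G_a^2 = \mathrm{id}$) $G_0$ would be abelian, forcing $S$ into a single abelian subgroup, which the previous analysis rules out. Condition (1), $|X| < |G_0|^{1/k - \varepsilon_1}$ when $G_0$ is finite, is handled by choosing $N$ large: when $G_0$ is a finite subgroup of $G_a(\mathsf{K})^2 \rtimes G_m(\mathsf{K})$ it has bounded-index cyclic-by-elementary-abelian structure, and $S$ escaping abelian subgroups forces $|G_0|$ to be large relative to $|S| \gg |X|^{1-4\delta}$, hence relative to $|X|$; here again $|X| < p^{1/N}$ is used to control the elementary-abelian part.

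The main obstacle I anticipate is the bookkeeping in the non-concentration step: one must carefully track how the parameter $\delta$ (and the scale-$k_0$ choice inside Lemma~\ref{lem: S}) interacts with the escape inequality~\eqref{equation: drop} and the line/plane non-concentration hypotheses, to ensure that a concentration of $S$ on a coset $gH$ with $|S \cap gH| \geq |S|^{1 - \varepsilon_2}$ genuinely produces a violation of hypothesis (3) or (4) of Theorem~\ref{thm: three plane} with the correct exponent $1 - \varepsilon$ rather than a weaker one. A secondary subtlety is the case analysis on which maximal abelian subgroup $H$ is involved: the translation case $H = G_a^2$ corresponds geometrically to $\gamma_{x,y}$ with trivial $G_m$-component, i.e. to $x, y$ lying so that $\ell_{x,y}$ meets $P_1 \cap P_2$, and needs a slightly different geometric interpretation than the torus case $H = (G_m)^{g'}$. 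Once these are in place, the choice of $\delta$ and $N$ in terms of $\varepsilon$ (with the single-exponential dependence inherited from Theorem~\ref{thm: main1}) is routine.
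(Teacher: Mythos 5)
Your proposal follows the paper's strategy closely: extract $S\subseteq\{\gamma_{x,y}\}$ from Lemma~\ref{lem: S}, apply a group-action Szemer\'edi--Trotter theorem inside $G_a(\mathsf{K})^2\rtimes G_m(\mathsf{K})$, use Lemma~\ref{lem: (5)} for the stabilizer condition, reduce the ``$H/D$ nilpotent'' hypothesis to ``$H$ abelian'' via Lemmas~\ref{lem: nilpotent subgroup G_a general} and \ref{lem: nilpotent subgroup}, classify abelian subgroups via Lemma~\ref{lem: abelian group} into the translation case $H\leq G_a^2$ and the torus case $H\leq (G_m)^{g'}$, and convert coset-concentration of $S$ back to line/plane-concentration of $X_3$ using the escape estimate~\eqref{equation: drop}. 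This is all exactly what the paper does.

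The one substantive difference: you invoke Theorem~\ref{thm: main1}, whereas the paper applies Theorem~\ref{thm: main}. This matters. Theorem~\ref{thm: main} only asks you to verify the single non-concentration condition (2) for \emph{all} relevant subgroups (including $G_0=\langle S\rangle$ itself), which the paper's Claim handles uniformly: the estimate $|B'|\leq|X_1|^{2-\varepsilon}$ coming from hypotheses (3) and (4) of the theorem applies to any abelian $H\leq G$, proper or not, and when fed into~\eqref{equation: drop} it automatically rules out $G_0$ being abelian. By contrast, Theorem~\ref{thm: main1} splits this into non-concentration for \emph{proper} subgroups (your condition (3)) plus the extra requirements that $G_0/D$ not be nilpotent (condition (4)) \emph{and} that $|X|<|G_0|^{1/k-\varepsilon_1}$ when $G_0$ is finite (condition (1)). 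Condition (1) is the delicate one: you gesture at ``$S$ escaping abelian subgroups forces $|G_0|$ to be large'', but that is not the direct mechanism. What actually saves you is that a finite non-abelian $G_0\leq G_a(\mathsf{K})^2\rtimes G_m(\mathsf{K})$ has $[G_0,G_0]\leq G_a^2$ nontrivial, hence contains an element of order $p$, so $|G_0|\geq p>|X|^N$; with $N$ large enough this dominates $|X|^{k/(1-k\varepsilon_1)}$. This is correct but should be spelled out --- and you also need to note that $G_0$ non-abelian is itself a consequence of the concentration analysis applied to $H=G_0$ (not only to proper $H$), which your phrasing ``forcing $S$ into a single abelian subgroup, which the previous analysis rules out'' leaves implicit. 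Switching to Theorem~\ref{thm: main} as the paper does removes both of these bookkeeping burdens. Finally, a small slip: in the torus case $H\leq(G_m)^{g'}$ the concentration is of $X_3$ on a line, not $X_1$ --- both cases of the Claim control $X_3$, via hypothesis (3) in the translation case and hypothesis (4) (for $i=3$) in the torus case.
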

\begin{proof}
   Given $\varepsilon>0$ and $X_i\subseteq P_i$ for $i=1,2,3$ satisfying conditions (1)-(3) in Theorem~\ref{thm: three plane}. Suppose towards contradiction \[|\{(x_1,x_2,x_3)\in X_1\times X_2\times X_3: x_1,x_2,x_3 \text{ distinct and collinear}\}|\geq |X_1|^{2-\delta}\] for some $\delta>0$ we will choose later.
   
   Let $S\subseteq G_a(\mathsf{K})^2\rtimes G_m(\mathsf{K})=:G$ and $B\subseteq X_3$ with $S=\{\gamma_{x,y}:x,y\in B\}$ be defined as in Lemma~\ref{lem: S}, then $|X_1|^2\geq |S|\gg |X_1|^{1-4\delta}$ and \begin{equation}\label{eq: SX}
       |\{sx=y\mid s\in S,x,y\in X_1\}|\gg |S||X_1|^{1-4\delta}.
   \end{equation}

Let $\widetilde{G}:=\langle S\rangle$, we want to apply Theorem~\ref{thm: main} for $\widetilde{G}\leq G\leq \mathrm{GL}_3(\mathsf{K})$. Note that by Lemma~\ref{lem: (5)}, condition (3) of Theorem~\ref{thm: main} is satisfied with $k=2$ and $\varepsilon_3=\varepsilon$. Condition (2) is also satisfied with $r=2$ as $|S|\gg |X_1|^{1-4\delta}$ whenever $\delta<\frac{1}{8}$.

We now verify condition (2). Suppose $\operatorname{char}(\mathsf{K})=p$. Let $N_0$ be given by Theorem~\ref{thm: main} for $n=3$ ($G\leq \mathrm{GL}_3(\mathsf{K})$), $r=\frac{1}{2}$, $k=2$, $\varepsilon_1=\frac{1}{6}$, $\varepsilon_2=\frac{\varepsilon}{2}$ and $\varepsilon_3=\varepsilon$. Let $N:=2N_0$. We may assume $N>2$. Now $|S|\leq |X_1|^2<p^{2/N}=p^{1/N_0}$. Suppose there are $H\leq \tilde{G}$ and $D\subseteq S^{N_0}$ such that $H/D$ is nilpotent. Then by Lemma~\ref{lem: nilpotent subgroup}, $H$ is abelian. Suppose $\operatorname{char}(K)=0$ and $H\leq \tilde{G}$ a nilpotent subgroup. Then $H$ is also abelian by Lemma~\ref{lem: nilpotent subgroup G_a general}. Condition (3) follows from the claim below. \medskip

\noindent{\bf Claim.}
%\begin{claim}\label{lem: escape subgroup ?}
    Suppose $H\leq G$ is an abelian subgroup. Then $|S\cap gH|<|S|^{1-\frac{\varepsilon}{2}}$ for all $g\in G$.\medskip

\noindent{\emph{Proof of the Claim}.}
    Note that if $H\leq G$ is an abelian subgroup, then either $H\leq G_a^2$, or there is $(x,y,m)\in H$ with $m\neq 1$. Hence, $H\leq C_G((x,y,m))\leq (G_m)^{g'}$ where $g'=(\frac{x}{1-m},\frac{y}{1-m},1)$ by Lemma~\ref{lem: abelian group}. 

Fix $g=(a_0,b_0,m_0)\in G$. Let $B':=\{(x,y)\in B: \gamma_{x,y}\in gH\}$. We claim that $|B'|\leq |X_1|^{2-\varepsilon}$. Then $|S\cap gH|\leq |S||X_1|^{5\delta-\varepsilon}$ by (\ref{equation: drop}). Since $|S|\geq |X_1|^{1-4\delta}$, we have \[|S\cap gH|\leq |S||X_1|^{5\delta-\varepsilon}\leq |S||S|^{\frac{5\delta-\varepsilon}{1-4\delta}}=|S|^{1-\frac{\varepsilon-5\delta}{1-4\delta}}\leq |S|^{1-\frac{\varepsilon}{2}},\] where the last inequality holds when $\delta\leq\frac{\varepsilon}{10}$.

Now we prove the claim.
     Let $\gamma_{x,y}=(c'a-c,d'a-d,b'a)\in gH$ with $x=[a:1:c:d]$ and $y=[1:b':c':d']$. Suppose $H\leq G_a^2$. Then $gH\subseteq\{(t,t',m_0):t,t'\in \mathsf{K}\}$. Hence, $b'a=m_0$ and $a, b'\neq 0$ (since $x,y\not\in P_1\cup P_2$). Define $P_4:=\{[\xi_0:\frac{m_0}{a}\xi_0:\xi_2:\xi_3]:\xi_i\in \mathsf{K}\}$ then $y\in P_4$. Note that 
     $P_4$ intersects with $P_1$ and $P_2$ in the common line $\{[0:0:\xi_3:\xi_4]:\xi_i\in \mathsf{K}\}$. Hence, fix $x\in X_3$, we have $|\{y\in X_3:\gamma_{x,y}\in gH\}|\leq |X_1|^{1-\varepsilon}$ by assumption, and the claim follows in this case.
     
     For the other case, $H\leq (G_m)^{g'}$. We can write $(G_m)^{g'}$ as $\{(\lambda_1(z-1),\lambda_2(z-1),z):z\in \mathsf{K}^*\}$ for some fixed $\lambda_1,\lambda_2$ (see the proof of Lemma~\ref{lem: abelian group}). If $\gamma_{x,y}\in gH$, then \begin{equation*}
         c'a-c=(a_0+\lambda_1)z-\lambda_1;\quad
         d'a-d=(b_0+\lambda_2)z-\lambda_2;\quad
         b'a=m_0z.
     \end{equation*}
     Hence, $y=z[0:\frac{m_0}{a}:\frac{a_0+\lambda_1}{a}:\frac{b_0+\lambda_2}{a}]+[1:0:\frac{c-\lambda_1}{a}:\frac{d-\lambda_2}{a}]$. Thus, $y$ is on the line spanned by $[0:\frac{m_0}{a}:\frac{a_0+\lambda_1}{a}:\frac{b_0+\lambda_2}{a}]$ and $[1:0:\frac{c-\lambda_1}{a}:\frac{d-\lambda_2}{a}]$ for given $x=[a:1:c:d]$. By assumption, $|\{y\in X_3:\gamma_{x,y}\in gH\}|\leq |X_1|^{1-\varepsilon}$ for all $x$, and the claim  follows.
\hfill$\bowtie$
\medskip

By Theorem~\ref{thm: main}, there is $\delta'$ (only depends on $\varepsilon$, as all the constants only depend on $\varepsilon$ in our case), such that \[|\{(x_1,x_2,x_3)\in X_1\times X_2\times X_3: x_1,x_2,x_3 \text{ distinct and collinear}\}|<|X_1|^{1-\delta'}|S|.\] Choose $\delta< \frac{\delta'}{4}$, then we get a contradiction with inequality (\ref{eq: SX}). 
\end{proof}

\section{Collinear points in quadric surfaces}

Let $Q\subseteq\mathbb{P}^3(\mathsf{K})\setminus P$ be a quadric surface defined by a homogeneous equation \[\sum_{1\leq i,j\leq 4}a_{ij}x_ix_j=0\] We say $Q$ is non-degenerate or smooth if the associated symmetric matrix $B:=(a_{ij})_{i,j}$ has full rank.

Recall that the dot product $\cdot: \mathsf{K}^n\to \mathsf{K}^n$ is a bilinear form on the vector space $\mathsf{K}^n$ defined as $v\cdot w=\sum_{1\leq i\leq n} v_iw_i$ where $v=(v_1,\ldots,v_n)$ and $w=(w_1,\ldots,w_n)$.

For any point $x\in \mathbb{P}^3(\mathsf{K})$, 
%let $\tilde x$ be the line $\{kv_x:k\in K\}\subseteq K^4$ where $x=[x_1:x_2:x_3:x_4]$ and 
let $v_x$ be the vector $(x_1,\ldots,x_4)\in \mathsf{K}^4$ for some fixed choice of representation $x=[x_1:x_2:x_3:x_4]$. Three points $x,y,z\in \mathbb{P}^3(\mathsf{K})$ are collinear if and only if the linear subspace $\langle v_x,v_y,v_z\rangle$ has dimension 2. Let $W\subseteq \mathsf{K}^4$ be a linear subspace, we denote the orthogonal complement of $W$ as $W^{\bot}:=\{v\in K:v\cdot w=0, \text{ for all }w\in W\}$. 

Fix $x\in\mathbb{P}^3(\mathsf{K})\setminus Q$, let $\gamma_x:Q\to Q$ denote the bijection given by $\gamma_x(y)=z$ if and only if $x,y,z$ are collinear. Clearly $\gamma_x$ is self-inverse.  
\begin{figure}[h]
    \centering
    \includegraphics[width=0.42\linewidth]{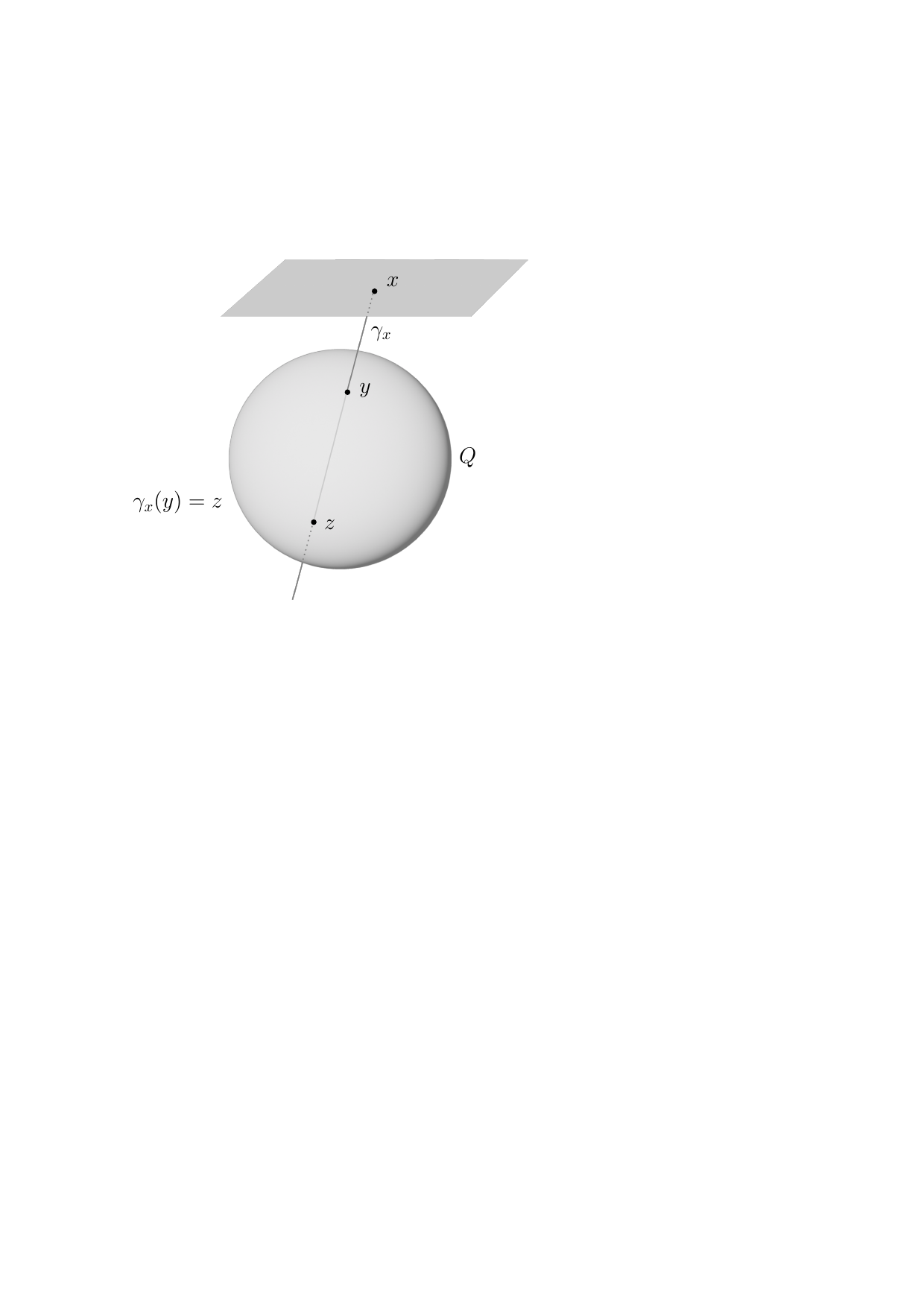}
    \caption{$\gamma_x$ maps $y$ to $z$ when $x,y,z$ are collinear.}
\end{figure}

The following lemma, originally stated in \cite[Lemma 4.7]{BDZ}, assists in determining the group structure, and we include it here for completeness.
\begin{lemma}\label{lem: PO4}
    Let $Q\subseteq \mathbb{P}^3(\mathsf{K})$ be defined by $\sum_{1\leq i\leq 4}x_i^2=0$. Let $S\subseteq \mathbb{P}^3(\mathsf{K})\setminus Q$. The group $G$ generated by $\{\gamma_x:x\in S\}$ can be naturally identified with a subgroup of $\mathrm{PO}_4(\mathsf{K})\leq \mathrm{PGL}_4(\mathsf{K})$. Here $\mathrm{PO}_4(\mathsf{K})$ is the orthogonal group preserving the dot product in $\mathsf{K}^4$.
\end{lemma}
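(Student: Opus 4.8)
The plan is to realize each generator $\gamma_x$ as the restriction to $Q$ of a genuine projective linear transformation of $\mathbb{P}^3(\mathsf{K})$ that preserves the quadratic form $\sum_i x_i^2$, and then argue that this assignment is compatible with composition, so that it extends to a group homomorphism from $G$ into $\mathrm{PO}_4(\mathsf{K})$ which is injective because $Q$ spans $\mathbb{P}^3$.

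\emph{Step 1: identify $\gamma_x$ with a reflection.} Fix $x\in\mathbb{P}^3(\mathsf{K})\setminus Q$ and let $v=v_x$ be a representing vector; since $x\notin Q$ we have $v\cdot v\neq 0$. Consider the reflection $\rho_v\in\mathrm{O}_4(\mathsf{K})$ in the hyperplane $v^{\bot}$, i.e. $\rho_v(w)=w-\tfrac{2(v\cdot w)}{v\cdot v}\,v$. I would check directly that for $y\in Q$, the point $z:=[\rho_v(v_y)]$ lies on $Q$ (because $\rho_v$ preserves the form), is distinct from $y$ (generically), and that $v_x,v_y,v_z$ are linearly dependent — indeed $\rho_v(v_y)$ lies in $\langle v, v_y\rangle$ by construction — so $x,y,z$ are collinear. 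Conversely, if $x,y,z$ are collinear with $y,z\in Q$, then $v_z\in\langle v_x,v_y\rangle$, and writing $v_z=\alpha v_x+\beta v_y$ and imposing $v_z\cdot v_z=0=v_y\cdot v_y$ forces (after normalizing) $v_z$ to be a scalar multiple of $\rho_v(v_y)$; hence $\gamma_x(y)=[\rho_v(v_y)]$. This shows $\gamma_x$ is the action on $Q$ of the class $[\rho_v]\in\mathrm{PO}_4(\mathsf{K})$, and this class is independent of the chosen representative $v$ of $x$.

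\emph{Step 2: from generators to a homomorphism.} Let $\Phi$ be the map sending a word $\gamma_{x_1}\cdots\gamma_{x_m}$ in the generators to $[\rho_{v_{x_1}}\cdots\rho_{v_{x_m}}]\in\mathrm{PO}_4(\mathsf{K})$. By Step 1, $\Phi$ respects the action on $Q$: the composite $\gamma_{x_1}\circ\cdots\circ\gamma_{x_m}$ equals the restriction to $Q$ of $[\rho_{v_{x_1}}\cdots\rho_{v_{x_m}}]$. The point is that two elements of $\mathrm{PGL}_4(\mathsf{K})$ that agree on all of $Q$ must be equal: the smooth quadric $Q$ contains a projective frame of $\mathbb{P}^3(\mathsf{K})$ (five points in general position), so a projective transformation is determined by its restriction to $Q$. (For very small fields $\mathsf{K}=\mathbb{F}_2$ one should note $Q$ still spans the ambient space; in any case the lemma is applied with $\mathsf{K}=\mathbb{C}$ or a finite field of the relevant size, and this degenerate subtlety does not affect the argument.) Consequently, if a word in the $\gamma_x$ is the identity on $Q$, the corresponding product of reflections is the identity in $\mathrm{PO}_4(\mathsf{K})$, so $\Phi$ is a well-defined injective group homomorphism $G\hookrightarrow\mathrm{PO}_4(\mathsf{K})$, and its image is the subgroup generated by the reflections $[\rho_{v_x}]$, $x\in S$. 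Since $\mathrm{PO}_4(\mathsf{K})\leq\mathrm{PGL}_4(\mathsf{K})$, this gives the claimed identification.

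\emph{Main obstacle.} The one genuinely non-routine point is the converse direction in Step 1: verifying that collinearity of $x,y,z$ with $y,z\in Q$ \emph{forces} $z=\gamma_x(y)$ to be the reflection, i.e. that the line through $x$ and $y$ meets $Q$ in exactly the two points $y$ and $[\rho_{v_x}(v_y)]$ and no others. This is a short computation with the restriction of the quadratic form to the line $\langle v_x,v_y\rangle$ — it is a nonzero binary quadratic form (nonzero because $v_x\cdot v_x\neq 0$) with root $v_y$, hence has exactly one other root up to scaling — but it is where the smoothness (or at least the non-vanishing of the form at $x$) is used, and it is the step that makes $\gamma_x$ well-defined and self-inverse in the first place. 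Everything else is bookkeeping about passing from $\mathrm{O}_4$ to $\mathrm{PO}_4$ and from generators to the generated group.
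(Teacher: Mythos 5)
Your proof is correct and follows essentially the same route as the paper's: both realize $\gamma_x$ as the orthogonal reflection $\rho_{v_x}=\tilde\gamma_x$ fixing $\langle v_x\rangle^\bot$ and sending $v_x\mapsto -v_x$, observe that $\rho_{v_x}(v_y)\in\langle v_x,v_y\rangle$ so the projective image restricted to $Q$ is $\gamma_x$, and then pass to $\mathrm{PO}_4(\mathsf{K})$. The only difference is that the paper states this more tersely (it does not spell out the converse direction in your Step~1, nor the faithfulness-of-the-action argument underlying injectivity of the induced homomorphism), so your write-up is a more complete version of the same argument rather than a different one.
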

\begin{proof}
    Fix $x\in\mathbb{P}^3(\mathsf{K})\setminus Q$, let $\tilde{\gamma}_x\in \mathrm{O} _4(\mathsf{K})$ be the linear transformation which sends $v_x$ to $-v_x$ and acts as identity on $\langle v_x\rangle^{\bot}$. As $x$ is not in $Q$, we have $\langle v_x\rangle\oplus\langle v_x\rangle^{\bot}=\mathsf{K}^4$ so that $\tilde{\gamma}_x$ is a well-defined element in $\mathrm{O}_4(\mathsf{K})$. 
    
    Let $p:\mathrm{O}_4(\mathsf{K})\to \mathrm{PO}_4(\mathsf{K})$ and $\pi:\mathsf{K}^4\setminus \{\mathrm{0}\}\to\mathbb{P}^3(\mathsf{K})$ be the natural projections. Then $\tilde{\gamma}_x$ acts on $\pi^{-1}(Q)$ since $\tilde{\gamma}_x(kv_y)\cdot\tilde{\gamma}_x(kv_y)=k^2(v_y\cdot v_y)=0$ if $y\in Q$. Moreover, write $v_y=kv_x+w$ where $w\in \langle v_x\rangle^{\bot}$, then $\tilde{\gamma}_x(v_y)=-kv_x+w=-2kv_x+v_y$. Therefore, $\tilde{\gamma}_x(v_y)\in \langle v_x, v_y\rangle$, which means $\pi(\tilde{\gamma}_x(v_y)),x,y$ are collinear. We conclude that $\gamma_x$ has the same action as $p(\tilde{\gamma}_x)$ on $Q$. And the map $\gamma_x\mapsto p(\tilde{\gamma_x})$ is injective, hence we may regard $G$ as a subgroup of $\mathrm{PO}_4(\mathsf{K})$.
\end{proof}

In the remainder of this section, we will focus on verifying the conditions required to apply Theorem~\ref{thm: main}. In particular, we aim to develop a better understanding of the relevant subgroups. We begin by proving two useful lemmas.

\begin{lemma}\label{lem: tilde g}
    Let $a\neq b\in \mathbb{P}^3(\mathsf{K})\setminus Q$ and $g\in \mathrm{PO}_4(\mathsf{K})$. Suppose $v_a\cdot v_b\neq 0$ and $g\gamma_a\gamma_b=\gamma_b\gamma_a g$. Let $\tilde g$ be a lift of $g$ in $\mathrm{O}_4(\mathsf{K})$. Then $\tilde g$ preserves the vector subspace $\langle v_a,v_b\rangle$ in $\mathsf{K}^4$. 
    
    Moreover, if $\tilde g=\tilde\gamma_x$ for some $x\in \mathbb{P}^3(\mathsf{K})\setminus Q$, then $v_x\in\langle v_a,v_b\rangle$.
\end{lemma}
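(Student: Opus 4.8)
The plan is to use the commutation hypothesis $g\gamma_a\gamma_b = \gamma_b\gamma_a g$ to constrain the lift $\tilde g$, working with the three involutions $\tilde\gamma_a, \tilde\gamma_b$ and the composite $\tilde\gamma_a\tilde\gamma_b$ in $\mathrm{O}_4(\mathsf{K})$. The key geometric input is that, by the proof of Lemma~\ref{lem: PO4}, $\tilde\gamma_a$ is the reflection fixing $\langle v_a\rangle^{\bot}$ pointwise and sending $v_a\mapsto -v_a$, and similarly for $\tilde\gamma_b$. Since $v_a\cdot v_b\neq 0$, the plane $W:=\langle v_a,v_b\rangle$ is a nondegenerate $2$-dimensional subspace, so $\mathsf{K}^4 = W\oplus W^{\bot}$, and both $\tilde\gamma_a$ and $\tilde\gamma_b$ preserve this decomposition: each acts as identity on $W^{\bot}$ and as a reflection inside $W$ (note $v_a, v_b\notin W^{\bot}$ because they are not isotropic). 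Hence $\tilde\gamma_a\tilde\gamma_b$ acts as the identity on $W^{\bot}$ and as a product of two distinct reflections (a ``rotation'') on $W$.

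First I would compute the fixed-point structure of $h:=\tilde\gamma_a\tilde\gamma_b$ restricted to $W$. Because $\tilde\gamma_a$ and $\tilde\gamma_b$ are reflections in distinct lines inside the nondegenerate plane $W$ (distinct since $v_a, v_b$ are not proportional — $a\neq b$), the restriction $h|_W$ is a nontrivial element of $\mathrm{O}(W)$ with no nonzero fixed vector in $W$; equivalently, $1$ is not an eigenvalue of $h|_W$. Meanwhile $h$ fixes all of $W^{\bot}$. Therefore the fixed subspace of $h$ acting on $\mathsf{K}^4$ is exactly $W^{\bot}$, and dually the subspace $\{v : h(v) = -v\}$ together with the rest is contained in $W$; more cleanly, $\mathrm{im}(h - \mathrm{id})\subseteq W$ and $\ker(h-\mathrm{id}) = W^{\bot}$, so that $W = (W^{\bot})^{\bot}$ is intrinsically recoverable from $h$ as the orthogonal complement of $\mathrm{Fix}(h)$.

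Next I would transfer the commutation relation to the lifts. From $g\gamma_a\gamma_b = \gamma_b\gamma_a g$ in $\mathrm{PO}_4(\mathsf{K})$ we get $\tilde g\,h = \lambda\, h^{-1}\tilde g$ for some scalar $\lambda\in\mathsf{K}^{*}$ (projective ambiguity), where $h^{-1} = \tilde\gamma_b\tilde\gamma_a$. Hence $\tilde g$ conjugates $h$ to a scalar multiple of $h^{-1}$. Since $h\in\mathrm{O}_4(\mathsf{K})$ has determinant $1$ and finite-order-type spectral behaviour on the nondegenerate plane $W$ (its eigenvalues on $W$ are a pair $\mu, \mu^{-1}$ with $\mu\neq 1$, and it fixes $W^{\bot}$), comparing determinants and the eigenvalue $1$ forces $\lambda = 1$ when $\dim W^{\bot}\geq 1$: indeed $\lambda h^{-1}\tilde g = \tilde g h$ has the eigenvalue... — more directly, $\tilde g$ maps $\mathrm{Fix}(h) = W^{\bot}$ to $\mathrm{Fix}(\lambda h^{-1})$, and a nonzero fixed vector of $\lambda h^{-1}$ is an eigenvector of $h^{-1}$ with eigenvalue $\lambda$; since $\tilde g$ is invertible and $W^{\bot}\neq 0$, $\lambda$ must be an eigenvalue of $h^{-1}$ on all of $\mathsf{K}^4$; as the only eigenvalue of $h^{-1}$ with a large eigenspace matching $\dim W^{\bot}$ is $1$, we get $\lambda = 1$ (the degenerate cases where $W$ carries eigenvalue $1$ are excluded since $h|_W$ has no fixed vector). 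Thus $\tilde g h \tilde g^{-1} = h^{-1}$, so $\tilde g$ sends $\mathrm{Fix}(h)$ to $\mathrm{Fix}(h^{-1}) = \mathrm{Fix}(h) = W^{\bot}$, i.e. $\tilde g(W^{\bot}) = W^{\bot}$, and since $\tilde g\in\mathrm{O}_4(\mathsf{K})$ it preserves orthogonal complements, whence $\tilde g(W) = W$. This proves the first assertion.

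For the ``moreover'' clause, suppose $\tilde g = \tilde\gamma_x$. Then $\tilde\gamma_x$ preserves $W$, and $\tilde\gamma_x$ is the reflection with $(-1)$-eigenspace $\langle v_x\rangle$ and $(+1)$-eigenspace $\langle v_x\rangle^{\bot}$. Since $\tilde\gamma_x$ preserves the nondegenerate plane $W$, it also preserves $W^{\bot}$, and $W^{\bot}$ decomposes into the intersections with the two eigenspaces: $W^{\bot} = (W^{\bot}\cap\langle v_x\rangle)\oplus(W^{\bot}\cap\langle v_x\rangle^{\bot})$. If $v_x\notin W$, then $\langle v_x\rangle\cap W = 0$, so $\langle v_x\rangle\subseteq$ the $(-1)$-eigenspace meets $W$ trivially, which means $\tilde\gamma_x$ restricted to $W$ has $(+1)$ as its only relevant eigenvalue... — concretely, writing $v_x = u + u'$ with $u\in W$, $u'\in W^{\bot}$, the fact that $\tilde\gamma_x$ fixes $W^{\bot}$ and preserves $W$ is automatic, but demanding that $\langle v_x\rangle$ (the $-1$ eigenline) be compatible forces $v_x\in W$ or $v_x\in W^{\bot}$; the latter is impossible since $v_x$ is not isotropic while... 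I would argue rather: $\tilde\gamma_x|_W \in \mathrm{O}(W)$ is either the identity or a reflection; it is the identity iff $\langle v_x\rangle\perp W$ iff $v_x\in W^{\bot}$, and a reflection iff $v_x$ has a nonzero component $u\in W$, in which case $\langle u\rangle$ is the reflecting line and $v_x - u\in W^{\bot}\cap\langle v_x\rangle^{\bot}$ forces $v_x - u$ orthogonal to $v_x$, i.e. $(v_x - u)\cdot v_x = 0$, giving $u\cdot v_x = v_x\cdot v_x = u\cdot u + u'\cdot u'$ while also $u\cdot v_x = u\cdot u$, so $u'\cdot u' = 0$; combined with $u' = v_x - u$ and the explicit geometry one concludes $u' = 0$ unless $W^{\bot}$ has isotropic vectors — at which point I would instead invoke that $\gamma_x = g$ moves $a$ and $b$ and use the collinearity description, namely $\gamma_a\gamma_b$ has ``axis'' $Q\cap\ell_{a,b}$ where $\ell_{a,b} = \pi(W)$, and $g = \gamma_x$ commuting with it forces $x$ on the line $\ell_{a,b}$, hence $v_x\in W$. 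The main obstacle I anticipate is precisely this last step: ruling out the degenerate possibility that $v_x$ has a nonzero $W^{\bot}$-component when $W^{\bot}$ (or $\langle v_x\rangle^{\bot}$) is a degenerate subspace over a field of small characteristic; I expect this is handled by a direct linear-algebra case analysis using $v_x\cdot v_x\neq 0$ and $v_a\cdot v_b\neq 0$, or by passing to the action on the line $\ell_{a,b}$ and observing that $\gamma_x$ must fix that line setwise.
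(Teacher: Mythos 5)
Your overall strategy mirrors the paper's: lift everything to $\mathrm{O}_4(\mathsf{K})$, study the eigenspace structure of $h = \tilde\gamma_a\tilde\gamma_b$, and use the relation $\tilde g h = \lambda h^{-1}\tilde g$ to constrain $\tilde g$. Choosing $W = \langle v_a, v_b\rangle$ rather than its orthogonal complement is cosmetic. However, three steps do not hold as written.

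\emph{First}, $v_a\cdot v_b\neq 0$ does not make $W = \langle v_a, v_b\rangle$ nondegenerate, so the decomposition $\mathsf{K}^4 = W\oplus W^\bot$ that you lean on repeatedly can fail. For example over $\CC$ take $v_a = (1,0,0,0)$ and $v_b = (1,1,i,0)$: then $v_a\cdot v_a = v_b\cdot v_b = v_a\cdot v_b = 1$ but the Gram determinant is $0$, so $W\cap W^\bot\neq 0$. Consequently the claim ``$h|_W$ has no nonzero fixed vector in $W$'' is simply false in the degenerate case (any $v\in W\cap W^\bot$ is fixed by both $\tilde\gamma_a$ and $\tilde\gamma_b$, hence by $h$). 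The correct and nondegeneracy-free route to $\mathrm{Fix}(h) = W^\bot$ is the one in the paper: if $h(v) = v$ then $\tilde\gamma_a(v) = \tilde\gamma_b(v)$, and $v - \tilde\gamma_a(v) = v - \tilde\gamma_b(v)$ lies in $\langle v_a\rangle\cap\langle v_b\rangle = 0$, so $v$ is fixed by both reflections, i.e.\ $v\in\langle v_a\rangle^\bot\cap\langle v_b\rangle^\bot = W^\bot$.

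\emph{Second}, you do not actually rule out $\lambda = -1$. After observing that $\tilde g(\mathrm{Fix}(h))$ lands in a $2$-dimensional $\lambda$-eigenspace of $h$, you assert ``the only eigenvalue of $h^{-1}$ with a large eigenspace matching $\dim W^\bot$ is $1$,'' and your parenthetical justification only excludes the eigenvalue $1$ appearing inside $W$, not the eigenvalue $-1$ doing so. But $h\in\mathrm{SO}_4$ has $\det h = 1$, so a priori the spectrum $(1,1,-1,-1)$ with two $2$-dimensional eigenspaces is entirely consistent with everything you have established so far. This is precisely the alternative the paper dispatches explicitly: if $\lambda = -1$, then $h(v_b) = -v_b$ forces $\tilde\gamma_a(v_b) = v_b$ and hence $v_a\cdot v_b = 0$, contradicting the hypothesis. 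That use of $v_a\cdot v_b\neq 0$ is essential and is missing from your argument.

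\emph{Third}, as you acknowledge yourself, the ``moreover'' clause is not closed. Your attempted case analysis runs into exactly the isotropy issue you flag ($u'\cdot u' = 0$ need not force $u' = 0$), and the alternative you sketch (``$\gamma_x$ commuting with $\gamma_a\gamma_b$ forces $x$ on $\ell_{a,b}$'') is not substantiated and in any case the actual relation is $g\gamma_a\gamma_b = \gamma_b\gamma_a g$, not a genuine commutation with $\gamma_a\gamma_b$. The paper's route here is different and cleaner: assuming $v_x\notin\langle v_a,v_b\rangle$, one shows $\tilde\gamma_x(v_a)\in\langle v_a,v_b\rangle\cap\langle v_a,v_x\rangle = \langle v_a\rangle$, which forces $v_a\in\langle v_x\rangle^\bot$ and likewise $v_b\in\langle v_x\rangle^\bot$; feeding this back into $\tilde\gamma_x\tilde\gamma_a\tilde\gamma_b = \tilde\gamma_b\tilde\gamma_a\tilde\gamma_x$ applied to $v_b$ then yields $\tilde\gamma_a(v_b)\in\langle v_b\rangle$ and hence $v_a\cdot v_b = 0$, a contradiction. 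You would need to carry out something of this form to finish the proof.
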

\begin{proof}
    Let $\tilde{\gamma}_a$ and $\tilde{\gamma}_b$ be the lifts of $\gamma_a$ and $\gamma_b$ in $\mathrm{O}_4(\mathsf{K})$ as given in the previous lemma. Then we have that $g_{ab}:=\tilde{\gamma}_a\tilde{\gamma}_b\in \mathrm{SO}_4(\mathsf{K})$. We similarly define $g_{ba}:=\tilde{\gamma}_b\tilde{\gamma}_a$. By the assumption that $g\gamma_a\gamma_b=\gamma_b\gamma_a g$, we have $\lambda\tilde{g}g_{ab}=g_{ba} \tilde{g}$ for some $\lambda\in \mathsf{K}\setminus\{0\}$.
    
    Note that the two dimensional vector subspace $W:=\langle v_a,v_b\rangle^\bot$ is fixed by $g_{ab}$ and $g_{ba}$ from the description of $\tilde{\gamma}_a$ and $\tilde{\gamma}_b$. This implies that 
    \[
    g_{ba} \tilde{g}(v)=\lambda\tilde{g}g_{ab}(v)=\lambda\tilde g(v)\]
    for all $v\in W$. Therefore, $\tilde g(W)$ is contained in the $\lambda$-eigenspace of $g_{ba}$. 
    
    The proof now is split into two cases. Let us first consider that $\lambda\neq 1$. Then for any $v\in \tilde{g}(W)$ and $w\in W$, 
    \[
    v\cdot w=g_{ba}(v)\cdot g_{ba}(w)=\lambda v\cdot w,
    \]
    as $g_{ba}$ preserves the dot product. Hence $v\cdot w=0$ since $\lambda\neq 1$. This implies that $W^\bot=\tilde g(W)$ and $g_{ba}$ can be diagonalised as $(\lambda,\lambda,1,1)$. Note also that $\det(g_{ba})=1$ and $\lambda\neq 1$, we get $\lambda=-1$. Therefore, $\tilde{\gamma}_b\tilde{\gamma}_a(v_b)=g_{ba}(v_b)=-v_b$. Hence, $\tilde{\gamma}_a(v_b)=v_b$ and $v_b\in \langle v_a\rangle^\bot$, which contradicts our assumption. 
    
    We may now assume that $\lambda=1$ and $\tilde g(W)$ is contained in the 1-eigenspace of $g_{ba}$. We claim that $W$ is the 1-eigenspace of $g_{ba}$. Suppose $v=g_{ba}(v)=\tilde{\gamma}_b\tilde{\gamma}_a(v)$, then $\tilde{\gamma}_a(v)=\tilde{\gamma}_b(v)$ and $v-\tilde{\gamma}_a(v)=v-\tilde{\gamma}_b(v)$ is in the $(-1)$-eigenspace of both $\tilde{\gamma}_a$ and $\tilde{\gamma}_b$. Therefore, $v-\tilde{\gamma}_a(v)\in \langle v_a\rangle\cap \langle v_b\rangle=0$ and $\tilde{\gamma}_a(v)=v=\tilde{\gamma}_b(v)$ and $v\in W$. In conclusion, $\tilde g(W)=W$, and thus $\tilde g(W^\bot)=W^\bot$. Hence, $\tilde g$ preserves $\langle v_a,v_b\rangle$ as desired.

    Finally we prove the moreover part of the lemma. Suppose $\tilde g=\tilde \gamma_x$, and $v_x\not\in \langle v_a,v_b\rangle$. By the first part of this lemma, $\tilde\gamma_x$ preserves $\langle v_a,v_b\rangle$ and $\tilde\gamma_xg_{ab}=g_{ba}\tilde\gamma_x$.  Observe that $\tilde\gamma_x(v_a)\in \langle v_x,v_a\rangle$. Indeed, as in Lemma \ref{lem: PO4}, \[
    \tilde\gamma_x(v_a)=\tilde\gamma_x(kv_x+w)=-kv_x+w=v_a-2kv_x
    \]
    where $v_a=kv_x+w$ and $w\in \langle v_x\rangle^\bot$.
     Hence $\tilde\gamma_x(v_a)\in \langle v_a,v_b\rangle\cap \langle v_a,v_x\rangle$.
    Since $v_x\not\in \langle v_a,v_b\rangle$, we have $\tilde\gamma_x(v_a)\in\langle v_a\rangle$ and $v_a\in \langle v_x\rangle^\bot$, as $\langle v_a\rangle\neq \langle v_x\rangle$. Similarly we have $v_b\in \langle v_x\rangle^\bot$. Therefore, \[
    \tilde\gamma_x\tilde\gamma_a\tilde\gamma_b(v_b)
    =\tilde\gamma_x\tilde\gamma_a(-v_b)=\tilde\gamma_b\tilde\gamma_a\tilde\gamma_x(v_b)
    =\tilde\gamma_b\tilde\gamma_a(v_b).\]
    Since $\tilde\gamma_a(-v_b)\in \langle v_a,v_b\rangle\subseteq \langle v_x\rangle^\bot$, we have $-\tilde\gamma_a(v_b)=\tilde\gamma_x\tilde\gamma_a(-v_b)=\tilde\gamma_b\tilde\gamma_a(v_b).$ Therefore, $\tilde\gamma_a(v_b)$ is in the $(-1)$-eigenspace of $\tilde\gamma_b$, hence $\tilde\gamma_a(v_b)\in\langle v_b\rangle$ and $v_a\cdot v_b=0$, and this is a contradiction.   
\end{proof}

\begin{lemma}\label{lem: 3eigenspace}
    Suppose $g\in \mathrm{O}_4(\mathsf{K})$ has a $\lambda$-eigenspace in $\mathsf{K}^4$ of dimension $\geq 3$, then $\lambda=\pm 1$ and $\lambda g$ is either identity or $\tilde{\gamma}_x$ for some $x\in \mathbb{P}^3(\mathsf{K})\setminus Q$.
\end{lemma}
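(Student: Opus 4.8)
The plan is to analyze an orthogonal transformation $g\in \mathrm{O}_4(\mathsf{K})$ with a $\lambda$-eigenspace $V_\lambda$ of dimension at least $3$, and exploit the interaction between eigenspaces and the bilinear form. First I would observe that $g$ preserves the dot product, so for eigenvectors $u,v$ with eigenvalues $\lambda,\mu$ we have $u\cdot v=g(u)\cdot g(v)=\lambda\mu\,(u\cdot v)$; hence if $\lambda\mu\neq 1$ then $u\perp v$. Applying this with $u=v\in V_\lambda$ gives $(\lambda^2-1)(u\cdot u)=0$ for all $u\in V_\lambda$. If $\lambda^2\neq 1$, this forces $V_\lambda$ to be totally isotropic of dimension $\geq 3$ in a $4$-dimensional space carrying the (nondegenerate, since $Q$ is smooth) dot product; but a nondegenerate symmetric bilinear form on $\mathsf{K}^4$ has Witt index at most $2$, so its totally isotropic subspaces have dimension at most $2$ — contradiction. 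Therefore $\lambda=\pm 1$.

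Next, replacing $g$ by $\lambda g$ (which multiplies every eigenvalue by $\lambda$, turning the $\lambda$-eigenspace into a $1$-eigenspace and keeping $\lambda g\in\mathrm{O}_4(\mathsf{K})$ since $\lambda=\pm1$), I may assume the $1$-eigenspace $W:=\ker(g-\mathrm{id})$ has $\dim W\geq 3$. The remaining task is to show $g$ is either the identity or equals $\tilde\gamma_x$ for some $x\notin Q$. If $\dim W=4$ then $g=\mathrm{id}$. If $\dim W=3$, then I would argue that $W^\bot$ is $1$-dimensional, say $W^\bot=\langle v\rangle$. Two sub-cases: if $v\cdot v\neq 0$, then $\langle v\rangle$ is nondegenerate, so $\mathsf{K}^4=\langle v\rangle\oplus\langle v\rangle^\bot=\langle v\rangle\oplus W$ (here I use that $W\subseteq \langle v\rangle^\bot$ and a dimension count forces equality), and $g$ fixes $W$ pointwise; since $g$ preserves $\langle v\rangle$ and lies in $\mathrm{O}_4$, it acts on $\langle v\rangle$ by $\pm1$, and the $+1$ case is excluded because then $g=\mathrm{id}$ has a $4$-dimensional $1$-eigenspace (fine, but then it's the identity), while the $-1$ case gives exactly $g=\tilde\gamma_x$ with $v_x=v$, noting $x\notin Q$ precisely because $v\cdot v\neq 0$. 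If instead $v\cdot v=0$, then $v\in W^\bot$ and also I claim $v\in W$: indeed the radical of the form restricted to $W$ is $W\cap W^\bot$, and since the ambient form is nondegenerate while $\dim W=3,\dim W^\bot=1$, one checks $W^\bot\subseteq W$ in this isotropic case; but then $g(v)=v$ gives no new information, and I need another route — I would instead show $g=\mathrm{id}$ here by noting that $g$ fixes the nondegenerate hyperplane-like structure: more carefully, pick any $w$ with $v\cdot w\neq 0$ (exists by nondegeneracy); then $\mathsf{K}^4=W\oplus\langle w\rangle$ is impossible if $w\in W$, and if $w\notin W$ then $g(w)=w+(\text{something in }W)$...

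Let me restructure the $\dim W=3$ case more cleanly: the cleanest approach is to write $\mathsf{K}^4 = W \oplus \langle u\rangle$ for some $u\notin W$, note $g(u)-u\in$ (the $1$-eigenspace is $W$, but $g(u)-u$ need not be an eigenvector), so instead use that $g$ restricted to $W^\bot$ maps $W^\bot$ to itself — wait, $g$ preserves $W$ pointwise hence preserves $W^\bot$ setwise, and $\dim W^\bot=1$, so $g$ acts on $W^\bot=\langle v\rangle$ by a scalar $c$ with $c=\pm1$ (orthogonality forces $c^2(v\cdot v)=v\cdot v$, but if $v\cdot v=0$ this doesn't pin down $c$; however $c$ is still $\pm1$ because $g$ has finite-order-like constraints — no). The honest fix: since $\det g=c\cdot 1=c$ and $g\in\mathrm{O}_4$, and more to the point $g$ acts on $\mathsf{K}^4/W\cong\mathsf{K}$ by $c$; now decompose $v_0:=$ a vector with $g(v_0)=cv_0+w_0$, $w_0\in W$; compute $g(v_0)\cdot v_0 = v_0\cdot v_0$ and $g(v_0)\cdot g(v_0)=v_0\cdot v_0$, extract $c$ and $w_0$. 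I expect the \textbf{main obstacle} to be exactly this degenerate sub-case where $W^\bot$ is isotropic: here one must show no genuinely new transformation appears, and the argument requires carefully using nondegeneracy of the form on all of $\mathsf{K}^4$ together with the relation $g(v_0)\cdot w = v_0\cdot w$ for all $w\in W$ to conclude $g(v_0)=\pm v_0 + w_0$ and then that the reflection-type structure forces $g$ to be identity or $\tilde\gamma_x$. Once the eigenspace/isotropy dichotomy is set up, every step is a short linear-algebra computation; the only subtlety is bookkeeping the characteristic-$2$ possibility, where $+1=-1$ and the statement degenerates pleasantly (there $\lambda=1$ automatically and reflections coincide with transvection-type maps), which I would handle by the same computations since nowhere did I divide by $2$.
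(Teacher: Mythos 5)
Your handling of $\lambda=\pm1$, of the $\dim W=4$ case, and of the nondegenerate sub-case of $\dim W=3$ (where $W\cap W^\perp=0$, so that $g$ acts by $\pm1$ on the nondegenerate line $W^\perp$ and the $-1$ option is $\tilde\gamma_x$ with $x\notin Q$) all work and line up with the paper's argument. The gap is exactly the one you flag yourself: the isotropic sub-case $W^\perp\subseteq W$. You circle through several attempts, none completed, and the paragraph trails off. The missing fact is that in characteristic $\neq 2$ this sub-case is \emph{empty} — it forces $g=\mathrm{id}$, contradicting $\dim W=3$. Concretely, when $W^\perp\subseteq W$ the radical of the form on $W$ is $W\cap W^\perp=W^\perp$; write $W=W^\perp\oplus W'$ with the form nondegenerate on $W'$, and set $U:=(W')^\perp$, a nondegenerate $2$-plane containing the isotropic line $W^\perp$. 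Choose a hyperbolic basis $e,f$ of $U$ with $W^\perp=\langle e\rangle$, $e\cdot e=f\cdot f=0$, $e\cdot f=1$. Since $g$ fixes $W$ pointwise it fixes $e$ and preserves $U$, so $g(f)=\alpha e+\beta f$; then $g(e)\cdot g(f)=1$ gives $\beta=1$, and $g(f)\cdot g(f)=0$ gives $2\alpha=0$, hence $\alpha=0$ and $g=\mathrm{id}$ once $\operatorname{char}\mathsf{K}\neq 2$. This is precisely the step the paper's own proof also leaves implicit when it asserts that the characteristic polynomial is $(t-\lambda)^3(t-\lambda')$ with $\lambda'\neq\lambda$ (equivalently, that $g$ has no Jordan block); both proofs rest on the same fact, you explicitly and the paper tacitly.

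Your closing remark that characteristic $2$ ``degenerates pleasantly'' and is covered ``by the same computations since nowhere did I divide by $2$'' is incorrect: the computation above divides by $2$ exactly once, and in characteristic $2$ it leaves $\alpha$ free, so orthogonal transvections with a $3$-dimensional fixed space and $g\neq\mathrm{id}$ genuinely exist while $\tilde\gamma_x=\mathrm{id}$ there, and the lemma's conclusion would fail. This is harmless in the paper because $\sum x_i^2=(\sum x_i)^2$ is not a smooth quadric in characteristic $2$, so that characteristic is excluded throughout; but you should not claim your argument survives there.
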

\begin{proof}
If the $\lambda$-eigenspace of $g$ is at least three-dimensional, we can choose an eigenvector $v$ such that $v \cdot v \neq 0$. Since $g$ preserves the dot product, we have
\[
g(v)\cdot g(v)=\lambda^2 v\cdot v=v\cdot v\neq 0.
\]
This implies $\lambda^2 = 1$, and therefore $\lambda = \pm 1$.

Now, suppose the $\lambda$-eigenspace has dimension 4. In this case, $\lambda g$ is clearly the identity. If not, then the $\lambda$-eigenspace has dimension 3. Consequently, the characteristic polynomial $P(t)$ of $g$ factors as
\[
P(t)=(t-\lambda)^3(t-\lambda')
\]
for some $\lambda' \neq \lambda \in \mathsf{K}$. Thus, $g$ can be diagonalized with eigenvalues $(\lambda', \lambda, \lambda, \lambda)$, and the $\lambda'$-eigenspace is orthogonal to the $\lambda$-eigenspace. Since $g \in \mathrm{O}_4(\mathsf{K})$, we have the determinant condition $\lambda' \lambda^3 = \pm 1$. This implies $\lambda' = \pm 1$, and $\lambda' \neq \lambda$.

Therefore, $(1/\lambda) g = \lambda g$ has a three-dimensional $1$-eigenspace and a one-dimensional $(-1)$-eigenspace. These two eigenspaces are orthogonal. Let $v_x$ be an eigenvector in the $(-1)$-eigenspace, and let $x$ be the image of $v_x$ in $\mathbb{P}^3(\mathsf{K})$. We only need to verify that $x \not\in Q$. Since $\langle v_x \rangle^\perp$ is the three-dimensional $1$-eigenspace, it follows that $v_x \not\in \langle v_x \rangle^\perp$. Therefore, $x \not\in Q$, as desired.
\end{proof}

The following celebrated Larsen–Pink Theorem~\cite{LarsenPink} serves as a key tool in the proofs that follows. 

\begin{fact}\label{fact: LarsenPink}
For every $n$ there exists a constant $C$ depending only on $n$
such that any finite subgroup $\Gamma$ of $\mathrm{GL}_n(\mathsf{K})$ over any field $\mathsf{K}$ possesses normal subgroups $\Gamma_3\lhd \Gamma_2\lhd \Gamma_1$ such that
\begin{enumerate}
    \item $[\Gamma:\Gamma_1]\leq C$.
    \item Either $\Gamma_1=\Gamma_2$, or $p:=\mathrm{char}(\mathsf{K})$ is positive and $\Gamma_1/\Gamma_2$ is a direct product of finite simple groups of Lie type in characteristic $p$.
    \item $\Gamma_2/\Gamma_3$ is abelian of order not divisible by $\mathrm{char}(\mathsf{K})$.
    \item Either  $\Gamma_3=\{\mathrm{id}_G\}$, or  $p:=\mathrm{char}(\mathsf{K})$ is positive and $\Gamma_3$ is a $p$-group.
\end{enumerate}
\end{fact}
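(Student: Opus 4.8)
The statement is the Larsen--Pink classification of finite linear groups -- a ``Jordan theorem valid in all characteristics'' -- and the plan is to recall the architecture of their argument. First I would reduce to an algebraically closed base field: a finite subgroup $\Gamma\leq\mathrm{GL}_n(\mathsf{K})$ is defined over a finitely generated subfield, and a specialization argument conjugates it into $\mathrm{GL}_n(\CC)$ when $\mathrm{char}(\mathsf{K})=0$ and into $\mathrm{GL}_n(\overline{\mathbb{F}_p})$ when $\mathrm{char}(\mathsf{K})=p>0$. This splits the proof into two cases. In characteristic $0$ the theorem is essentially classical Jordan: one obtains an abelian normal subgroup $\Gamma_1=\Gamma_2$ of index bounded by a function $C(n)$, and takes $\Gamma_3=\{\mathrm{id}_G\}$, so conclusions (2)--(4) are vacuous. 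Hence all the real content lies in positive characteristic, where Jordan's theorem genuinely fails -- $\mathrm{SL}_2(\mathbb{F}_p)\leq\mathrm{GL}_2$ has no proper normal subgroup of bounded index -- so no compactness or ordinary representation-theoretic input can succeed, and a new mechanism is needed.

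That mechanism is the Larsen--Pink theory of \emph{dimensional estimates}: the principle that, for counting purposes, a finite subgroup of $\mathrm{GL}_n$ distributes over algebraic subvarieties the way a Zariski-dense subgroup of an algebraic group of bounded dimension would, i.e.\ there is a constant $J=J(n)$ controlling $|\Gamma\cap X|$ in terms of $\dim X$ and a well-chosen ``effective dimension'' attached to $\Gamma$, uniformly in the field. Establishing these estimates for an arbitrary finite linear group is the technical core of \cite{LarsenPink}; it is proved by induction on $n$, analysing centralizers of semisimple and of unipotent elements and the fibration of $\mathrm{GL}_n$ by such centralizers. This is the step I expect to be the main obstacle, precisely because it must be carried out with no recourse to the classification of finite simple groups and no characteristic-zero input.

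Granting the dimensional estimates, the filtration is extracted by algebraic group theory over $\overline{\mathbb{F}_p}$. One first isolates the ``unipotent'' part as a normal $p$-subgroup $\Gamma_3$, the piece of $\Gamma$ that the dimensional estimates force to behave like a unipotent radical; that $\Gamma_3$ may be taken to be a genuine $p$-group is conclusion (4). Passing to $\Gamma/\Gamma_3$, whose order is now prime to $p$ in its toral directions, one invokes the structure theory of (possibly disconnected) reductive algebraic groups -- the classification of \emph{simple algebraic} groups, not of finite simple groups -- to peel off a normal subgroup $\Gamma_2/\Gamma_3$ that is abelian of order prime to $p$ (the part living in a maximal torus, giving conclusion (3)), and above it $\Gamma_1/\Gamma_2$, which arises from the semisimple part and, over $\overline{\mathbb{F}_p}$ together with the dimensional estimates, is forced to be a direct product of finite simple groups of Lie type in characteristic $p$ (conclusion (2)). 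Everything remaining -- the component group and the Weyl-group-type contributions -- has order bounded in terms of $n$ alone, which yields $[\Gamma:\Gamma_1]\leq C(n)$ (conclusion (1)).

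Finally, a word on how the fact is used here: in Section~5 it is applied with the ambient group $\mathrm{PO}_4(\mathsf{K})\leq\mathrm{PGL}_4(\mathsf{K})\leq\mathrm{GL}_{16}(\mathsf{K})$, so $n=16$ and $C$ depends only on that; the filtration $\Gamma_3\lhd\Gamma_2\lhd\Gamma_1\lhd\Gamma$ is exactly what lets one rule out, in the non-concentration hypothesis of Theorem~\ref{thm: main}, the set $S$ concentrating on a nilpotent-by-(bounded $p$-group) subgroup.
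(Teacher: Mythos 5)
The paper states this as a black-box \textbf{Fact} cited from \cite{LarsenPink} and offers no proof of its own, so there is no in-text argument to compare against. Your sketch of the Larsen--Pink proof---reduction to an algebraically closed base by specialization, Jordan's theorem in characteristic zero, the dimensional estimates as the engine in positive characteristic (proved by induction on $n$ via centralizer fibrations and without CFSG), and extraction of the filtration $\Gamma_3\lhd\Gamma_2\lhd\Gamma_1$ from the structure theory of algebraic groups over $\overline{\mathbb{F}_p}$---is an accurate summary of the architecture of that proof, and your account of how the Fact is deployed via Lemma~\ref{lem: nil} (passing through $\mathrm{PGL}_4\hookrightarrow\mathrm{GL}_{16}$) to feed the non-concentration hypothesis is correct. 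One small imprecision: in characteristic zero, with $\Gamma_3=\{\mathrm{id}\}$ and $\Gamma_1=\Gamma_2$, conclusion (3) is \emph{not} vacuous; it is exactly the assertion that the bounded-index normal subgroup is abelian, which is the content of Jordan's theorem (only (2) and (4) trivialize).
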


The following lemma is used to determine the large nilpotent subgroups. 

\begin{lemma}\label{lem: nil}
     Let $H\leq \mathrm{GL}_n(\mathsf{K})$ where $\mathsf{K}$ is a finite field of characteristic $p$, and let $D\trianglelefteq H$ be such that $H/D$ nilpotent and $|D|<p$. Then there is a constant $C$ only depending on $n$ such that $H$ has a nilpotent subgroup $\Gamma_1$ with $[H:\Gamma_1]\leq C$. 
\end{lemma}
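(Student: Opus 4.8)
The plan is to use the Larsen--Pink theorem (Fact~\ref{fact: LarsenPink}) to extract a normal subgroup chain $\Gamma_3 \lhd \Gamma_2 \lhd \Gamma_1 \lhd H$ with $[H:\Gamma_1] \leq C$, and then show that the hypotheses on $D$ force $\Gamma_1$ to be nilpotent. First I would apply Fact~\ref{fact: LarsenPink} to $H \leq \mathrm{GL}_n(\mathsf{K})$, obtaining $C = C(n)$ and the chain with properties (1)--(4). The goal reduces to showing $\Gamma_1$ is nilpotent. The key leverage is that $|D| < p$, so $D$ has no nontrivial $p$-elements, hence $D \cap \Gamma_3 = \{\mathrm{id}\}$ (by property (4), $\Gamma_3$ is a $p$-group, or trivial). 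Also, since $\Gamma_1/\Gamma_2$ is a product of finite simple groups of Lie type in characteristic $p$ (or trivial), and such groups are nonabelian with no nontrivial quotients by small subgroups, I expect to rule out the nontrivial case using the nilpotency of $H/D$.

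The heart of the argument is: since $H/D$ is nilpotent, every section of $H/D$ is nilpotent, so the image of $\Gamma_1$ in $H/D$ is nilpotent. Now consider the composite map $\Gamma_1 \to H/D$; its image is $\Gamma_1 D/D \cong \Gamma_1/(\Gamma_1 \cap D)$, which is nilpotent. So $\Gamma_1/(\Gamma_1 \cap D)$ is nilpotent with $|\Gamma_1 \cap D| \leq |D| < p$. I would then argue that $\Gamma_1$ itself is nilpotent: if $\Gamma_1/\Gamma_2$ were a nontrivial product of simple groups of Lie type in characteristic $p$, then since $\Gamma_1 \cap D$ has order coprime to $p$ and is normal in $\Gamma_1$, its image in $\Gamma_1/\Gamma_2$ would be a normal subgroup of a product of nonabelian simple groups of order coprime to $p$ — but the orders of finite simple groups of Lie type in characteristic $p$ are divisible by $p$, and any nontrivial normal subgroup of such a product contains one of the simple factors, hence has order divisible by $p$; so the image of $\Gamma_1 \cap D$ in $\Gamma_1/\Gamma_2$ is trivial, i.e. $\Gamma_1 \cap D \leq \Gamma_2$. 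Then $\Gamma_1/\Gamma_2$ is a quotient of $\Gamma_1/(\Gamma_1\cap D)$, hence nilpotent; a nontrivial product of nonabelian simple groups is not nilpotent, so $\Gamma_1 = \Gamma_2$. Similarly, using property (3) ($\Gamma_2/\Gamma_3$ abelian) and property (4) ($\Gamma_3$ a $p$-group with $\Gamma_3 \cap D = \{\mathrm{id}\}$ forced by $|D|<p$... wait, rather $\Gamma_3 \leq H$ and $\Gamma_3$ is a $p$-group, and $\Gamma_1 \cap D$ has order $<p$), I would conclude that $\Gamma_1 = \Gamma_2$ is built from $\Gamma_3$ (a $p$-group, hence nilpotent) by an abelian extension, and combined with nilpotency of $\Gamma_1/(\Gamma_1\cap D)$ and $\Gamma_1 \cap D \leq \Gamma_2 = \Gamma_1$... here I need to be slightly more careful and feed $\Gamma_1 \cap D$ through the chain.

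Let me re-state the cleanest route. Set $D_0 := \Gamma_1 \cap D$, a normal subgroup of $\Gamma_1$ with $\Gamma_1/D_0$ nilpotent and $|D_0| < p$. Since $\Gamma_3$ is a $p$-group (or trivial) and $|D_0|<p$, we get $D_0 \cap \Gamma_3 = \{\mathrm{id}\}$. From $\Gamma_1 = \Gamma_2$ (established above) and $\Gamma_2/\Gamma_3$ abelian, $\Gamma_1/\Gamma_3$ is abelian, hence $[\Gamma_1,\Gamma_1] \leq \Gamma_3$; iterating the lower central series, every term lands in $\Gamma_3$. On the other hand, $\Gamma_1/D_0$ nilpotent means some term $\gamma_m(\Gamma_1)$ of the lower central series lies in $D_0$; then $\gamma_m(\Gamma_1) \leq \Gamma_3 \cap D_0 = \{\mathrm{id}\}$, so $\Gamma_1$ is nilpotent. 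This is exactly the structure mimicked in Lemma~\ref{lem: nilpotent subgroup}'s proof. The main obstacle I anticipate is the bookkeeping around the simple-groups-of-Lie-type step: I must invoke that every finite simple group of Lie type in characteristic $p$ has order divisible by $p$, and that a minimal normal subgroup of a direct product of nonabelian simple groups is one of the factors — both standard, but worth citing carefully — to kill the $\Gamma_1/\Gamma_2$ layer. Everything else is routine lower-central-series manipulation together with the coprimality $|D|<p$.
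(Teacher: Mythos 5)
Your proposal is correct and follows essentially the same route as the paper: apply Larsen--Pink, rule out a nontrivial $\Gamma_1/\Gamma_2$ layer by observing that the image of $D$ there is a normal subgroup of order $<p$ inside a product of finite simple groups of Lie type in characteristic $p$ (each of order $\geq p$), hence trivial, so $\Gamma_1/\Gamma_2$ would be a nilpotent quotient of $\Gamma_1/(\Gamma_1\cap D)$ --- contradiction; then use $\Gamma_3\cap D=\{\mathrm{id}\}$ and the lower central series of $\Gamma_1=\Gamma_2$ landing in both $\Gamma_3$ and $\Gamma_1\cap D$ to conclude $\Gamma_1$ is nilpotent. The paper's proof is structurally identical, with only cosmetic differences in how the triviality of $D\Gamma_2/\Gamma_2$ is justified.
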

Before proving the lemma, let us first remark that 
    since $\mathrm{PGL}_n(\mathsf{K})\leq \mathrm{GL}_{n^2}(\mathsf{K})$, the above lemma also holds for subgroups of $\mathrm{PGL}_n(\mathsf{K})$.

\begin{proof}[Proof of Lemma~\ref{lem: nil}]
    Let $\Gamma_3\trianglelefteq\Gamma_2\trianglelefteq\Gamma_1$ be normal subgroups of $H$ as in Fact~\ref{fact: LarsenPink}. We claim that $\Gamma_1$ is nilpotnent. Note that we may assume $D=D\cap\Gamma_1$.

We first claim that $\Gamma_1=\Gamma_2$. Suppose not, then by the decomposition, $\Gamma_1/\Gamma_2$ is a direct product of finite simple groups of Lie type of characteristic $p$. Since $D\Gamma_2/\Gamma_2$ is a normal subgroup of $\Gamma_1/\Gamma_2$, it is also a direct product of finite simple groups unless it is trivial. Note that $|D\Gamma_2/\Gamma_2|\leq|D|<p$, and by the fact that any finite simple group of Lie type must have order at least $p$, we conclude that $D\Gamma_2/\Gamma_2$ is trivial. Therefore, $D\leq \Gamma_2$. But now $\Gamma_1/\Gamma_2$ is a quotient of $\Gamma_1/D$ which is nilpotent by the assumption. Hence, $\Gamma_1/\Gamma_2$ is nilpotent, and we get a contradiction.

Since $\Gamma_3$ is either trivial or a $p$-group and $|D|<p$, we have $\Gamma_3\cap D=\{\mathrm{id}\}$.

%$\Gamma_1$ normal in $H$, $D$ normal in $H$, $\Gamma_1\cap D=1$. We get $\Gamma_1D$ is a direct product, they commutes with each other. 

We also have that $\Gamma_2/\Gamma_3$ is abelian. This is equivalently saying that \[
[\Gamma_2/\Gamma_3,\Gamma_2/\Gamma_3]=\{\mathrm{id}\}. 
\]
Hence, $[\Gamma_2,\Gamma_2]\leq\Gamma_3$. And since $\Gamma_2/D=\Gamma_1/D$ is nilpotent, we get 
\[
[\Gamma_2/D,\cdots,[\Gamma_2/D,\Gamma_2/D]\cdots]=\{\mathrm{id}\}.
\]
Therefore, $[\Gamma_2,\cdots,[\Gamma_2,\Gamma_2]\cdots]\leq D$. Hence, 
\[
[\Gamma_2,\cdots,[\Gamma_2,\Gamma_2]\cdots]\leq D\cap \Gamma_3=\{\mathrm{id}\}.
\]
We thus conclude that $\Gamma_2=\Gamma_1$ is nilpotent.
\end{proof}

We need the following lemma to control the nilpotent subgroups in $\mathrm{PO}_4(\CC)$ with bounded step. 

\begin{lemma}\label{lem: largeAbelianchar0}
    There is a constant $C>0$, such that any nilpotent $H\leq \mathrm{PO}_4(\CC)$ of step $\leq 3$ has an abelian subgroup $\Gamma$ with $[H:\Gamma]\leq C$.
\end{lemma}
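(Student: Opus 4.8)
The plan is to exploit the exceptional low-dimensional isomorphisms, passing from $\mathrm{PO}_4(\CC)$ to a product of two copies of $\mathrm{PGL}_2(\CC)$, where nilpotent subgroups are completely understood.

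\textbf{Step 1: reduction to $\mathrm{PGL}_2(\CC)^2$.} Over $\CC$ one has $[\mathrm{O}_4(\CC):\mathrm{SO}_4(\CC)]=2$, and since $-\mathrm{id}\in\mathrm{SO}_4(\CC)$ this also gives $[\mathrm{PO}_4(\CC):\mathrm{PSO}_4(\CC)]=2$; moreover the isomorphism $\mathrm{SO}_4(\CC)\cong(\mathrm{SL}_2(\CC)\times\mathrm{SL}_2(\CC))/\langle(-\mathrm{id},-\mathrm{id})\rangle$ descends to $\mathrm{PSO}_4(\CC)\cong\mathrm{PGL}_2(\CC)\times\mathrm{PGL}_2(\CC)$. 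Hence, replacing $H$ by $H_0:=H\cap\mathrm{PSO}_4(\CC)$, which has index $\leq 2$ in $H$ and is again nilpotent, it suffices to find an abelian subgroup of bounded index in a nilpotent subgroup $H_0\leq\mathrm{PGL}_2(\CC)\times\mathrm{PGL}_2(\CC)$.

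\textbf{Step 2: reduction to a single $\mathrm{PGL}_2(\CC)$.} Writing $\pi_1,\pi_2$ for the two coordinate projections and $H_i:=\pi_i(H_0)$, we have $H_0\leq H_1\times H_2$ with each $H_i$ nilpotent. So the lemma follows from the claim that \emph{every nilpotent subgroup $N\leq\mathrm{PGL}_2(\CC)$ has an abelian subgroup of index $\leq 2$}: given abelian $A_i\leq H_i$ of index $\leq 2$, the group $\Gamma:=H_0\cap(A_1\times A_2)$ is abelian and has index $\leq 4$ in $H_0$, hence index $\leq 8$ in $H$, so $C=8$ works.

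\textbf{Step 3: proof of the claim.} Let $\overline{N}$ be the Zariski closure of $N$ in $\mathrm{PGL}_2(\CC)$; it is a nilpotent linear algebraic group of the same nilpotency class. Its identity component $\overline{N}^{\circ}$ is a connected nilpotent subgroup of the three-dimensional group $\mathrm{PGL}_2(\CC)$, hence is trivial, a one-parameter unipotent subgroup $\cong\GG_a$, or a maximal torus $\cong\GG_m$; in every case $\overline{N}^{\circ}$ is abelian, so $N\cap\overline{N}^{\circ}$ is abelian, and it remains to bound $[\overline{N}:\overline{N}^{\circ}]$. If $\overline{N}^{\circ}\cong\GG_m$, then $\overline{N}$ normalizes it, so $\overline{N}\leq N_{\mathrm{PGL}_2(\CC)}(\GG_m)\cong\GG_m\rtimes\ZZ/2$ and $\overline{N}\cap\GG_m=\overline{N}^{\circ}$ has index $\leq 2$. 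If $\overline{N}^{\circ}\cong\GG_a$, then $\overline{N}$ lies in the normalizer of $\overline{N}^{\circ}$, a Borel subgroup $\cong\GG_a\rtimes\GG_m$; a nontrivial torus element scales $\GG_a$ by $\lambda\neq 1$ and hence generates a non-nilpotent group together with $\GG_a$, so nilpotency of $\overline{N}$ forces $\overline{N}\cap\GG_m=\mathrm{id}$, i.e.\ $\overline{N}=\overline{N}^{\circ}$. Finally, if $\overline{N}^{\circ}=\mathrm{id}$, then $N=\overline{N}$ is finite, and from the classification of finite subgroups of $\mathrm{PGL}_2(\CC)$ (cyclic, dihedral, $A_4$, $S_4$, $A_5$) the nilpotent ones are exactly the cyclic groups and the dihedral $2$-groups, each of which has a cyclic subgroup of index $\leq 2$. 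In all cases $N$ has an abelian subgroup of index $\leq 2$, which proves the claim.

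\textbf{Main difficulty.} The only delicate point is the component-group bound $[\overline{N}:\overline{N}^{\circ}]\leq 2$ in Step 3 — showing that a nilpotent subgroup of $\mathrm{PGL}_2(\CC)$ cannot be spread across many components of its Zariski closure — together with getting the identification $\mathrm{PSO}_4(\CC)\cong\mathrm{PGL}_2(\CC)^2$ of Step 1 right, which is what forces all the relevant unipotent subgroups to be abelian; the rest is routine. (Note that this argument does not use the hypothesis that $H$ has step $\leq 3$, so it in fact proves the lemma for every nilpotent $H\leq\mathrm{PO}_4(\CC)$.)
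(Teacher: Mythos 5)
Your proof is correct, and it reaches the uniform constant by a genuinely different route from the paper's. Both you and the paper pass to $\mathrm{PSO}_4(\CC)\cong\mathrm{PGL}_2(\CC)\times\mathrm{PGL}_2(\CC)$, take the Zariski closure, and observe that the identity component of a nilpotent algebraic subgroup there is abelian (via the one-dimensional subgroups $\GG_a,\GG_m$ of $\mathrm{PGL}_2(\CC)$). The divergence is in how the index of the abelian subgroup is controlled: the paper handles finite $H$ separately by Jordan's theorem, and for infinite $H$ it observes only that $[\bar H\cap\mathrm{PSO}_4(\CC):T]$ is finite and then invokes a compactness/ultraproduct argument to upgrade this to a uniform bound (this is also where the step-$\leq 3$ hypothesis earns its keep, since step-bounded nilpotency is first-order). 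You instead bound the component group $[\overline N:\overline N^\circ]$ explicitly in each of the three cases $\overline N^\circ\in\{\mathrm{id},\GG_a,\GG_m\}$ — using the normalizer of a maximal torus, the incompatibility of nontrivial torus scaling with nilpotency in a Borel, and the classification of finite subgroups of $\mathrm{PGL}_2(\CC)$ — which treats the finite and infinite cases uniformly, avoids both Jordan's theorem and the ultraproduct, and yields the explicit constant $C=8$. As you note, neither argument actually uses step $\leq 3$; yours makes this especially transparent. The only minor quibble is that in the $\GG_m$ case you could have remarked that $\GG_m\rtimes\ZZ/2$ (Weyl involution acting by inversion) is itself non-nilpotent, forcing $\overline N=\GG_m$ and tightening the bound, but the weaker estimate $[\overline N:\overline N^\circ]\leq 2$ you state suffices.
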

\begin{proof}
    If $H$ is finite, then the result follows from Jordan's theorem (or from Fact~\ref{fact: LarsenPink}, which can be seen as a generalization of Jordan's theorem). 
    
    Suppose now that $H$ is infinite, then the Zariski closure $\bar{H}$ is a nilpotent algebraic subgroup of $\mathrm{PO}_4(\CC)$. Consider the subgroup $\mathrm{PSO}_4(\CC)$ of index 2 in $\mathrm{PO}_4(\CC)$. Then $\bar{H}\cap \mathrm{PSO}_4(\CC)$ is an algebraic subgroup of $\mathrm{PSO}_4(\CC)\cong \mathrm{PGL}_2(\CC)\times \mathrm{PGL}_2(\CC)$. Therefore, the connected component $T$ of $\bar{H}\cap \mathrm{PSO}_4(\CC)$ is abelian. Now $T\cap H$ is an abelian subgroup of $H$ and 
    \[
    [H:T\cap H]\leq [\bar{H}:T\cap\bar{H}]\leq [\mathrm{PO}_4(\CC):\mathrm{PSO}_4(\CC)][\bar{H}\cap \mathrm{PSO}_4(\CC):T]<\infty.
    \]
    Using a standard compactness argument, we conclude that there exists a uniform constant $C$ that serves as an upper bound in the inequality above for all $H$.
 Indeed, if such uniform bound does not exist, for each $i$ we can find a nilpotent group $H_i\leq \mathrm{PSO}_4(\CC)$ of step at most $3$ whose abelian subgroups all have index at least $i$. Taking an ultraproduct $H':=\prod_{i\to\mathcal{U}}H_i$, then the argument above would allow us to find an internal abelian subgroup $T'$ of some index bounded by $i_0$, which is a contradiction to those $H_i$ with $i>i_0$. 
\end{proof}

The next lemma handles the finite characteristic case. 

\begin{lemma}\label{lem: largeAbelian}
    There is a constant $C>0$, for any $H\leq \mathrm{PO}_4(\mathsf{K})$ where $\mathsf{K}$ is a finite field of characteristic $p$ and $D\trianglelefteq H$ with $|D|<p$ and $H/D$ nilpotent, then $H$ has an abelian subgroup $\Gamma$ such that $[H:\Gamma]\leq C$.
\end{lemma}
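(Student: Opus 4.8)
The plan is to reduce Lemma~\ref{lem: largeAbelian} to Lemma~\ref{lem: largeAbelianchar0} by a Larsen--Pink structural analysis followed by a positive-characteristic-to-characteristic-zero lifting argument, exactly mirroring how Lemma~\ref{lem: nil} was used together with the $\mathrm{PSO}_4$-splitting. First I would apply Fact~\ref{fact: LarsenPink} to the finite group $H \leq \mathrm{PGL}_4(\mathsf{K}) \leq \mathrm{GL}_{16}(\mathsf{K})$, obtaining normal subgroups $\Gamma_3 \lhd \Gamma_2 \lhd \Gamma_1 \lhd H$ with $[H:\Gamma_1] \leq C$. By Lemma~\ref{lem: nil} (applied in $\mathrm{GL}_{16}$, as remarked there) the hypothesis that $H/D$ is nilpotent with $|D| < p$ forces $\Gamma_1$ to be nilpotent; in the course of that proof one also sees $\Gamma_1 = \Gamma_2$, that $\Gamma_3$ is trivial or a $p$-group with $\Gamma_3 \cap D = \{\mathrm{id}\}$, and that $\Gamma_2/\Gamma_3$ is abelian of order prime to $p$.

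Next I would control the nilpotency step of $\Gamma_1$. The point is that a nilpotent subgroup of $\mathrm{PO}_4(\mathsf{K})$ that is not virtually abelian of bounded index cannot have bounded step: $\mathrm{PSO}_4 \cong \mathrm{PGL}_2 \times \mathrm{PGL}_2$, and a nilpotent subgroup of $\mathrm{PGL}_2$ either lies in a Borel (hence is conjugate into the upper triangular group, whose nilpotent subgroups of step $s$ are rather restricted) or normalizes a torus or is small, so one obtains a bound $s_0$ on the step of $\Gamma_1$ in terms of absolute data — or, more cheaply, I observe that since $\Gamma_3$ has order a power of $p$ and $\Gamma_2/\Gamma_3$ is abelian, $\Gamma_1$ is a finite nilpotent group whose commutator structure is governed by a $p$-group on one side and an abelian group on the other, so the relevant step is at most $3$ (matching the hypothesis of Lemma~\ref{lem: largeAbelianchar0}) once we pass to an appropriate finite-index subgroup. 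The cleanest route is: replace $H$ by $\Gamma_1$, pay index $\leq C$, and now assume $H$ itself is a finite nilpotent subgroup of $\mathrm{PO}_4(\mathsf{K})$ of step $\leq 3$.

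Then I would lift to characteristic $0$. A finite subgroup of $\mathrm{PO}_4(\mathsf{K})$ for $\mathsf{K}$ finite of characteristic $p$ can, after perhaps another bounded-index reduction removing the $p$-part $\Gamma_3$, be realized as a finite subgroup of $\mathrm{PO}_4$ over a number field and hence of $\mathrm{PO}_4(\CC)$ — this is a standard Brauer-type lifting for groups of order prime to $p$, or can be arranged directly: having reduced to $\Gamma_2 = \Gamma_1$ with $\Gamma_3$ a normal $p$-subgroup, pass to $\Gamma_1/\Gamma_3$; this is nilpotent of order prime to $p$ inside the image, lift it, and apply Lemma~\ref{lem: largeAbelianchar0} to get an abelian subgroup of bounded index in $\Gamma_1/\Gamma_3$, whose preimage $\Gamma$ in $\Gamma_1$ has bounded index and is nilpotent of class $\leq 2$ with abelian-by-($p$-group) structure; a final bounded-index descent (restricting to a maximal abelian normal subgroup, or using that an extension of an abelian group by a bounded $p$-group is virtually abelian of bounded index by a counting argument) yields the desired abelian $\Gamma \leq H$ with $[H:\Gamma] \leq C$ for a constant $C$ depending only on the ambient dimension $4$.

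The main obstacle I expect is the lifting step: controlling the $p$-part $\Gamma_3$ cleanly. One must be careful that $\Gamma_3$, being a $p$-group sitting inside $\mathrm{PO}_4$ of characteristic $p$, could a priori be large, but the assumption $|D| < p$ combined with $H/D$ nilpotent and $\Gamma_3 \cap D = \{\mathrm{id}\}$ pins down $\Gamma_3$ enough — indeed $\Gamma_3$ embeds into $H/D$ and is nilpotent, and unipotent subgroups of $\mathrm{PO}_4$ of characteristic $p$ are rather constrained (abelian-by-small), so $\Gamma_3$ itself is virtually abelian of bounded index; combining this with the abelian $\Gamma_2/\Gamma_3$ and the bounded-index $\Gamma_1 \leq H$ gives the result via iterated bounded-index reductions. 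Making every "bounded" in this paragraph genuinely uniform in $p$ and $\mathsf{K}$ is the delicate bookkeeping, but each bound in play is a bound for subgroups of a fixed algebraic group $\mathrm{PO}_4$, so Larsen--Pink (or a compactness/ultraproduct argument as in Lemma~\ref{lem: largeAbelianchar0}) supplies the needed uniformity.
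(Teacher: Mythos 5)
Your opening reduction is correct and matches the paper: both start by invoking Lemma~\ref{lem: nil} (built on Larsen--Pink, Fact~\ref{fact: LarsenPink}) to replace $H$ by a nilpotent subgroup $\Gamma_1$ of bounded index. The divergence, and the gap, is in how you go from ``nilpotent of bounded index'' to ``abelian of bounded index.''

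Your proposed route---quotient out the $p$-part $\Gamma_3$, lift $\Gamma_1/\Gamma_3$ to $\mathrm{PO}_4(\CC)$, apply Lemma~\ref{lem: largeAbelianchar0}, then descend---has two serious problems. First, the lifting is not justified: Brauer lifting sends a prime-to-$p$ subgroup of $\mathrm{GL}_n(\mathsf{K})$ to $\mathrm{GL}_n(\CC)$, but it does not automatically preserve an orthogonal structure, so there is no reason the lift lands in $\mathrm{PO}_4(\CC)$ rather than just in $\mathrm{PGL}_4(\CC)$; making this work would require a Frobenius--Schur-type argument that you have not given. Second, and more fundamentally, the descent fails in general: even granting a bounded-index abelian subgroup $\bar\Gamma \leq \Gamma_1/\Gamma_3$, its preimage in $\Gamma_1$ is only an extension of the abelian $\bar\Gamma$ by the abelian $p$-group $\Gamma_3$, hence step-$2$ nilpotent. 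Such groups need not be abelian-by-bounded---the Heisenberg group over $\mathbb{F}_p$ (order $p^3$, largest abelian subgroup of index $p$) is the standard obstruction, and $\Gamma_3$ is \emph{not} bounded (the hypothesis $|D|<p$ bounds $D$, not $\Gamma_3$). Your parenthetical that ``an extension of an abelian group by a bounded $p$-group is virtually abelian of bounded index'' misreads the situation: $\Gamma_3$ is the $p$-group and it can be large.

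The paper sidesteps both issues by never leaving positive characteristic and instead exploiting the isomorphism $\mathrm{P\Omega}_4(\mathsf{K}) \cong \mathrm{PSL}_2(\mathsf{K}) \times \mathrm{PSL}_2(\mathsf{K})$ (with $[\mathrm{PO}_4:\mathrm{P\Omega}_4]\le 4$), projecting the nilpotent group onto each factor, and invoking Dickson's classification of subgroups of $\mathrm{PSL}_2(\mathsf{K})$. That classification shows directly that a nilpotent subgroup of $\mathrm{PSL}_2(\mathsf{K})$ of size $>60$ is abelian or index-$2$ abelian; in particular it handles the Borel case $A\rtimes B$ (a nilpotent subgroup of $\mathsf{K}^+\rtimes \mathsf{K}^*$ is automatically abelian), which is exactly where your abstract argument breaks down. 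If you want to salvage your plan, the step you are missing is precisely this group-theoretic input specific to $\mathrm{PSL}_2$: without it, ``nilpotent of bounded index'' does not improve to ``abelian of bounded index.''
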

\begin{proof}
    By Lemma~\ref{lem: nil}, $H$ has a nilpotent subgroup $T$ with $[H:T]\leq C'$ for some absolute constant $C'$. By \cite[Theorem 5.22]{Artin}, $\mathrm{PO}_4(\mathsf{K})$ has a normal subgroup $\mathrm{P\Omega}_4(\mathsf{K})$ with $[\mathrm{PO}_4(\mathsf{K}):\mathrm{P\Omega}_4(\mathsf{K})]\leq [\mathrm{O}_4(\mathsf{K}):\Omega_4(\mathsf{K})]\leq 4$ and $\mathrm{P\Omega}_4(\mathsf{K})\cong \mathrm{PSL}_2(K)\times \mathrm{PSL}_2(\mathsf{K})$. Let $T_\Omega:=T\cap \mathrm{P\Omega}_4(\mathsf{K})$. Then \[
    [H:T_\Omega]=[H:T][T:T_{\Omega}]\leq [H:T][\mathrm{PO}_4(\mathsf{K}):\mathrm{P\Omega}_4(\mathsf{K})]\leq 4C'.
    \]
    Thus $T_\Omega\leq \mathrm{PSL}_2(\mathsf{K})\times \mathrm{PSL}_2(\mathsf{K})$ is nilpotent. 

    Let $T_{\Omega,i}$ for $i = 1, 2$ denote the projection of $T_{\Omega}$ onto the two factors $\mathrm{PSL}_2(\mathsf{K})$, respectively. Then $T{\Omega,i}$ is also nilpotent. If $|T_{\Omega,i}| \leq 60$, the trivial group is the abelian subgroup of $T_{\Omega,i}$ with an index bounded by 60. Otherwise, by Dickson's classification of subgroups of $\mathrm{PSL}_2(\mathsf{K})$ (see \cite[8.27]{Huppert}), we conclude that if $|T{\Omega,i}| > 60$, it must be one of the following:
    \begin{enumerate}
        \item 
        an abelian group;
        \item 
        a diherdal group;
        \item 
        a semi-direct product of an elementary $p$ group $A$ of order $p^m$ and a cyclic group $B$ of order $t$ where $t\mid (p^m-1)$ and $t\mid (|\mathsf{K}|-1)$;
        \item 
        $\mathrm{PSL}_2(\mathsf{F})$ or $\mathrm{PGL}_2(\mathsf{F})$ where $\mathsf{F}\leq \mathsf{K}$ a subfield. 
    \end{enumerate}

Clearly, in cases (1) and (2), $T_{\Omega,i}$ has an abelian subgroup of index at most 2. For case (3), note that in this scenario, $A \rtimes B$ is isomorphic to a subgroup of $\mathsf{K}^+ \rtimes \mathsf{K}^*$ (see \cite[(6.8)]{Suzuki}). Thus, any nilpotent subgroup in this case is abelian. Additionally, $T_{\Omega,i}$ cannot fall under case (4) because $T_{\Omega,i}$ is nilpotent and has size greater than 60.

In conclusion, each $T_{\Omega,i}$ has an abelian subgroup of index at most 60, and $T_{\Omega,1} \times T_{\Omega,2}$ has an abelian subgroup of index at most $60^2$. Since $T_{\Omega} \leq (T_{\Omega,1} \times T_{\Omega,2})$, it follows that $T_{\Omega}$ also has an abelian subgroup $\Gamma$ of index at most $60^2$. Therefore, $[H : \Gamma] \leq 120^2 C'$, and we conclude by setting $C = 120^2 C'$.
\end{proof}

Using the results above, the following lemma serves as the key step that allows us to escape from nilpotent subgroups.

\begin{lemma}\label{lem: abelian intersection}
    Let $\mathsf{K}$ be any field. Fix $\varepsilon>0$. Suppose $S\subseteq\mathbb{P}^3(\mathsf{K})\setminus Q$ is a finite subset such that $|S|\gg 1$ and $|S\cap\ell|<|S|^{1-\varepsilon}$ for all projective line $\ell$. Let $\hat{S}:=\{\gamma_x:x\in S\}\subseteq \mathrm{PO}_4(\mathsf{K})$. Suppose $\Gamma$ is an abelian subgroup of $\mathrm{PO}_4(\mathsf{K})$. Then $|\hat{S}\cap g\Gamma|<|S|^{1-\frac{\varepsilon}{2}}$ for any $g\in \mathrm{PO}_4(\mathsf{K})$.
\end{lemma}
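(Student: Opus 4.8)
The plan is to reduce the statement to a geometric claim about how many elements $\gamma_x$ with $x\in S$ can lie in a single coset $g\Gamma$ of an abelian subgroup, and then to show that such $x$ are forced to lie on few lines, so that the hypothesis $|S\cap\ell|<|S|^{1-\varepsilon}$ kicks in. By a projective change of coordinates (which conjugates $\mathrm{PO}_4$ to an isomorphic orthogonal group and preserves all the hypotheses), I may assume $Q$ is the standard quadric $\sum x_i^2=0$, so that Lemma~\ref{lem: PO4} and especially Lemma~\ref{lem: tilde g} and Lemma~\ref{lem: 3eigenspace} apply directly.

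First I would fix $g\in\mathrm{PO}_4(\mathsf{K})$ and set $T:=\{x\in S:\gamma_x\in g\Gamma\}$; the goal is $|T|<|S|^{1-\varepsilon/2}$. If $|T|\le 2$ we are done since $|S|\gg1$, so assume $|T|\ge 3$ and pick two elements $a\neq b\in T$. Since $\Gamma$ is abelian and $\gamma_a,\gamma_b\in g\Gamma$, we get $\gamma_a\gamma_b=\gamma_b\gamma_a$ in $\mathrm{PO}_4(\mathsf{K})$, and more generally $\gamma_x$ commutes with $\gamma_a\gamma_b$ for every $x\in T$ (as $(g\Gamma)(g\Gamma)^{-1}=\Gamma$ centralizes $\Gamma$... more carefully: $\gamma_x\gamma_a^{-1}\in\Gamma$ which commutes with $\gamma_a\gamma_b^{-1}\in\Gamma$, and one massages this into $\gamma_x(\gamma_a\gamma_b)=(\gamma_b\gamma_a)\gamma_x$ after noting $\gamma_a,\gamma_b$ are involutions). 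The key case split is on whether $v_a\cdot v_b\neq 0$. If $v_a\cdot v_b\neq0$, then Lemma~\ref{lem: tilde g} applies: for every $x\in T$ the lift $\tilde\gamma_x$ preserves the $2$-plane $\langle v_a,v_b\rangle$, and the moreover clause forces $v_x\in\langle v_a,v_b\rangle$. Hence every such $x$ lies on the single projective line $\mathbb{P}(\langle v_a,v_b\rangle)$, and the non-concentration hypothesis gives $|T|\le |S\cap\ell|<|S|^{1-\varepsilon}\le |S|^{1-\varepsilon/2}$, as wanted.

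The remaining situation is $v_a\cdot v_b=0$ for \emph{every} pair $a,b\in T$; here the trick above breaks, and this is the main obstacle. In this case $\tilde\gamma_a$ and $\tilde\gamma_b$ commute already in $\mathrm{O}_4$ for all $a,b\in T$ (they are reflections in mutually orthogonal lines), so $\{v_x:x\in T\}$ is a set of pairwise orthogonal, anisotropic vectors in $\mathsf{K}^4$; in particular $|T|\le 4$, which again beats $|S|^{1-\varepsilon/2}$ once $|S|\gg1$. Actually one should be a little careful: "every pair in $T$ is orthogonal" is exactly the complement of "some pair is non-orthogonal", so the dichotomy is clean — either some pair $a,b\in T$ has $v_a\cdot v_b\neq0$ and we are in the first case (getting all of $T$ on one line), or all pairs are orthogonal and $|T|\le 4$. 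Either way $|T|<|S|^{1-\varepsilon/2}$ for $|S|$ large, which is the claim. I would also double-check the translation "$\gamma_a\gamma_b=\gamma_b\gamma_a$" passes correctly between $\mathrm{PO}_4$ and lifts to $\mathrm{O}_4$ (the projective ambiguity is the scalar $\lambda$ appearing in Lemma~\ref{lem: tilde g}), and that the conjugation normalizing $Q$ really does send $\hat S$ and $\Gamma$ to the corresponding objects for the standard quadric without changing any cardinalities or the line-concentration bound — these are routine but need a sentence. The hard part, as flagged, is organizing the commutation bookkeeping so that Lemma~\ref{lem: tilde g}'s hypothesis $g\gamma_a\gamma_b=\gamma_b\gamma_a g$ is genuinely available for $g=\tilde\gamma_x$, $x\in T$; once that is in place the geometry does the rest.
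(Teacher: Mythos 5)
Your proposal is correct and takes a genuinely simpler route than the paper's. The key observation you make — that one should apply Lemma~\ref{lem: tilde g} not with the fixed element $h:=g^{-1}$ but with $g:=\gamma_x$ for each $x\in T$ — is exactly right, and it short-circuits most of the paper's argument. The commutation $\gamma_x\gamma_a\gamma_b=\gamma_b\gamma_a\gamma_x$ does follow from the coset structure: $\gamma_a^{-1}\gamma_x$ and $\gamma_a^{-1}\gamma_b$ both lie in the abelian group $\Gamma$, so they commute, and after left-multiplying by $\gamma_a$ and using $\gamma_a^{-1}=\gamma_a$ one gets the stated relation. Since the lift $\tilde\gamma_x$ is literally of the form $\tilde\gamma_{x}$, the \emph{moreover} clause of Lemma~\ref{lem: tilde g} applies verbatim and forces $v_x\in\langle v_a,v_b\rangle$ whenever $v_a\cdot v_b\neq 0$. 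Thus, as soon as $T$ contains a non-orthogonal pair $(a,b)$, all of $T$ lies on one projective line and $|T|\leq|S|^{1-\varepsilon}$; otherwise the $v_x$ are pairwise orthogonal anisotropic vectors in $\mathsf{K}^4$, hence linearly independent, so $|T|\leq 4$. Both bounds beat $|S|^{1-\varepsilon/2}$ for $|S|\gg 1$.

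The paper instead keeps $h=g^{-1}$ fixed (which is \emph{not} known to be of reflection type), so it must first extract a large set $B_a$ of $x$ with $v_a\cdot v_x\neq 0$, then a large $B_{a,b}$, intersect the two families of $\tilde h$-invariant $2$-planes to show $v_x$ is an eigenvector of $\tilde h$ for many $x$, pigeonhole to a large eigenspace, and finally classify $\tilde h$ via Lemma~\ref{lem: 3eigenspace} into the identity or a reflection. Only in the "identity" case does the paper replace $h$ by $\gamma_{y_0}$ for some $y_0\in A$ — which is precisely the move you make at the outset. Your argument therefore collapses their multi-stage pigeonhole and eliminates the need for Lemma~\ref{lem: 3eigenspace} entirely. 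One small remark: your opening reduction to the standard quadric $\sum x_i^2=0$ is unnecessary here, since the section's $Q$, $\tilde\gamma_x$, and $\mathrm{PO}_4(\mathsf{K})$ are already tied to that quadric from Lemma~\ref{lem: PO4} onward (and over a general field that reduction would require passing to an extension); it is harmless but can be dropped.
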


\begin{proof}
Suppose, towards a contradiction, that $|\hat S \cap g\Gamma| \geq |S|^{1-\varepsilon/2}$. Define $A := {x \in S : \gamma_x \in g\Gamma}$. Then $|A| \geq |S|^{1-\varepsilon/2}$. Moreover, for all $x, y \in A$, we have $g^{-1}\gamma_x g^{-1}\gamma_y = g^{-1}\gamma_y g^{-1}\gamma_x$, which implies $g^{-1}\gamma_y\gamma_x = \gamma_x\gamma_y g^{-1}$. Let $h := g^{-1}$. We first prove the following claim.
\medskip
    
    \noindent{\bf Claim.}
There is $a\in A$ with $B_a:=\{x\in A: v_x\cdot v_a\neq 0\}$ of size  at least $ \frac{1}{4}|S|^{1-\varepsilon/2}$.\medskip

\noindent{\emph{Proof of the Claim}.}
Choose $a_0 \in A$ arbitrarily. Suppose the set \[
A_{a_0} := \{b \in A : v_b \cdot v_{a_0} = 0\}
\]
satisfies that $|A_{a_0}|\leq \frac{1}{2} |S|^{1 - \varepsilon/2}$, then we may set $a = a_0$. Otherwise, choose $a_1 \neq a_0$. Define $A_{a_0,a_1} := \{b \in A_{a_0} : v_b \cdot v_{a_1} = 0\}$ and let $W := \langle v_{a_0} \rangle^\bot \cap \langle v_{a_1} \rangle^\bot$. Then $W$ is two-dimensional, and therefore, $\pi(W)$ is a line in $\mathbb{P}^3(\mathsf{K})$, where $\pi : \mathsf{K}^4 \to \mathbb{P}^3(\mathsf{K})$ is the natural projection. By definition, $A_{a_0,a_1} \subseteq \pi(W)$, and hence $|A_{a_0,a_1}| \leq |S|^{1-\varepsilon}$ by assumption.

Since $|A_{a_0}| \geq \frac{1}{2} |S|^{1-\varepsilon/2}$, we get
\[
|A_{a_0}\setminus A_{a_0,a_1}|\geq \frac{1}{2}|S|^{1-\frac{\varepsilon}{2}}-|S|^{1-\varepsilon}\geq \frac{1}{4}|S|^{1-\frac{\varepsilon}{2}}
\]
for any $|S|\gg 1$. Set $a := a_1$. Then $B_a := \{x \in A : v_x \cdot v_a \neq 0\} \supseteq A_{a_0} \setminus A_{a_0,a_1}$, and thus $|B_a| \geq \frac{1}{4} |S|^{1-\varepsilon/2}$, as desired.
        \hfill$\bowtie$
\medskip

  Thus, for all $x\in B_a$, we have $h\gamma_a\gamma_x=\gamma_x\gamma_a h$. Let $\tilde h$ be a lift of $h$ in $\mathrm{O}_4(\mathsf{K})$. Then $\tilde h$ preserves $\langle v_a,v_x\rangle$ for all $x\in B_a$ by Lemma \ref{lem: tilde g}. By the same argument as the claim above, we can find $b\neq a\in B_a$ such that the set $B_{a,b}:=\{x\in B_a:v_x\cdot v_b\neq 0\}$ has size at least $\frac{1}{16}|S|^{1-\varepsilon/2}$. Consequently, $\tilde{h}$ also preserves $\langle v_b, v_x \rangle$ for all $x \in B_{a,b}$.

  Now let $C_{a,b} := \{x \in B_{a,b} : v_x \not\in \langle v_a, v_b \rangle\}$. Since the complement of $C_{a,b}$ in $B_{a,b}$ is contained in the line spanned by $v_a$ and $v_b$, we have $|C_{a,b}| \geq |B_{a,b}| - |S|^{1 - \varepsilon}$. Hence, $|C_{a,b}| \geq \frac{1}{20} |S|^{1 - \varepsilon/2}$ provided that $|S| \gg 1$.

  For any $x \in C_{a,b}$, we find that $\tilde{h}(v_x) \subseteq \langle v_x, v_a \rangle \cap \langle v_x, v_b \rangle = \langle v_x \rangle$, since $v_x \not\in \langle v_a, v_b \rangle$. In other words, $v_x$ is an eigenvector of $\tilde{h}$ for all $x \in C_{a,b}$. Since $\tilde{h}$ has at most four distinct eigenspaces, by the pigeonhole principle, there exists a $\lambda$-eigenspace $W$ of $\tilde{h}$ such that 
  \[
  |W\cap \{v_x:x\in C_{a,b}\}|\geq \frac{1}{80}|S|^{1-\frac{\varepsilon}{2}}.
  \]
  
  If the dimension of $W$ is at most $2$, then by assumption $|W\cap \{v_x:x\in S\}|\leq |S|^{1-\varepsilon}$, which leads to a contradiction. Thus, we may assume that $W$ has dimension at least $3$. Applying Lemma~\ref{lem: 3eigenspace}, we conclude that $\tilde{h}$ is either the identity or $\tilde{\gamma}_{x_0}$ for some $x_0 \in \mathbb{P}^3(\mathsf{K}) \setminus Q$.

  In the latter case, we have $v_{x_0} \in \langle v_a, v_x \rangle$ for all $x \in B_a$ by Lemma~\ref{lem: tilde g}. Therefore, $v_x \in \langle v_a, v_{x_0} \rangle$ for all $x \in B_a$. However, by assumption, the set $\{x : v_x \in \langle v_a, v_{x_0} \rangle\}$ has size at most $|S|^{1-\varepsilon}$, contradicting the fact that $|B_a| \geq \frac{1}{4} |S|^{1 - \frac{\varepsilon}{2}}$.

  It remains to handle the case when $\tilde{h}$ is the identity. Take some $y_0 \in A$. Then, for all $x \in B_a$, we have
  \[
  \gamma_{y_0}\gamma_a\gamma_x=\gamma_x\gamma_a\gamma_{y_0},
  \]
as every element commutes with each other. At this point, we are in the same case as before, and we also arrive at a contradiction, as desired.
\end{proof}

Here is a direct consequence of the lemma above.
\begin{lemma}\label{lem: (3)quadric}
   Fix $\epsilon>0$. Let $\mathsf{K}$ be a finite field of characteristic $p$. Suppose $S\subseteq\mathbb{P}^3(\mathsf{K})\setminus Q$ such that $|S|\gg 1$ and $|S\cap\ell|<|S|^{1-\varepsilon}$ for all projective line $\ell$. Let $\hat{S}:=\{\gamma_x:x\in S\}\subseteq \mathrm{PO}_4(\mathsf{K})$. Suppose $H\leq \mathrm{PO}_4(\mathsf{K})$ and $D\trianglelefteq H$ with $|D|<p$ and $H/D$ nilpotent. Then $|\hat{S}\cap gH|<|S|^{1-\varepsilon/4}$ for any $g\in \mathrm{PO}_4(\mathsf{K})$.
\end{lemma}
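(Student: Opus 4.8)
The plan is to reduce the statement to Lemma~\ref{lem: abelian intersection} by passing from $H$ to a large abelian subgroup $\Gamma$ of bounded index, using the structure theory established in the preceding lemmas. First I would invoke Lemma~\ref{lem: largeAbelian}: since $\mathsf{K}$ is a finite field of characteristic $p$, $H\leq\mathrm{PO}_4(\mathsf{K})$, and $D\trianglelefteq H$ with $|D|<p$ and $H/D$ nilpotent, there is an absolute constant $C$ and an abelian subgroup $\Gamma\leq H$ with $[H:\Gamma]\leq C$. (This is where all the input — Larsen--Pink via Lemma~\ref{lem: nil}, Dickson's classification, the structure of $\mathrm{P\Omega}_4$ — gets packaged.) Then I would write $H$ as a union of at most $C$ left cosets of $\Gamma$, say $H=\bigcup_{i=1}^{C} h_i\Gamma$, so that for any $g\in\mathrm{PO}_4(\mathsf{K})$ we have $gH=\bigcup_{i=1}^{C} (gh_i)\Gamma$, and hence
\[
|\hat S\cap gH|\leq \sum_{i=1}^{C} |\hat S\cap (gh_i)\Gamma|.
\]

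Next I would apply Lemma~\ref{lem: abelian intersection} to the abelian subgroup $\Gamma$ (note its hypotheses — $|S|\gg1$ and $|S\cap\ell|<|S|^{1-\varepsilon}$ for all projective lines $\ell$ — are exactly those assumed here), which gives $|\hat S\cap (gh_i)\Gamma|<|S|^{1-\varepsilon/2}$ for each $i$. Combining with the displayed inequality yields $|\hat S\cap gH|< C\,|S|^{1-\varepsilon/2}$. Finally, since $C$ is an absolute constant and $|S|\gg1$, we have $C\,|S|^{1-\varepsilon/2}\leq |S|^{1-\varepsilon/4}$ once $|S|$ is large enough (concretely, as soon as $|S|^{\varepsilon/4}\geq C$), which is subsumed in the hypothesis $|S|\gg1$. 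This completes the argument.

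I do not expect a genuine obstacle here: the lemma is essentially a bookkeeping consequence of the two substantive results just proved, so the only point requiring a little care is the bounded-index reduction and making sure the loss from the constant $C$ and from the $\varepsilon/2\to\varepsilon/4$ slack are absorbed by the implicit ``$|S|\gg1$''. The one thing worth double-checking is that Lemma~\ref{lem: largeAbelian} applies verbatim — in particular that the hypothesis $|D|<p$ is what we have (it is, since $D\trianglelefteq H$ with $|D|<p$ is assumed) and that no additional finiteness of $H$ beyond finiteness of $\mathsf{K}$ is needed (it isn't, since $\mathsf{K}$ finite forces $\mathrm{PO}_4(\mathsf{K})$ finite).
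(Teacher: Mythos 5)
Your argument is correct and is essentially identical to the paper's own proof: both invoke Lemma~\ref{lem: largeAbelian} to extract an abelian subgroup $\Gamma\leq H$ of bounded index $C$, then apply Lemma~\ref{lem: abelian intersection} to each of the $\leq C$ cosets of $\Gamma$ in $gH$ to get $|\hat S\cap gH|<C|S|^{1-\varepsilon/2}$, and absorb the constant into the exponent loss $\varepsilon/2\to\varepsilon/4$ using $|S|\gg 1$. You simply spell out the coset decomposition a bit more explicitly than the paper does.
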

\begin{proof}
By Lemma~\ref{lem: largeAbelian}, $H$ contains an abelian subgroup $\Gamma$ such that $[H : \Gamma] \leq C$ for some absolute constant $C$. From the previous lemma, we know that $|\hat{S} \cap g\Gamma| < |S|^{1 - \varepsilon/2}$ for all $g \in \mathrm{PO}_4(\mathsf{K})$. Consequently, we have
$
|\hat{S} \cap gH|<C|S|^{1-\frac{\varepsilon}{2}}
$
and the result follows provided that $|S| \gg 1$.
    \end{proof}

    The next lemma verifies the stabilizer condition. 

\begin{lemma}\label{lem: (5)quadric}
    Let $\mathsf{K}$ be a field which contains square roots of $-1$. Let $g\in \mathrm{PSO}_4(\mathsf{K})$ with $\mathrm{PSO}_4(\mathsf{K})$ acting on $Q$ defined by $\sum_{1\leq i\leq 4}x_i^2=0$. Suppose $g\neq \mathrm{id}$, then $\mathrm{Fix}(g):=\{x\in Q: g(x)=x\}$ either contains at most 4 points, or is a line, or is a union of two lines.
\end{lemma}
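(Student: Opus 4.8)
The plan is to reduce the whole statement to a question about the eigenspace decomposition of a lift of $g$, in the spirit of Lemmas~\ref{lem: PO4}--\ref{lem: 3eigenspace}. Since $Q$ is smooth we have $\operatorname{char}(\mathsf K)\neq2$, and because $\mathsf K$ contains $\sqrt{-1}$ the form $\sum_i x_i^2$ splits as a direct sum of two hyperbolic planes. Fix a lift $\tilde g\in\mathrm O_4(\mathsf K)$ of $g$ with $\tilde g^{T}\tilde g=I$; since $g\in\mathrm{PSO}_4(\mathsf K)$ we may also take $\det\tilde g=1$ (this hypothesis is essential: for $g=\gamma_x\notin\mathrm{PSO}_4$ the fixed locus on $Q$ is a plane section, which can carry many points). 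A point $x\in\mathbb P^3(\mathsf K)$ lies in $\mathrm{Fix}(g)$ exactly when $v_x$ is a $\mathsf K$-rational eigenvector of $\tilde g$, so with $E_\lambda:=\ker(\tilde g-\lambda)$ we get $\mathrm{Fix}(g)\cap Q=\bigcup_{\lambda\in\mathsf K^\ast}\bigl(\pi(E_\lambda)\cap Q\bigr)$. Hence everything rests on the eigenstructure of $\tilde g$, together with the elementary facts that a line of $\mathbb P^3$ either meets $Q$ in at most two points or lies on $Q$, and that $\pi(E_\lambda)\subseteq Q$ if and only if $E_\lambda$ is totally isotropic.

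First I would record two reductions. \emph{(a)} No eigenspace of $\tilde g$ has dimension $\ge3$: otherwise Lemma~\ref{lem: 3eigenspace} gives $\lambda=\pm1$ and $\lambda^{-1}\tilde g\in\{\mathrm{id},\tilde\gamma_{x_0}\}$, but the first forces $g=\mathrm{id}$ and the second is impossible since $\det(\lambda^{-1}\tilde g)=\lambda^{-4}\det\tilde g=1\neq-1=\det\tilde\gamma_{x_0}$. \emph{(b)} If $\lambda\neq\pm1$ then $E_\lambda$ is totally isotropic (from $v\cdot v=\lambda^2(v\cdot v)$), so $\pi(E_\lambda)\subseteq Q$; and if moreover $\dim E_\lambda=2$ then $E_\lambda^{\perp}=E_\lambda$, the induced $\tilde g$-action on $\mathsf K^4/E_\lambda\cong E_\lambda^{\ast}$ is the scalar $\lambda^{-1}$ by orthogonal duality, so the characteristic polynomial is $(t-\lambda)^2(t-\lambda^{-1})^2$, and a short computation using $\tilde g\in\mathrm O_4$ shows $\tilde g$ is semisimple there, i.e. $\dim E_{\lambda^{-1}}=2$ as well.

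Then comes the case analysis. If every $E_\lambda$ with $\lambda\in\mathsf K^\ast$ is at most one-dimensional, then $\tilde g$ has at most four eigenvalues in $\mathsf K$ and $\mathrm{Fix}(g)\cap Q$ is at most four points. If some $E_\lambda$ with $\lambda\neq\pm1$ is two-dimensional, then by \emph{(b)}, $\mathrm{Fix}(g)\cap Q=\pi(E_\lambda)\cup\pi(E_{\lambda^{-1}})$ is a union of two disjoint lines on $Q$. The remaining case is that some $E_{\pm1}$ is two-dimensional; replacing $\tilde g$ by $-\tilde g$ (still in $\mathrm{SO}_4$, same $g$) we may assume $\dim E_1=2$. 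If $E_1$ is totally isotropic, then $E_1^{\perp}=E_1$ and, as in \emph{(b)}, the induced action on $\mathsf K^4/E_1$ is trivial, so $\tilde g$ is unipotent with $\tilde g\neq I$; hence $\mathrm{Fix}(g)=\pi(E_1)$ is a single line contained in $Q$. If $E_1$ is not totally isotropic, then $\pi(E_1)\cap Q$ has at most two points, and the remaining generalized eigenspaces of $\tilde g$ have total dimension two; running through the few possibilities for them — a conjugate pair $\{\mu,\mu^{-1}\}$ with $\mu\neq\pm1$ (two one-dimensional isotropic eigenspaces, at most two extra points); or an eigenvalue $\pm1$, where a two-dimensional $(-1)$-eigenspace, if present, must equal $E_1^{\perp}$ and be nondegenerate (its radical would be $E_1\cap E_{-1}=0$, yet it would also be the radical of $E_1$, contradicting that $E_1$ is not totally isotropic), contributing at most two points, a one-dimensional eigenspace contributing at most one point, and unipotent Jordan blocks contributing no new fixed point — one finds $\mathrm{Fix}(g)\cap Q$ has at most four points in every subcase.

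I expect the bookkeeping in this last case to be the only real obstacle: the statement forbids a configuration ``one line plus some stray points'', and one must see this never occurs. The resolution is the rigidity of $\mathrm O_4$: a two-dimensional eigenspace lying on $Q$ forces $\tilde g$ to be unipotent (so there are no other eigenvalues, hence no stray points), while two-dimensional $1$- and $(-1)$-eigenspaces can coexist only as a nondegenerate orthogonal decomposition $\mathsf K^4=E_1\perp E_{-1}$, which contributes points only. A more conceptual alternative I would also mention is that over $\mathsf K$ the split quadric $Q$ is isomorphic to $\mathbb P^1\times\mathbb P^1$ and $\mathrm{PSO}_4(\mathsf K)$ embeds into $\mathrm{PGL}_2(\mathsf K)\times\mathrm{PGL}_2(\mathsf K)$ acting by the product action preserving both rulings; then $\mathrm{Fix}(g)=\mathrm{Fix}(g_1)\times\mathrm{Fix}(g_2)$, and since each $\mathrm{Fix}(g_i)$ is either all of $\mathbb P^1$ (when $g_i=\mathrm{id}$) or at most two points, and $g_1,g_2$ are not both trivial, the trichotomy follows at once.
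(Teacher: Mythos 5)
Your primary argument --- lifting $g$ to $\tilde g\in\mathrm{SO}_4(\mathsf{K})$ and classifying the fixed locus via the dimensions and isotropy of the eigenspaces of $\tilde g$ --- is correct, and it is a genuinely different route from the paper's. The paper changes coordinates to put $Q$ in the form $xz-yw=0$, identifies $Q$ with the Segre image of $\mathbb{P}^1(\mathsf{K})\times\mathbb{P}^1(\mathsf{K})$, notes that $g\in\mathrm{PSO}_4(\mathsf{K})$ preserves both rulings and therefore induces a pair $(g_1,g_2)\in\mathrm{PGL}_2(\mathsf{K})\times\mathrm{PGL}_2(\mathsf{K})$, and concludes from $\mathrm{Fix}(g)\cong\mathrm{Fix}(g_1)\times\mathrm{Fix}(g_2)$ (each factor being all of $\mathbb{P}^1$ or at most two points, with not both $g_i$ trivial). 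That is exactly the ``more conceptual alternative'' you sketch in your final sentence, so you have in effect rediscovered the paper's proof as your secondary option; the paper's version is shorter and more transparent. Your eigenspace route does have the merit of being self-contained (no appeal to the exceptional isogeny $\mathrm{SO}_4\sim\mathrm{SL}_2\times\mathrm{SL}_2$) and reuses Lemma~\ref{lem: 3eigenspace}, but the bookkeeping in the last case is delicate: the parenthetical justification that a two-dimensional $E_{-1}$ is nondegenerate is garbled as written --- the clean argument is simply that $E_1\perp E_{-1}$ and $E_1\oplus E_{-1}=\mathsf{K}^4$ together with nondegeneracy of the ambient form force both restrictions to be nondegenerate, and one should also dispose of the degenerate-but-not-totally-isotropic possibility for $E_1$, which your phrasing does not explicitly address (it causes no harm, since then $E_{-1}=E_1^\perp$ would have the same radical as $E_1$, forcing $\dim E_{-1}\le1$). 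Likewise the ``short computation'' showing $\dim E_{\lambda^{-1}}=2$ should be spelled out via the nondegenerate $\tilde g$-equivariant pairing $E_\lambda\times(\mathsf{K}^4/E_\lambda)\to\mathsf{K}$, which kills any nilpotent part on the $\lambda^{-1}$ side. None of these are real gaps, but they confirm that the Segre argument is the cleaner of the two.
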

\begin{proof}
Let $i$ be a square root of $-1$. By a linear transformation, setting
\[
x_1:=x+z, \quad x_2:= ix-iz, \quad x_3:=w-y, \quad x_4:=iw+iy,
\]
we may transfer $Q$ to be defined by $xz-yw=0$.

Consider the Segre embedding of $\mathbb{P}^1(\mathsf{K}) \times \mathbb{P}^1(\mathsf{K})$ into $\mathbb{P}^3(\mathsf{K})$, with image $Q$ given by
\[
([x:y],[w:z])\mapsto [xw:xz:yw:yz]. 
\]
The action of $g$ on $Q$ induces an automorphism $t_g$ of $\mathbb{P}^1(\mathsf{K}) \times \mathbb{P}^1(\mathsf{K})$. Since $g \in \mathrm{PSO}_4(\mathsf{K})$, we know that $t_g \in \mathrm{PGL}_2(\mathsf{K}) \times \mathrm{PGL}_2(\mathsf{K})$. Write $t_g := (g_1, g_2)$.

The set of fixed points of $t_g$ is mapped to the set of fixed points of $g$, and
\[
\mathrm{Fix}(t_g)=\left\{(a,b)\in \mathbb{P}^1(\mathsf{K})\times \mathbb{P}^1(\mathsf{K}), a\in\mathrm{Fix}(g_1),b\in\mathrm{Fix}(g_2)\right\}.
\]
Since $\mathrm{Fix}(g_1)$ either contains at most 2 points or is everything, it follows that $\mathrm{Fix}(t_g)$ contains at most 4 points or is of the form $A \times \mathbb{P}^1(\mathsf{K})$ or $\mathbb{P}^1(\mathsf{K}) \times A$, where $A$ contains one or two points. The Segre embedding maps ${a} \times \mathbb{P}^1(\mathsf{K})$ or $\mathbb{P}^1(\mathsf{K}) \times {a}$ to a line in $\mathbb{P}^3(\mathsf{K})$. Thus, we obtain the desired result.
\end{proof}

Finally, we prove the main theorem of the section.

\begin{theorem}\label{thm: quadric}
    For any $\varepsilon>0$, there is $N\in\mathbb{N}$ and $\delta>0$, such that for all $\mathsf{K}=\mathbb{F}_{p^n}$ (uniform in $p$ and $n$) or $\mathsf{K}=\CC$, the following holds for any smooth quadric surface $Q\subseteq \mathbb{P}^3(\mathsf{K})$. Suppose there are finite sets $X\subseteq Q$ and $S\subseteq \mathbb{P}^3(\mathsf{K})\setminus Q$ with 
    \begin{enumerate}
        \item 
        $1\ll |X|=|S|$;
        \item 
        $|X|< p^{1/N}$ if $\operatorname{char}(\mathsf{K})=p>0$;
        \item 
        $\max\{|X\cap \ell|, |S \cap\ell|\}\leq |X|^{1-\varepsilon}$ for any projective line $\ell\subseteq  \mathbb{P}^3(K)$.
    \end{enumerate}
    Then, \[|\{(x_1,x_2,x_3)\in X^2\times S: x_1,x_2,x_3 \text{ distinct and collinear}\}|\leq |X|^{2-\delta}.\]
\end{theorem}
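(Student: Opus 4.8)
The plan is to follow the pattern of the proof of Theorem~\ref{thm: three plane}: reinterpret the collinear triples as incidences for a group action on $Q$, massage them into a configuration to which Theorem~\ref{thm: main} applies, and then contradict that theorem using a lower bound on the incidence count. First I would put everything in a convenient normal form. Replacing $\mathsf{K}$ by a quadratic extension $\mathsf{L}$ changes none of the three hypotheses (a line over $\mathsf{L}$ meeting $X$ or $S$ in $\ge 2$ points is already $\mathsf{K}$-rational, and both the characteristic and the bound $|X|<p^{1/N}$ are unaffected), so I may assume $\mathsf{L}$ contains a square root of $-1$ and, after a projective transformation, that $Q=\{\sum_{i=1}^4 x_i^2=0\}$. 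By Lemma~\ref{lem: PO4} the maps $\gamma_x$ for $x\in\PP^3(\mathsf{L})\setminus Q$ generate a subgroup of $\mathrm{PO}_4(\mathsf{L})$, and $x\mapsto\gamma_x$ is injective (the $(-1)$-eigenspace of $\tilde\gamma_x$ is $\langle v_x\rangle$). A distinct collinear triple $(x_1,x_2,x_3)\in X^2\times S$ is exactly a pair $x_1\ne x_2$ in $X$ with $\gamma_{x_3}(x_1)=x_2$; writing $d(y):=|\{(x_1,x_2)\in X^2: x_1\ne x_2,\ \gamma_y(x_1)=x_2\}|$, the assumption towards contradiction gives $\sum_{y\in S}d(y)\ge|X|^{2-\delta}$, and a popularity argument extracts $S'\subseteq S$ with $|S'|\ge\tfrac12|X|^{1-\delta}$ and $\sum_{y\in S'}d(y)\ge\tfrac12|X|^{2-\delta}$.

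The key device is to move from $\mathrm{PO}_4$ into $\mathrm{PSO}_4$. Fix any $x_0\in\PP^3(\mathsf{L})\setminus Q$ and set $X':=X\cup\gamma_{x_0}(X)$, $S'':=\{\gamma_{x_0}\gamma_y: y\in S'\}\subseteq\mathrm{PSO}_4(\mathsf{L})\le\mathrm{PGL}_4(\mathsf{L})$, and $G:=\langle S''\rangle$; then $|X|\le|X'|\le 2|X|$ and $|S''|=|S'|$. The identity $\gamma_{x_3}(x_1)=x_2\Leftrightarrow(\gamma_{x_0}\gamma_{x_3})(x_1)=\gamma_{x_0}(x_2)$ injects the $S'$-part of our triples into $\{(x,y,g)\in X'\times X'\times S'': x=gy\}$, so this incidence set has size $\ge\tfrac12|X|^{2-\delta}$. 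Now I verify the hypotheses of Theorem~\ref{thm: main} with $n=4$, $k=3$, $r=2$, and $\varepsilon_2,\varepsilon_3$ of size a fixed fraction of $\varepsilon$. Condition (1) is immediate since $|S''|=|S'|\ge\tfrac12|X|^{1-\delta}\ge|X'|^{1/2}$ for $\delta<\tfrac12$ and $|X|\gg1$. For condition (3): under $Q\cong\PP^1\times\PP^1$ the group $\mathrm{PSO}_4$ acts factorwise through $\mathrm{PGL}_2\times\mathrm{PGL}_2$ (it preserves both rulings), so with $x_i=(a_i,b_i)$ one has $\mathrm{Stab}_{\mathrm{PSO}_4}(x_1,x_2,x_3)=\mathrm{Stab}_{\mathrm{PGL}_2}(a_1,a_2,a_3)\times\mathrm{Stab}_{\mathrm{PGL}_2}(b_1,b_2,b_3)$ (equivalently one may argue via Lemma~\ref{lem: (5)quadric}); hence a triple with non-trivial $G$-stabilizer either has a repeated coordinate or has two coordinates on a common ruling line of $Q$, and the number of such triples in $X'^3$ is at most $3|X'|^2+3|X'|\sum_{\ell\ \mathrm{ruling}}|X'\cap\ell|^2\ll|X'|^{3-\varepsilon/2}$, using $\max_\ell|X'\cap\ell|\le 2|X|^{1-\varepsilon}$ and $\sum_{\ell\ \mathrm{ruling}\ i}|X'\cap\ell|=|X'|$.

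For condition (2): given $H\le G$ and a normal subgroup $D\subseteq (S''^{-1}S'')^{N_0}$ with $H/D$ nilpotent of step $\le n-1=3$, in positive characteristic $|D|\le|S''|^{2N_0}\le|X|^{2N_0}<p$ after shrinking $p^{1/N}$ in the statement, so Lemma~\ref{lem: largeAbelian} (and Lemma~\ref{lem: largeAbelianchar0} when $\mathsf{L}=\CC$) provides an abelian $\Gamma\le H$ with $[H:\Gamma]\le C$; since $|S''\cap gH|=|\hat S'\cap(\gamma_{x_0}g)H|$ for $\hat S':=\{\gamma_y:y\in S'\}$, and $|S'\cap\ell|\le|X|^{1-\varepsilon}\le|S'|^{1-\varepsilon/2}$ (from $|S'|\gg|X|^{1-\delta}$), Lemma~\ref{lem: abelian intersection} applied to $\hat S'$ gives $|S''\cap gH|\le C|S'|^{1-\varepsilon/4}\le|S''|^{1-\varepsilon_2}$; the case $H=G$ cannot occur, since it would force $|S''|=|\hat S'|\le C|S'|^{1-\varepsilon/4}$. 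Theorem~\ref{thm: main} then yields $\delta'=\delta'(\varepsilon)>0$ with $|\{(x,y,g)\in X'\times X'\times S'': x=gy\}|<|X'|^{1-\delta'}|S''|\le 2|X|^{2-\delta'}$, which contradicts the lower bound $\tfrac12|X|^{2-\delta}$ as soon as $\delta<\delta'$ and $|X|\gg1$. Choosing $\delta:=\delta'/2$ and $N$ large enough for the size reductions above completes the argument.

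The main obstacle is the stabilizer condition (3). Working directly with $G=\langle\{\gamma_x:x\in S'\}\rangle\le\mathrm{PO}_4$ fails, because an involution of $\mathrm{PO}_4$ outside $\mathrm{PSO}_4$ is some $\gamma_x$, whose fixed locus on $Q$ is a smooth conic (the plane section $Q\cap P_x$), a generic triple of points of $Q$ does lie on such a conic, and one cannot control which of these conics meet $X$ heavily since only line–concentration is hypothesized. Composing with a single fixed $\gamma_{x_0}$ to land inside $\mathrm{PSO}_4\cong\mathrm{PGL}_2\times\mathrm{PGL}_2$ is exactly what rescues the count — there the rigidity of $\mathrm{PGL}_2$ forces any bad triple to have two points on a common ruling line — while the observation $|S''\cap gH|=|\hat S'\cap g'H|$ lets the non-concentration estimates of Lemmas~\ref{lem: abelian intersection} and \ref{lem: (3)quadric}, which are phrased for the primitive involutions $\gamma_x$, still control the composed set $S''$. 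Everything else — tracking the $\varepsilon$'s, choosing $\delta$ beneath the power saving of Theorem~\ref{thm: main}, and the routine reductions — is bookkeeping.
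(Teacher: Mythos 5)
Your proposal is correct and follows the same overall strategy as the paper: reduce to $Q=\{\sum x_i^2=0\}$ by passing to a quadratic extension, use Lemma~\ref{lem: PO4} to land in $\mathrm{PO}_4$, multiply by a fixed involution $\gamma_{x_0}$ (the paper's $h_0$) to move into $\mathrm{PSO}_4$, and then invoke Theorem~\ref{thm: main} after verifying its three hypotheses via Lemmas~\ref{lem: largeAbelianchar0}, \ref{lem: largeAbelian}, \ref{lem: abelian intersection}, \ref{lem: (3)quadric}. The one genuine difference is your choice of $k=3$ for the stabilizer condition (3). The paper takes $k=5$: it cites Lemma~\ref{lem: (5)quadric} to see that the fixed locus of a nonidentity element of $\mathrm{PSO}_4$ is at most four points or a union of at most two lines, and then counts $5$-tuples that are not in general position. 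You instead pass through $\mathrm{PSO}_4\subseteq\mathrm{PGL}_2\times\mathrm{PGL}_2$ (which is really the same underlying fact that Lemma~\ref{lem: (5)quadric} is extracted from) and use that $\mathrm{PGL}_2$ is sharply $3$-transitive, so already the stabilizer of a triple is trivial unless two coordinates coincide in one of the two ruling projections; this makes $k=3$ suffice and the bad-set bound $\ll|X'|\sum_\ell|X'\cap\ell|^2\ll|X|^{3-\varepsilon}$ is slightly cleaner than the paper's $5$-tuple count. (A small pedantic point: equality $\mathrm{Stab}_{\mathrm{PSO}_4}=\mathrm{Stab}_{\mathrm{PGL}_2}\times\mathrm{Stab}_{\mathrm{PGL}_2}$ may fail if $\mathrm{PSO}_4$ is a proper subgroup of $\mathrm{PGL}_2\times\mathrm{PGL}_2$ over the ground field, but you only need the containment $\subseteq$, which always holds.) Your preliminary popularity step extracting $S'$ and the contradiction framing are harmless but unnecessary: the paper applies Theorem~\ref{thm: main} directly to $\Gamma_S'=h_0\{\gamma_s:s\in S\}$ and reads off the bound, and since $|S|=|X|$ no density refinement is needed. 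Everything else — the field extension, the $X'=X\cup\gamma_{x_0}(X)$ trick, the reduction of $|S''\cap gH|$ to $|\hat S'\cap g'H|$, the exclusion of $H=G$, and the bookkeeping on $\varepsilon$ and $N$ — matches the paper's proof.
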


\begin{proof}
Suppose we are given $\varepsilon > 0$ and sets $X \subseteq Q$, $S \subseteq \mathbb{P}^3(\mathsf{K}) \setminus Q$ as in the theorem. 
We will first reduce to the case where $Q$ is defined by $\sum_{1\leq i\leq 4}x_i^2=0$.

Note that we may use elements in $A\in\mathrm{PGL}_4(\mathsf{K})$ freely to map $(Q,X,S)$ to $(AQ,AX,AS)$. Indeed, the map is bijective and linear, the triple $(AQ,AX,AS)$ satisfies all the conditions in the theorem. If $\mathsf{K}=\CC$ we can certainly make $Q$ be defined by $\sum{1 \leq i \leq 4} x_i^2 = 0$ after applying some projective linear transformation.
 If $\mathsf{K}$ is a finite field, then by \cite[Theorem 6.9]{Jacobson}, over a finite field $\mathsf{K}$ of characteristic not equal to 2, after some projective linear transformation, we may write $Q$ as $x_1^2+x_2^2+x^2_3+dx_4^2=0$, where $d\in \mathsf{K}^*$. Consider the finite field $\mathsf{K}(\sqrt{d})$. Since $X,S$ remain finite subsets of $Q(\mathsf{K}(\sqrt{d},\sqrt{-1}))$ and $\mathbb{P}^3(\mathsf{K}(\sqrt{d},\sqrt{-1}))$, respectively, and the collinearity property is not changed, we may assume $\mathsf{K}=\mathsf{K}(\sqrt{d},\sqrt{-1})$. Then, after normalises the basis to be of length 1, we may assume $Q$ is defined by $\sum_{1\leq i\leq 4}x_i^2=0$ and that $\mathsf{K}$ contains square roots of $-1$.

Now we apply Lemma~\ref{lem: PO4}, and get a subset $\Gamma_S:=\{\gamma_s:s\in S\}$ of $\mathrm{PO}_4(\mathsf{K})\setminus \mathrm{PSO}_4(\mathsf{K})$ with $|\Gamma_S|=|S|$. Take some element $h_0\in\mathrm{PO}_4(\mathsf{K})\setminus \mathrm{PSO}_4(\mathsf{K})$, then $h_0\Gamma_S\leq \mathrm{PSO}_4(\mathsf{K})$. Let $X':=X\cup h_0(X)$ and $\Gamma_S':=h_0\Gamma_S$. Observe that
\begin{align*}
    &|\{(x_1,x_2,x_3)\in X\times X\times S: x_1,x_2,x_3 \text{ distinct and collinear}\}|\\
    \leq & |\{(x,y,\gamma_s)\in X\times X\times \Gamma_S: \gamma_s(x)=y\}|\\
    = &|\{(x,y,\gamma_s)\in X\times X\times \Gamma_S: h_0\gamma_s(x)=h_0(y)\}|\\
    \leq & |\{(x,y,g)\in X'\times X'\times \Gamma_S': g(x)=y\}|.
\end{align*}
It suffices to show that there are constants $N,C$ and $\delta>0$ such that \[|\{(x,y,g)\in (X')^2\times \Gamma_S': g(x)=y|\leq |X|^{2-\delta},\] whenever $\max\{|X\cap \ell|, |S \cap\ell|\}\leq |X|^{1-\varepsilon}$ for any projective line $\ell\subseteq  \mathbb{P}^3(\mathsf{K})$ and $C\leq|X|< p^{1/N}$ in the case that $\operatorname{char}(\mathsf{K})=p>0$. 

Let $G:=\langle \Gamma_S'\rangle$.  Let $n=4, r=2, k=5,  \varepsilon_2=\varepsilon/4, \varepsilon_3=\varepsilon$. Let $N_0$ and $\delta$ be given by Theorem~\ref{thm: main}. Let $N:=N_0+1$, then $|X|\leq p^{1/(N_0+1)}$ %and $|X|\leq p^{1/6}$ 
when $\operatorname{char}(\mathsf{K})=p>0$. We claim that condition (1)-(3) of Theorem~\ref{thm: main} holds for $G,\Gamma_S',X'$ with the chosen $n,r,k,\varepsilon_i$ and $N_0$. Then by the theorem, there is $\delta>0$ such that \[|\{(x,y,g)\in (X')^2\times \Gamma_S': g(x)=y\}|\leq |X'|^{1-\delta}|S'|\leq 2|X|^{2-\delta}\] and we are done. Now we are verifying the four conditions required in Theorem~\ref{thm: main}.

% We first verify condition (1). Suppose $G$ is finite. If $\operatorname{char}(\mathsf{K})=0$, then by Jordan's theorem $G$ has an abelian subgroup $\Gamma$ with $|G:\Gamma|\leq c$ for some absolute constant $c$. This is a direct contradiction to Lemma~\ref{lem: abelian intersection}.
% Suppose $\operatorname{char}(\mathsf{K})=p>0$, we claim that $|G|\geq p/4$. Indeed, let $\pi:\mathrm{SO}_4(\mathsf{K})\to \mathrm{PSO}_4(\mathsf{K})$ be the projection map and $\tilde{G}:=\pi^{-1}(G)$. Then  $|\tilde{G}|\leq 4|G|$. Then by the Larsen-Pink Theorem (Fact~\ref{fact: LarsenPink}), there is an absolute constant $C_0$ such that if $|\tilde{G}|<p$, then $\tilde{G}$ has an abelian subgroup $H$ of index $\leq C_0$. Thus $|h_0\Gamma_S\cap g\pi(H)|\geq |\Gamma_S|/C_0$ for some $g$, namely $|\Gamma_S\cap g'\pi(H)|\geq |S|/C_0$ for $g':=h_0^{-1}g$, contradicting Lemma~\ref{lem: abelian intersection} provided $|S|$ is large enough (which only depends on $C_0$). Therefore, $|\tilde{G}|\geq p$ and $|G|\geq p/4$. Thus condition (1) is satisfied as $|X|\leq p^{1/6}<|G|^{1/5-1/10}$. 

Condition (1) can also be easily verified with $r=2$. Next we verify the escape from nilpotent subgroups condition (2). 

Suppose $\operatorname{char}(\mathsf{K})=0$ and $H\leq G$ is nilpotent, then $H$ has a abelian subgroup $H_0$ of index bounded by some absolute constant $c$ by Lemma~\ref{lem: largeAbelianchar0}. Thus, $|\Gamma_S\cap gH_0|<|S|^{1-\varepsilon/2}$ for any $g\in\mathrm{PO}_4(\mathsf{K})$ by Lemma~\ref{lem: abelian intersection}. Therefore,
\[
|\Gamma_S\cap gH|<c|S|^{1-\frac{\varepsilon}{2}}<|S|^{1-\frac{\varepsilon}{4}}
\]
when $|S|\gg_{\varepsilon,c} 1$. In particular, $|h_0\Gamma_S\cap g'H|<|S|^{1-\varepsilon/4}$ for any $g'\in\mathrm{PSO}_4(\mathsf{K})$. 

Suppose $\operatorname{char}(\mathsf{K})=p>0$, and $H\leq G\leq \mathrm{PSO}_4(\mathsf{K})$, $D\subseteq (\Gamma_S')^{N_0}$ such that $D\trianglelefteq H$ and $H/D$ nilpotent. Then $|D|\leq |S|^{N_0}\leq p/|S|$ by assumption.  By Lemma~\ref{lem: (3)quadric}, $|\Gamma_S\cap gH|<|S|^{1-\varepsilon/4}$ for any $g\in\mathrm{PO}_4(\mathsf{K})$. Thus, $|h_0\Gamma_S\cap g'H|<|S|^{1-\varepsilon/4}$ for any $g'\in\mathrm{PSO}_4(\mathsf{K})$. 

Combined the two cases together, condition (2) is verified. 

For condition (3), let $(x_i)_{1\leq i\leq 5}$ be five distinct points in $Q$ with no three of them are collinear. Then by Lemma \ref{lem: (5)quadric}, only the identity element in $\mathrm{PSO}_4(\mathsf{K})$ can fix all of them. Let \[
S_1:=\{(x_1,\ldots,x_5)\in (X')^5: x_i=x_j\text{ for some }i\neq j\}
\]
and 
\[
S_2:=\{(x_1,\ldots,x_5)\in (X')^5: x_i\text{ is in the line spanned by } x_j, x_k\text{ for some } x_i\neq x_j, i>j>k\}.
\]
Then \[\{(x_1,\ldots,x_5)\in (X')^5, \mathrm{Stab}_{G}(x_1,\ldots,x_5)\neq \mathrm{id}_G\}\subseteq S_1\cup S_2.\] Clearly, $|S_1|\leq C_1|X|^4$ for some absolute constant $C_1$. Using the assumption that $|X\cap\ell|\leq |X|^{1-\varepsilon}$ for any projective line $\ell$, we conclude that $|X'\cap \ell|=|(X\cup h_0X)\cap \ell|\leq 2|X|^{1-\varepsilon}$. Thus, $|S_2|\leq C_2|X|^{5-\varepsilon}$ for some absolute constant $C_2$. Hence condition (3) is verified.
\end{proof}

\appendix
\section{A Balog--Szemer\'edi--Gowers theorem for measures}

In this section, we will prove Theorem~\ref{thm: BSG}. Let us first state a more precise version of this theorem.
\begin{theorem}\label{thm: BSG App}
Let $G$ be a group, and $K\geq 1$ be an integer. Suppose $\nu:G\to[0,1]$ is a finitely supported probability measure and $\|\nu*\nu\|_{L^2(G)}\geq K^{-1}\|\nu\|^2_{L^2(G)}$. Let  \[
    \nu'=\nu \e_{\frac{1}{2^8K^{2}}\|\nu\|_{L^2(G)}^2<\nu<2^4K\|\nu\|_{L^2(G)}^2}. 
    \] 
Then there is an $O(K^{O(1)})$-approximate subgroup $H$ of $G$ such that
\begin{enumerate}
    \item  $|H|\ll K^{O(1)}/\|\nu\|_{L^2(G)}^2$;
    \item  $H\subseteq (\mathrm{supp}(\nu')^{-1}\mathrm{supp}(\nu'))^3$;
    \item  $\nu(xH)\gg K^{-O(1)}$ for some $x\in G$.
\end{enumerate}
\end{theorem}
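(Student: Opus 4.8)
The plan is to follow the classical derivation of the Bourgain--Gamburd flattening lemma: dyadic pigeonholing to replace $\nu$ by an essentially uniform measure on a set, then a noncommutative Balog--Szemer\'edi--Gowers argument on that set, and finally a nonabelian Freiman-type structure theorem to produce the approximate subgroup.

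\textbf{Step 1 (reduction to a set).} After treating the degenerate regime $\|\nu\|_{L^2(G)}^2\gg K^{-O(1)}$ by hand (there $\|\nu\|_{L^\infty}\geq\|\nu\|_{L^2}^2\gg K^{-O(1)}$, so $\nu$ has a heavy atom and one takes $H=\{\mathrm{id}\}$), the truncation window $\big(\tfrac{1}{2^8K^2}\|\nu\|_{L^2}^2,\,2^4K\|\nu\|_{L^2}^2\big)$ defining $\nu'$ is meaningful. Set $A:=\mathrm{supp}(\nu')$; since every $x\in A$ has $\nu(x)>\tfrac{1}{2^8K^2}\|\nu\|_{L^2}^2$ and $\sum_x\nu(x)=1$, we get $|A|\leq 2^8K^2\|\nu\|_{L^2}^{-2}$ for free. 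The content is that $\nu'$ inherits the non-flattening: decomposing $\nu=\nu'+\nu_{\mathrm{lo}}+\nu_{\mathrm{hi}}$ into the window and its two tails and bounding the cross terms of $\nu*\nu$ by Young's and H\"older's inequalities --- using $\|\nu_{\mathrm{lo}}\|_{L^2}\leq 2^{-4}K^{-1}\|\nu\|_{L^2}$ and that $\nu_{\mathrm{hi}}$ sits on few atoms so that its convolution mass is pinned by the threshold $2^4K\|\nu\|_{L^2}^2$ --- forces $\|\nu'*\nu'\|_{L^2}\gg K^{-O(1)}\|\nu*\nu\|_{L^2}\gg K^{-O(1)}\|\nu\|_{L^2}$ and $\|\nu'\|_{L^1}\gg K^{-O(1)}$. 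Because $\nu'\leq 2^4K\|\nu\|_{L^2}^2\,\e_A$, the first bound together with $|A|\ll K^{O(1)}\|\nu\|_{L^2}^{-2}$ yields the multiplicative-energy estimate $E(A):=|\{(a,b,c,d)\in A^4:ab=cd\}|\gg K^{-O(1)}|A|^3$.

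\textbf{Steps 2--3 (Balog--Szemer\'edi--Gowers and structure).} From $E(A)\gg K^{-O(1)}|A|^3$, the nonabelian Balog--Szemer\'edi--Gowers argument (counting pairs of length-two paths in the Cayley-type graph on $A$) yields $A'\subseteq A$ with $|A'|\gg K^{-O(1)}|A|$ and $|((A')^{-1}A')^{j}|\leq K^{O(1)}|A'|$ for $j\leq 3$. Plugging $A'$ into the nonabelian ``small tripling $\Rightarrow$ approximate group'' theorem (via iterated Ruzsa triangle and covering inequalities) produces a $K^{O(1)}$-approximate group $H$ with $|H|\ll K^{O(1)}|A'|\ll K^{O(1)}\|\nu\|_{L^2}^{-2}$, with $H\subseteq((A')^{-1}A')^{3}\subseteq(\mathrm{supp}(\nu')^{-1}\mathrm{supp}(\nu'))^{3}$, and with $A$ covered by $K^{O(1)}$ translates of $H$; this is conclusions (1) and (2). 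For (3): $A$ meets some translate $xH$ in $\gg K^{-O(1)}|A|$ points, and since $\nu'\gg K^{-O(1)}\|\nu\|_{L^2}^2$ on $A$, $|A|\gg K^{-O(1)}\|\nu\|_{L^2}^{-2}$ and $\nu\geq\nu'$, one gets $\nu(xH)\geq\nu'(xH)\gg K^{-O(1)}$.

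\textbf{Main obstacle.} The genuinely delicate point is executing Steps 2--3 in a nonabelian group while landing $H$ in only the \emph{third} power $(\mathrm{supp}(\nu')^{-1}\mathrm{supp}(\nu'))^{3}$; this requires the sharp forms of the noncommutative Balog--Szemer\'edi--Gowers lemma and of the Freiman-type theorem, together with careful control of the order of the Ruzsa-type inequalities, which are far more rigid than in the abelian case. A secondary but fiddly issue is Step 1: the window $\big(\tfrac{1}{2^8K^2}\|\nu\|_{L^2}^2,\,2^4K\|\nu\|_{L^2}^2\big)$ is calibrated precisely so that, once the degenerate regime is excluded, both tail contributions to $\|\nu*\nu\|_{L^2}$ are provably negligible, and obtaining the lower bounds on $\|\nu'*\nu'\|_{L^2}$ and $\|\nu'\|_{L^1}$ simultaneously is where most of the bookkeeping lives.
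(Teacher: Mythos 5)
Your proposal is correct and follows essentially the same route as the paper: the same dyadic truncation of $\nu$ at the thresholds $2^{-8}K^{-2}\|\nu\|_{L^2}^2$ and $2^4K\|\nu\|_{L^2}^2$, the same Young-inequality bookkeeping to show the middle piece inherits the non-flattening and has $|A|\asymp_{K}\|\nu\|_{L^2}^{-2}$ with $\mu_A\asymp_{K}\nu$ pointwise on $A$, followed by the set version of Balog--Szemer\'edi--Gowers and the nonabelian small-doubling/Freiman-type theorem to obtain $H\subseteq (A^{-1}A)^3$ and a translate $xH$ carrying a $K^{-O(1)}$ fraction of $\nu$. The only cosmetic differences are that the paper does not single out the degenerate regime $\|\nu\|_{L^2}^2\gg K^{-O(1)}$ (the argument handles it uniformly), and it invokes BSG in its small-doubling form and then a separate doubling-to-approximate-group lemma rather than a combined BSG-with-tripling statement.
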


The main ingredient is the usual Balog--Szemer\'edi--Gowers theorem for sets~\cite[Theorem 5.2]{Tao}. 
\begin{fact}\label{fact: BSG for sets}
    Let $A,B\subseteq G$ be finite sets with $\|\e_A*\e_B\|^2_{L^2(G)}\geq K^{-1}|A|^{3/2}|B|^{3/2}$. Then there
are sets $A'\subseteq A$ and $B'\subseteq B$ with $|A'|\geq |A|/12K$ and $|B'|\geq |B|/12K$ such that $|A'B'|\leq 2^{20} K^8 |A|^{1/2}|B|^{1/2}$. 
\end{fact}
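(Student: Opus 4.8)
The plan is to run the classical graph-theoretic proof of the Balog--Szemer\'edi--Gowers theorem, in the non-abelian setting, in three moves: an energy-concentration step, a bipartite-graph step, and a path-counting step.

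\textbf{Setup and energy concentration.} Write $r:=\e_A*\e_B$, so that $r(x)=|\{(a,b)\in A\times B:ab=x\}|$, $\sum_x r(x)=|A||B|$, $\max_x r(x)\leq |A|^{1/2}|B|^{1/2}$, and the hypothesis reads $\sum_x r(x)^2=\|\e_A*\e_B\|_{L^2(G)}^2\geq K^{-1}|A|^{3/2}|B|^{3/2}$. Set $\theta:=\tfrac1{2K}|A|^{1/2}|B|^{1/2}$ and let $P:=\{x\in G: r(x)>\theta\}$ be the set of popular products. Since $\sum_{(a,b)\in A\times B}r(ab)=\sum_x r(x)^2$ while the pairs with $r(ab)\leq\theta$ contribute at most $\theta|A||B|\leq\tfrac12\sum_x r(x)^2$, the bipartite graph
\[
\Gamma:=\{(a,b)\in A\times B:\ ab\in P\}
\]
has $|\Gamma|\geq\tfrac12\sum_x r(x)^2/\max_x r(x)\geq |A||B|/(2K)$ edges; and since every $x\in P$ has $r(x)>\theta$ with $\sum_{x\in P}r(x)\leq|A||B|$, we get $|P|<2K|A|^{1/2}|B|^{1/2}$.

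\textbf{The bipartite-graph lemma (the crux).} From $\Gamma$ with at least $|A||B|/(2K)$ edges I would extract $A'\subseteq A$ and $B'\subseteq B$ with $|A'|\geq |A|/(12K)$ and $|B'|\geq |B|/(12K)$ such that \emph{every} pair $(a,b')\in A'\times B'$ is joined by at least $M:=|A||B|/(2^{17}K^{5})$ paths of length $3$ in $\Gamma$, i.e.\ there are at least $M$ pairs $(b,a_2)\in B\times A$ with $(a,b),(a_2,b),(a_2,b')\in\Gamma$. This is the standard ``BSG graph lemma,'' which I would prove by dependent random choice: first delete from $A$ and from $B$ all vertices whose $\Gamma$-degree is below a small constant multiple of $|B|/K$, respectively $|A|/K$, losing only a constant fraction of the edges; then pick a random vertex on one side, take its $\Gamma$-neighbourhood as a candidate set, and bound the expected number of pairs in that neighbourhood with small codegree, so that a good choice yields a large set in which most pairs have codegree $\gtrsim |B|/K^{2}$; a short pruning step removes the few vertices lying in many ``bad'' pairs to upgrade ``most pairs'' to ``all pairs,'' and a symmetric run produces $B'$; finally one checks, by the degree-trimming bounds, that every cross pair $(a,b')$ indeed has $\gtrsim |A||B|/K^{5}$ length-three paths. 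Pushing the constants through these steps so as to land exactly on the thresholds $12K$ and $2^{17}K^{5}$ is the fiddly part, and this constant-chase is where I expect the main difficulty; the qualitative statement is routine.

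\textbf{Path counting.} A length-three path $a-b-a_2-b'$ in $\Gamma$ records popular products $p_1:=ab$, $p_2:=a_2b$, $p_3:=a_2b'$ in $P$, and in the group $G$ one has
\[
ab'=(ab)(a_2b)^{-1}(a_2b')=p_1p_2^{-1}p_3\in P\,P^{-1}P .
\]
With $a$ fixed, the pair $(p_1,p_2)$ recovers $b$ and then $a_2$, so distinct paths out of a fixed $(a,b')$ yield distinct triples $(p_1,p_2,p_3)\in P^{\times 3}$; hence each $v\in A'B'$ has at least $M$ representations $v=p_1p_2^{-1}p_3$ with $p_i\in P$. Since a given triple $(p_1,p_2,p_3)$ determines the single element $p_1p_2^{-1}p_3$, the representation sets of distinct elements of $A'B'$ are disjoint subsets of $P^{\times 3}$, so
\[
|A'B'|\cdot M\ \leq\ |P|^{3}\ <\ (2K)^{3}|A|^{3/2}|B|^{3/2},
\]
which rearranges to $|A'B'|<2^{20}K^{8}|A|^{1/2}|B|^{1/2}$. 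The lower bounds on $|A'|$ and $|B'|$ come directly from the graph lemma, so this completes the argument.
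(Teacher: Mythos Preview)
The paper does not prove this statement at all: it is recorded as a \emph{Fact} with a citation to \cite[Theorem 5.2]{Tao} and is used as a black box in the proof of Theorem~\ref{thm: BSG App}. So there is no ``paper's own proof'' to compare against beyond the reference.

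Your sketch is the standard graph-theoretic proof of the non-abelian Balog--Szemer\'edi--Gowers theorem, and it is correct. The energy-concentration step and the path-counting step are carried out cleanly; in particular the identity $ab'=(ab)(a_2b)^{-1}(a_2b')$ is right in a non-abelian group, the injectivity argument (fixed $a$, then $(p_1,p_2)$ recovers $(b,a_2)$) is valid, and the final arithmetic $(2K)^3\cdot 2^{17}K^5=2^{20}K^8$ lands exactly on the stated bound. The only part you have not fully written out is the bipartite graph lemma with the specific thresholds $|A'|\geq |A|/(12K)$, $|B'|\geq |B|/(12K)$, and $M=|A||B|/(2^{17}K^5)$; this is indeed the standard dependent-random-choice argument you describe, and the constants do work out (this is the version proved in Tao's paper, which is why the authors simply cite it). Your honest flag that the constant-chase is the fiddly part is accurate, but there is no missing idea.
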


We will also need the following result~\cite[Theorem 4.6]{Tao}, which morally states that small doubling sets are approximate groups. 
\begin{fact}\label{fact: small doubling and approx group}
    Suppose $|AB|\leq K|A|^{1/2}|B|^{1/2}$. Then there is a set $H$ satisfying
\begin{enumerate}
    \item $H$ is a $O(K^{O(1)})$-approximate group contained in $(A^{-1}A)^3$; \footnote{The fact that $H$ is contained in $(A^{-1}A)^3$ is implicit in the proof, from the proof of \cite[Proposition 4.5]{Tao}, we find $S\subseteq A^{-1}A$ has small tripling, hence by \cite[Corollary 3.11]{Tao}, $H:=(S\cup\{1\}\cup S^{-1})^3\subseteq (A^{-1}A)^3$ is an $K^{O(1)}$-approximate subgroup.}
    \item  $|H|\ll K^{O(1)}|A|^{1/2}|B|^{1/2}$;
    \item  there is a set $X$ with $|X|\ll K^{O(1)} $ such that $A\subseteq XH$ and $B\subseteq HX$. 
\end{enumerate}
\end{fact}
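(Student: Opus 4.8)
The plan is to recognize Fact~\ref{fact: small doubling and approx group} as the usual ``small doubling implies approximate group'' statement and to prove it through the noncommutative Ruzsa calculus, following \cite{Tao}. First I would reduce to comparable sizes and a one-sided doubling bound: for any $b_0\in B$ we have $Ab_0\subseteq AB$, and for any $a_0\in A$ we have $a_0B\subseteq AB$, so $\max\{|A|,|B|\}\leq |AB|\leq K|A|^{1/2}|B|^{1/2}$. Squaring gives $|A|\leq K^2|B|$ and $|B|\leq K^2|A|$, and plugging these back in yields $|AB|\leq K^2\min\{|A|,|B|\}$. Thus from here on I would work with the single doubling parameter $K_0:=K^2$ and the bounds $|AB|\leq K_0|A|$, $|AB|\leq K_0|B|$, $|A|\leq K_0|B|\leq K_0^2|A|$.

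The heart of the argument is to control the symmetric product $S:=A^{-1}A$, which is automatically symmetric and contains $\mathrm{id}$. Using the Ruzsa triangle inequality (in its noncommutative form $|UW^{-1}|\,|V|\leq |UV^{-1}|\,|VW^{-1}|$) together with the Ruzsa covering lemma, one bootstraps from the single controlled product $AB$ to the remaining mixed products $A^{-1}B$, $BB^{-1}$, $A^{-1}A$, obtaining $|A^{-1}A|\ll K_0^{O(1)}|A|$; iterating the same calculus (e.g.\ covering $A$ by few translates of $BB^{-1}$ and substituting) gives small tripling $|SSS|\ll K_0^{O(1)}|S|$. Since $S$ is symmetric, contains the identity, and has $K_0^{O(1)}$-small tripling, the standard ``tripling-set generates an approximate group'' lemma (cf.\ the footnote's use of \cite{Tao}, Corollary~3.11) shows that
\[
H:=S^{3}=(A^{-1}A)^{3}
\]
is a $K_0^{O(1)}$-approximate group, with $|H|\ll K_0^{O(1)}|S|\ll K_0^{O(1)}|A|\ll K^{O(1)}|A|^{1/2}|B|^{1/2}$ (using $|A|\leq K^2|B|$), and it is contained in $(A^{-1}A)^3$ by construction; this proves (1) and (2).

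For (3), I would apply the Ruzsa covering lemma once more: from $|AS|\ll K_0^{O(1)}|S|$ (a consequence of the product bounds above) there is $X_1$ with $|X_1|\ll K_0^{O(1)}$ and $A\subseteq X_1 H$, and applying the covering lemma in the opposite group — equivalently, to the pair $(B^{-1},S)$ after inverting and using $|B^{-1}A^{-1}|=|AB|$ — gives $X_2$ with $|X_2|\ll K_0^{O(1)}$ and $B\subseteq H X_2$. Taking $X:=X_1\cup X_2$ and translating the $K_0=K^{2}$ exponents back to $K$ completes the proof. The hard part will be the middle step: passing from the single hypothesis $|AB|\ll K|A|^{1/2}|B|^{1/2}$ to \emph{simultaneous} control of $A^{-1}A$ and its triple product. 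In an abelian group this is routine Pl\"unnecke--Ruzsa, but in a general group Pl\"unnecke's inequality fails, and one must instead chain the Ruzsa triangle inequality and covering lemma carefully by hand, exactly as developed in \cite{Tao}.
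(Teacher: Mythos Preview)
Your reduction to comparable sizes and your use of the ``small tripling $\Rightarrow$ approximate group'' lemma are both fine, and the paper itself does not give a proof here --- it quotes the result from \cite{Tao}, with the footnote sketching how $H$ is built. But there is a genuine gap in your middle step. You set $S:=A^{-1}A$ and claim that chaining the noncommutative Ruzsa triangle inequality with the covering lemma yields $|SSS|\ll K_0^{O(1)}|S|$. In a nonabelian group this is false. Take $G=H*\langle x\rangle$, the free product of a finite group $H$ of order $n$ with an infinite cyclic group, and let $A=B=H\cup\{x\}$. Then $|A^{2}|=3n<3|A|$, so $K<3$; and $A^{-1}A=H\cup Hx\cup x^{-1}H$ has size $3n$, so the bound $|A^{-1}A|\ll K^{O(1)}|A|$ does happen to hold in this example. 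However $(A^{-1}A)^{2}\supseteq (Hx)(Hx)$, and in the free product the $n^{2}$ words $h_1xh_2x$ are pairwise distinct, so $|S^{2}|/|S|\ge n/3$ is unbounded as $n\to\infty$. Thus $S=A^{-1}A$ does not have small tripling, and your proposed $H=(A^{-1}A)^{3}$ is not a $K^{O(1)}$-approximate group. Ruzsa's triangle inequality and the covering lemma by themselves cannot rescue this: control of the single product $AB$ does not, in the noncommutative setting, propagate to iterated products of $A^{-1}A$; this is precisely why Pl\"unnecke is genuinely unavailable rather than merely inconvenient.

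What Tao's proof actually does --- and what the footnote is signalling by writing ``$S\subseteq A^{-1}A$'' rather than $S=A^{-1}A$ --- is a popularity argument. From $|AB|\le K_0|A|$ one gets, by Cauchy--Schwarz, many coincidences $a_1b_1=a_2b_2$, i.e.\ many representations of elements of $A^{-1}A\cap BB^{-1}$; one then takes $S$ to be the set of \emph{popular} elements (those with $\gg |A|/K_0$ representations as $a_1^{-1}a_2$) and shows by a direct counting argument that this $S$ has $|S^{3}|\ll K_0^{O(1)}|S|$. The approximate group is $H:=(S\cup\{e\}\cup S^{-1})^{3}\subseteq (A^{-1}A)^{3}$. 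Your covering argument for part~(3) and the size estimate for~(2) go through unchanged once $S$ is chosen this way; only the construction of $S$ needs to be replaced.
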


\begin{proof}[Proof of Theorem~\ref{thm: BSG App}]
    Let $M>1$ to be determined later, and $\delta=1/M^2$ and
\[
\nu_1=\nu\e_{\nu\geq M\|\nu\|_{L^2(G)}^2}, \quad \nu_2=\nu\e_{\nu\leq \delta\|\nu\|_{L^2(G)}^2},
\]
and
\[
\nu_{\mathrm{str}}=\nu-\nu_1-\nu_2. 
\]
Observe that $\nu_1$ is small in $L^1(G)$, that is
\[
\|\nu_1\|_{L^1(G)}\leq \Big\|\nu\frac{\nu}{M\|\nu\|_{L^2(G)}^2}\Big\|_{L^1(G)}\leq \frac{1}{M}. 
\]
Also, $\nu_2$ is small in $L^2(G)$, that is
\[
\|\nu_2\|^2_{L^2(G)}\leq \delta\|\nu\|_{L^2(G)}^2\|\nu\|_{L^1(G)}=\delta\|\nu\|_{L^2(G)}^2. 
\]
This implies that $\|\nu_2\|_{L^2(G)}\leq\|\nu\|_{L^2(G)}/M$. By Young's inequality (that is $\|f*g\|_r\leq\|f\|_p\|g\|_q$ when $1+\frac1r=\frac1p+\frac1q$), for every $\nu_*=\nu_1,\nu_2,\nu_{\mathrm{str}}$ we have
\[
\|\nu_{*}*\nu_1\|_{L^2(G)}\leq \|\nu_*\|_{L^2(G)}\|\nu_1\|_{L^1(G)}\leq \frac{\|\nu\|_{L^2(G)}}{M},
\]
and
\[
\|\nu_{*}*\nu_2\|_{L^2(G)}\leq \|\nu_*\|_{L^1(G)}\|\nu_2\|_{L^2(G)}\leq \frac{\|\nu\|_{L^2(G)}}{M}.
\]
Similarly, 
\[
\|\nu_1*\nu_{*}\|_{L^2(G)}\leq \frac{\|\nu\|_{L^2(G)}}{M} \text{ and }\|\nu_2*\nu_{*}\|_{L^2(G)}\leq \frac{\|\nu\|_{L^2(G)}}{M}.
\]
From the assumption we have
\[
\|\nu*\nu\|_{L^2(G)}\geq\frac{1}{K}\|\nu\|_{L^2(G)}.
\]
Choose $M=2^4K$. Thus by the linearity of the convolution, and Minkowski's inequality,
\begin{equation}\label{eq: info about mu_A}
\|\nu_{\mathrm{str}}*\nu_{\mathrm{str}}\|_{L^2(G)}\geq\frac{1}{2K}\|\nu\|_{L^2(G)}. 
\end{equation}
Hence by Young's inequality again, 
\[
\|\nu_{\mathrm{str}}*\nu_{\mathrm{str}}\|_{L^2(G)}\leq \|\nu_{\mathrm{str}}\|_{L^2(G)}\|\nu_{\mathrm{str}}\|_{L^1(G)}\leq \|\nu_{\mathrm{str}}\|_{L^2(G)},
\]
which gives us
\[
\|\nu_{\mathrm{str}}\|_{L^2(G)}\geq\frac{1}{2K}\|\nu\|_{L^2(G)}. 
\]

Let $A=\mathrm{supp} (\nu_{\mathrm{str}})$. As
\[
\|\nu\|_{L^2(G)}\geq \|\nu_{\mathrm{str}}\|_{L^2(G)}\geq\frac{1}{2K}\|\nu\|_{L^2(G)},
\]
we have
\[\frac{1}{4K^2}\|\nu\|^2_{L^2(G)}\leq \|\nu_{\mathrm{str}}\|_{L^2(G)}^2\leq |A|M^2 \|\nu\|_{L^2(G)}^4.\]
and 
\[\|\nu\|_{L^2(G)}^2\geq \|\nu_{\mathrm{str}}\|^2_{L^2(G)}\geq |A|(1/M^4)\|\nu\|^4_{L^2(G)}.\]
Thus,
\[
\frac{1}{2^{10}K^4}\leq |A|\|\nu\|^2_{L^2(G)}\leq 2^{16}K^4. 
\]
Also note that for every $x\in A$ we have
\[
\delta\|\nu\|^2_{L^2(G)}\leq\nu(x)\leq M\|\nu\|^2_{L^2(G)}. 
\]
This means, for every $x\in A$
\[
\frac{\nu(x)}{M \|\nu\|_{L^2(G)}^2}\leq \e_A(x)\leq \frac{\nu(x)}{\delta \|\nu\|_{L^2(G)}^2}.
\]
Hence we have the following pointwise estimate uniformly for $x\in A$
\[
\frac{1}{2^{20}K^5} \nu(x)\leq\mu_A(x)\leq 2^{18}K^6 \nu(x). 
\]
Thus by \eqref{eq: info about mu_A}, we have
\begin{align*}
\|\mu_A*\mu_A\|_{L^2(G)}&\geq \frac{1}{2^{40}K^{10}}\|\nu_{\mathrm{str}}*\nu_{\mathrm{str}}\|_{L^2(G)}\\
&\geq\frac{1}{2^{41}K^{11}}\|\nu\|_{L^2(G)}\geq \frac{1}{2^{59}K^{17}}\|\mu_A\|_{L^2(G)}. 
\end{align*}
The above inequality can be rewritten as 
\[
\|\e_A*\e_A\|^2_{L^2(G)}\geq \frac{1}{2^{118}K^{34}}|A|^3\gg K^{-O(1)}|A|^3.
\]
and by Fact~\ref{fact: BSG for sets}, there are $A', B'\subseteq A$ with 
$|A'|,|B'|\gg K^{-O(1)}|A|,$
and $|A'B'|\ll K^{O(1)}|A|$. Using Fact~\ref{fact: small doubling and approx group}, there is an $O(K^{O(1)})$-approximate group $H\subseteq (A^{-1}A)^3$ with $|H|\ll K^{O(1)}|A|$ and $A'\subseteq XH$, $B'\subseteq HX$ for some finite $|X|\ll K^{O(1)}$. 

As $A'\subseteq A$, and $A'\subseteq XH$, by pigeonhole principle, there is $x\in G$ such that
\[
|A\cap xH|\gg K^{-O(1)} |A|.
\]
Therefore
\[
\nu(xH)\geq \nu_{\mathrm{str}}(xH)\gg K^{-O(1)}\mu_A(xH)\gg K^{-O(1)}
\]
as desired. 
\end{proof}

\bibliographystyle{amsalpha}
\bibliography{ref}
\end{document}